\newtheorem{thm}{Theorem}[section]
\newtheorem{lemma}[thm]{Lemma}
\newtheorem{prop}[thm]{Proposition}
\newtheorem{defn}[thm]{Definition}
\newtheorem{rmk}[thm]{Remark}
\def\res{\operatorname{res}}
\def\Dom{\operatorname{dom}}
\def\Range{\operatorname{range}}
\def\id{\operatorname{id}}
\def\hotimes{\operatorname{\hat{\otimes}}}
\def\gv{\operatorname{gv}}
\def\rank{\operatorname{rank}}
\def\Cliff{\operatorname{\mathbb{C}liff}}
\def\cliff{\operatorname{Cliff}}
\def\CCl{\operatorname{\mathbb{C}\ell}}
\def\Ad{\operatorname{Ad}}
\def\End{\operatorname{End}}
\def\tr{\operatorname{tr}}
\def\Tr{\operatorname{Tr}}
\def\FF{\operatorname{\mathcal{F}}}
\def\GG{\operatorname{\mathcal{G}}}
\def\BB{\operatorname{\mathcal{B}}}
\def\DD{\operatorname{\mathcal{D}}}
\def\KK{\operatorname{\mathcal{K}}}
\def\LL{\operatorname{\mathcal{L}}}
\def\AA{\operatorname{\mathcal{A}}}
\def\MM{\operatorname{\mathcal{M}}}
\def\DD{\operatorname{\mathcal{D}}}
\def\UU{\operatorname{\mathcal{U}}}
\def\HH{\operatorname{\mathcal{H}}}
\def\NN{\operatorname{\mathcal{N}}}
\def\EB{\operatorname{\mathbb{E}}}
\def\CB{\operatorname{\mathbb{C}}}
\def\RB{\operatorname{\mathbb{R}}}
\def\ZB{\operatorname{\mathbb{Z}}}
\def\NB{\operatorname{\mathbb{N}}}
\title{The Godbillon-Vey invariant and equivariant $KK$-theory}
\author{Lachlan MacDonald, Adam Rennie\\
\\
School of Mathematics and Applied Statistics\\
University of Wollongong\\
Northfields Ave, Wollongong, NSW, 2522}
\date{October 2019}
\begin{document}

\maketitle

\begin{abstract}
We construct a groupoid equivariant Kasparov class
for transversely oriented foliations in all codimensions. In
codimension 1 we show that the Chern character of an
associated semifinite spectral triple recovers the Connes-Moscovici 
cyclic cocycle for the Godbillon-Vey secondary characteristic class.
\end{abstract}

\section{Introduction}

In this paper we construct a semifinite spectral triple 
for codimension 1 foliations whose Chern
character is a global, non-\'{e}tale version of the cyclic cocycle, constructed by Connes and Moscovici \cite{backindgeom},
representing the Godbillon-Vey class. The construction passes through
groupoid equivariant Kasparov theory, and this initial part of the construction
works in all codimensions. 

Associated to any foliated manifold $(M,\FF)$ of 
codimension $q$ is a canonical real rank $q$ 
vector bundle $N = TM/T\FF$ called the normal bundle.  
One of the foundational results of the theory of foliated 
manifolds is \emph{Bott's vanishing theorem}, which 
states that the Pontrjagin classes $p^{i}(N)$ of the normal 
bundle $N$ must vanish for all $i>2q$ \cite{bott1}.  
This vanishing theorem guarantees the existence of 
new characteristic classes for $M$ called 
\emph{secondary characteristic classes}, which 
have been studied extensively \cite{bott2,botthaef,foliatedbundles}.  
It has been shown in particular that all such 
classes arise under the image of a characteristic map from the Gelfand-Fuchs cohomology 
of the Lie algebra of formal vector fields \cite{gelfuks} to the cohomology of $M$ \cite{bott3,botthaef}.

The most famous example of a secondary characteristic 
class is the Godbillon-Vey invariant, first discovered by Godbillon and Vey \cite{gv}, which arises in the 
context of transversely orientable foliations and can be 
constructed explicitly at the level of differential forms.  
More specifically, transverse orientability of a 
codimension $q$ foliated manifold $(M,\FF)$ amounts 
to the existence of a nonvanishing section of the top 
degree line bundle $\Lambda^{q}N^{*}$ of the conormal 
bundle $N^{*}$ over $M$.  Any identification of $N^*$ with 
a subbundle of $T^*M$, obtained say by equipping $M$ with a 
Riemannian metric, identifies such a section with a 
nonvanishing differential form $\omega\in\Omega^{q}(M)$ 
such that 
\begin{equation}
\label{omega}
    \omega(X_{1}\wedge\cdots\wedge X_{q}) = 0
\end{equation}
whenever any one of the $X_{j}$ is contained in the 
space $\Gamma(T\FF)$ of vector fields which are tangent 
to the foliation.  Since the subbundle 
$T\FF\subset TM$ is integrable, by the Frobenius 
theorem one is guaranteed the 
existence of a 1-form $\eta\in\Omega^{1}(M)$ for which
\[
    d\omega = \eta\wedge\omega.
\]
The differential form 
$\eta\wedge(d\eta)^{q}$ is closed, and its class 
$GV$ in de Rham cohomology is independent 
of the choices of $\omega$ and $\eta$.  The Godbillon-Vey invariant has  
been shown to be closely related to measure theory and dynamics: see 
\cite{cantcon,Duminy,hh,hurd} for example.

Building on work of Winkelnkemper \cite{wink} which associated to any foliated manifold $(M,\FF)$ its holonomy groupoid $\GG$, Connes \cite{folops} initiated the study of foliated manifolds as noncommutative geometries using the convolution algebra $C_{c}^{\infty}(\GG)$.  While the convolution algebra $C_{c}^{\infty}(\GG)$ associated to the full holonomy groupoid is necessary when considering leafwise phenomena \cite{cs}, when considering \emph{transverse geometry only} one can simplify matters in the following way.  Choose a $q$-dimensional submanifold $T$ of $M$ which intersects each leaf of $\FF$ at least once and which is everywhere transverse to $\FF$ (such a $T$ can be found by taking the disjoint union of local transversals in $M$ defined by a covering of foliated charts).  Then the restricted groupoid
\[
(\GG)_{T}^{T}:=\{u\in \GG:s(u),r(u)\in T\}
\]
inherits a differential topology from $\GG$ under which it is an \'{e}tale Lie groupoid \cite[Lemma 2]{crainic2}.  Importantly, the groupoids $(\GG)_{T}^{T}$ and $\GG$ are \emph{Morita equivalent} \cite[Lemma 2]{crainic2}.  They therefore have the same cyclic (co)homology \cite{crainic2, crainic1}, and have Morita equivalent $C^{*}$-algebras \cite{muhwil} so are the same as far as $K$-theory is concerned also.  In treating the transverse geometry of a foliation it has therefore become standard in the literature to use the \'{e}tale groupoid $(\GG)_{T}^{T}$ in the place of the full holonomy groupoid $\GG$ \cite{cyctrans,hopf1,goro1,diffcyc,goro2,crainic1,moscorangi, backindgeom}.

A reasonable model for any such \'{e}tale groupoid $(\GG)^{T}_{T}$ is simply the action groupoid $V\rtimes\Gamma$, where $V$ is an oriented manifold (a stand-in for the transversal $T$), and where $\Gamma$ is a discrete group of orientation-preserving diffeomorphisms of $V$ (which is a stand-in for the action of the holonomy groupoid on $T$).  It is in this setting that Connes shows \cite[Theorem 7.15]{cyctrans} that all Gelfand-Fuchs cohomology classes (hence all secondary characteristic classes) can be represented by cyclic cocycles on $C_{c}^{\infty}(V)\rtimes\Gamma$.  Connes gives in particular an explicit formula for the 
cyclic cocycle defined by the Godbillon-Vey invariant 
when $V = S^{1}$.  If $dx$ denotes the standard volume form on $S^{1}$, then associated to any $g\in\Gamma$ is the $\RB$-valued group cocycle
\[
\ell(g):=\log\bigg(\frac{d(x\cdot g^{-1})}{dx}\bigg).
\]
Connes shows that the formula
\begin{equation}\label{cgv}
    \phi_{GV}(f^{0},f^{1},f^{2})
    :=\sum_{g_{0}g_{1}g_{2}=1_{\Gamma}}\int_{S^{1}}
    f^{0}(x)f^{1}(x\cdot g_{0})f^{2}(x\cdot g_{0}g_{1})
    \big(d\ell(g_{1}g_{2})\ell(g_{2}) - \ell(g_{1}g_{2})d\ell(g_{2})\big)
\end{equation}
defines a cyclic 2-cocycle on $C_{c}^{\infty}(V)\rtimes\Gamma$, 
and that the class of this 2-cocycle coincides with that 
defined by the Godbillon-Vey invariant.

More recently, Connes and Moscovici have used a deep link with Hopf symmetry \cite{hopf1} to construct a characteristic map sending Gelfand-Fuchs cocycles to cyclic cocycles on the convolution algebra $C_{c}^{\infty}(F^{+}(V))\rtimes\Gamma$ of the groupoid $F^{+}(V)\rtimes\Gamma$ associated to the lift of $\Gamma$ to the oriented frame bundle $F^{+}(V)$ for $V$.  Connes and Moscovici show in \cite{backindgeom} that the formula
\begin{equation}\label{cmgv}
	\tilde{\phi}_{GV}(a^{0},a^{1}):=\sum_{g_{0}g_{1} = 1_{\Gamma}}\int_{F^{+}(V)}a^{0}(y)(\delta_{1}a^{1})(y\cdot g_{0})\tilde{\omega}(y),
\end{equation}
where $\delta_{1}$ is a derivation of $C_{c}^{\infty}(F^{+}(V))\rtimes\Gamma$ related to $d\ell$ and where $\tilde{\omega}$ is a $G$-invariant transverse volume form on $F^{+}(V)$, defines a 1-cocycle on $C_{c}^{\infty}(F^{+}(V))\rtimes\Gamma$ that represents the Godbillon-Vey invariant.  As will be shown in this paper, the derivation $\delta_{1}$ can be realised in the \emph{non-\'{e}tale setting} of the \emph{full holonomy groupoid $\GG$} of a foliated manifold, where it arises as a commutator between convolution along $\GG$ with a dual Dirac operator on a Hilbert space of sections of an exterior algebra bundle. In noncommutative geometry, the Godbillon-Vey invariant has since been further explored in groupoid cohomology \cite{crainic1}, cyclic cohomology \cite{goro1,goro2}, via its pairing with the indices of longitudinal Dirac operators \cite{mori1}, and in relation to manifolds with boundary \cite{mori2}.

Accompanying his introduction of the formula \eqref{cgv} for the cyclic cocycle $\phi_{GV}$, 
Connes remarks \cite[Page 4]{cyctrans} that the 
pairing of $\phi_{GV}$ with $K$-theory will not in general 
be integer-valued, which implies that $\phi_{GV}$ must not 
arise as the Chern character of a spectral triple on 
$C_{c}^{\infty}(V)\rtimes\Gamma$.  
Such constraints do not apply to
\emph{semifinite spectral triples}, whose pairings with 
$K$-theory need not lie in the integers, \cite{CC,benfack,cp1}.  

In this paper 
we will recover the analogue of formula \eqref{cmgv} in the global setting of the full holonomy groupoid $\GG$, from a semifinite spectral triple.  Bearing in mind the close relationship between semifinite spectral triples and $KK$-theory \cite{KNR}, this fact can be seen already in the \'{e}tale case of an action groupoid of the form $V\rtimes\Gamma$ using the formalism of differential forms on jet bundles arising from Gelfand-Fuchs cohomology \cite[Proposition 19]{backindgeom}.  An entirely novel nuance of our constructions, however, is that they are \emph{global in nature}, applying immediately to foliated manifolds \emph{without} needing to choose a complete transversal and pass through a Morita equivalence.  This has the advantage of producing cocycles that are defined in terms of global geometric data for $(M,\FF)$, which previously has not been attempted.

We now outline the layout of the paper.  Section 1 will 
discuss the background required on Clifford bundles, groupoid actions, semifinite spectral 
triples and groupoid equivariant $KK$-theory.  Section 2 
will detail the constructions of the $KK$-classes required.  
The constructions of this section are very natural for foliations 
of arbitrary codimension, so will be carried out at this level of 
generality.  Section 3 will consist of the proof of an index 
theorem in codimension 1 which states that the pairing with $K$-theory of
the semifinite spectral triple obtained using the constructions of Section 2 coincides with the pairing coming from the Connes-Moscovici
Godbillon-Vey cyclic cocycle.  Finally in Section 5 we describe how, in codimension 1, our constructions can be viewed as a global geometric analogue of the jet bundle approach described by Connes \cite{cyctrans}.  In particular this justifies our claim that the index formula we obtain really does represent the Godbillon-Vey invariant.

We remark that while the 
spectral triple itself can be easily constructed for foliations 
of arbitrary codimension, it is at this stage unclear whether 
the corresponding index pairing continues to compute the 
pairing of the higher codimension Godbillon-Vey invariant 
with $K$-theory.  We leave this question to future work.

\subsection{Acknowledgements}

LM thanks the Australian Federal Government for a Research Training Program scholarship. AR was partially supported by the BFS/TFS project Pure Mathematics in Norway.  LM and AR 
thank Moulay Benameur for supporting a visit of LM to 
Montpellier in the (northern) Fall of 2018. Both authors 
thank Alan Carey, Bram Mesland, Moulay Benameur, Mathai Varghese and Ryszard Nest 
for helpful discussions, and the anonymous referee for a thoughtful reading. Both authors acknowledge the support of
the Erwin Schr\"{o}dinger Institute where part of this work was conducted.

\section{Background}

Here we recall some basic facts about groupoid actions on spaces, Clifford algebras, semifinite
spectral triples, groupoid actions on algebras and the resulting
equivariant Kasparov theory.

%
We will 
assume that the reader is familiar with locally compact groupoids 
and their associated convolution algebras \cite{folops,renault}.
All Hilbert spaces are assumed to be separable.
For such a Hilbert space $\HH$, 
we denote by $\BB(\HH)$ the bounded operators on 
$\HH$ and by $\KK(\HH)$ the compact operators on 
$\HH$.  Inner products on Hilbert modules and Hilbert spaces are assumed to be conjugate-linear in the left variable and linear in the right.

If $X$, $Y$ and $Z$ are sets with maps $f:Y\rightarrow X$ and $g:Z\rightarrow X$, we denote by $Y\times_{f,g}Z$ the fibred product $
\{(y,z)\in Y\times Z:f(y) = g(z)\}$ of $Y$ and $Z$.

\subsection{Clifford algebras}

For our constructions we will need some facts regarding 
Clifford algebras and their representations on exterior 
algebra bundles.  First, if $(V,\langle\cdot,\cdot\rangle)$ 
is a real inner product space with nondegenerate inner product, 
we denote by $\Cliff(V)$ the \emph{complex Clifford algebra} 
of $V$, which is the complexification of the real Clifford algebra
$\cliff(V,\langle\cdot,\cdot\rangle)$. 

There exists a linear isomorphism $\psi_{V}:\Lambda^{*}V\rightarrow\cliff(V,\langle\cdot,\cdot\rangle)$ between the exterior algebra and the Clifford algebra of $V$ defined with respect to any orthonormal basis $\{e_{1},\dots,e_{\rank(V)}\}$ by
\[
\psi_{V}(e_{i_{1}}\wedge\dots\wedge e_{i_{r}}):=e_{i_{1}}\cdot\dots\cdot e_{i_{r}}
\]
for any multi-index $(i_{1},\dots,i_{r})$ with $r\leq\rank(V)$. The isomorphism $\psi_{V}$ determines the structure of a Clifford bimodule on $\Lambda^{*}(V)$, with left action given by
\[
c_{L}(a)w:=\psi_{V}^{-1}(a\cdot\psi_{V}(w))
\]
and right action given by
\[
c_{R}(a)w:=\psi_{V}^{-1}(\psi_{V}(w)\cdot a)
\]
for $a\in\cliff(V)$ and $w\in\Lambda^{*}(V)$.  
We have the following important lemma describing how these representations behave with respect to orthogonal maps.

\begin{lemma}\label{cliffacthom}
	Let $V$ and $W$ be finite dimensional inner product spaces and let $\psi_{V}:\Lambda^{*}V\rightarrow\cliff(V)$, $\psi_{W}:\Lambda^{*}W\rightarrow\cliff(W)$ be the corresponding linear isomorphisms.  Then if $A:V\rightarrow W$ is an orthogonal transformation with induced algebra isomorphisms $A_{\Lambda}:\Lambda^{*}V\rightarrow\Lambda^{*}W$ and $A_{\cliff}:\cliff(V)\rightarrow\cliff(W)$, we have
	\[
	A_{\cliff}\circ\psi_{V} = \psi_{W}\circ A_{\Lambda}.
	\]
\end{lemma}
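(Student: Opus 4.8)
The plan is to reduce the statement to the basis-independence of the symbol maps $\psi_V$ and $\psi_W$, after which the identity becomes a formal consequence of $A_\Lambda$ and $A_{\cliff}$ being the algebra homomorphisms induced by $A$. Both $A_{\cliff}\circ\psi_V$ and $\psi_W\circ A_\Lambda$ are linear maps $\Lambda^*V\to\cliff(W)$, so it suffices to check that they agree on the basis elements $e_{i_1}\wedge\cdots\wedge e_{i_r}$ built from the orthonormal basis $\{e_1,\dots,e_n\}$ of $V$ used to define $\psi_V$.

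First I would record that $\psi_V$ admits the basis-free antisymmetrization description
\[
\psi_V(v_1\wedge\cdots\wedge v_r)=\frac{1}{r!}\sum_{\sigma\in S_r}\operatorname{sgn}(\sigma)\,v_{\sigma(1)}\cdots v_{\sigma(r)},
\]
where the products on the right are taken in $\cliff(V)$. To see that this agrees with the defining formula, note that distinct orthonormal vectors anticommute in the Clifford algebra, since $\langle e_i,e_j\rangle=0$ forces $e_ie_j=-e_je_i$; hence for an ordered multi-index each summand equals $e_{i_1}\cdots e_{i_r}$ once the two sign factors cancel, and the average collapses to $e_{i_1}\cdots e_{i_r}$. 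In particular $\psi_V$ does not depend on the choice of orthonormal basis, and the same holds verbatim for $\psi_W$.

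Since $A$ is orthogonal, $\{Ae_1,\dots,Ae_n\}$ is an orthonormal basis of $W$, and by basis-independence I may compute $\psi_W$ using it. The identity then follows by direct evaluation: on the one hand
\[
A_{\cliff}\big(\psi_V(e_{i_1}\wedge\cdots\wedge e_{i_r})\big)=A_{\cliff}(e_{i_1}\cdots e_{i_r})=(Ae_{i_1})\cdots(Ae_{i_r}),
\]
because $A_{\cliff}$ is the algebra homomorphism extending $A$; on the other hand
\[
\psi_W\big(A_\Lambda(e_{i_1}\wedge\cdots\wedge e_{i_r})\big)=\psi_W(Ae_{i_1}\wedge\cdots\wedge Ae_{i_r})=(Ae_{i_1})\cdots(Ae_{i_r}).
\]
The two expressions coincide, and since both sides of the asserted equation are linear and agree on a basis of $\Lambda^*V$, they are equal.

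The only real content is the basis-independence of $\psi$; everything after that is bookkeeping. I therefore expect the main (and relatively minor) obstacle to be the anticommutation computation establishing the antisymmetrization formula, together with the care needed to ensure that $\psi_W$ really may be evaluated in the image basis $\{Ae_i\}$ rather than in whatever orthonormal basis of $W$ was originally fixed. An alternative packaging avoids the antisymmetrization formula entirely: one verifies directly that $\psi_W$ computed in the fixed basis of $W$ and in $\{Ae_i\}$ agree, which is the identical anticommutation argument phrased without reference to $S_r$.
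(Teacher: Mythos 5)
Your proof is correct, and it takes a somewhat different route from the paper's. The paper argues via the universal property of the Clifford algebra: it checks that $j(v):=\psi_W(A_\Lambda v)$ satisfies $j(v)^2=\|v\|^2 1_{\cliff(W)}$ for $v\in V$, deduces a unique algebra homomorphism $\cliff(V)\to\cliff(W)$ extending $j$, identifies that homomorphism with $A_{\cliff}$, and then performs essentially the same evaluation on basis wedges $e_{i_1}\wedge\cdots\wedge e_{i_k}$ that you do. You instead make the basis-independence of the symbol map the central point, establishing it through the antisymmetrization formula $\psi_V(v_1\wedge\cdots\wedge v_r)=\frac{1}{r!}\sum_{\sigma}\operatorname{sgn}(\sigma)v_{\sigma(1)}\cdots v_{\sigma(r)}$, and then evaluating $\psi_W$ in the transported orthonormal basis $\{Ae_i\}$. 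The two arguments patch different potential gaps. The paper's universal-property step pins down what $A_{\cliff}$ is, which you simply take as the defining property of the induced homomorphism --- legitimate, since functoriality of $\cliff$ means precisely that $A_{\cliff}$ is the unique algebra map extending $A$. Conversely, your basis-independence step justifies the final equality in the paper's computation, namely that the Clifford product $\psi_W(A_\Lambda e_{i_1})\cdots\psi_W(A_\Lambda e_{i_k})$ equals $\psi_W(A_\Lambda(e_{i_1}\wedge\cdots\wedge e_{i_k}))$; this is left implicit in the paper, where $\psi_W$ is silently evaluated on wedges of the orthonormal vectors $Ae_{i_j}$, which need not lie in the basis originally used to define $\psi_W$. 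So your write-up is, if anything, more complete at the one point the published proof elides, and your antisymmetrization argument is sound: the pairing of $\sigma$ with a transposed permutation shows the map is alternating (hence descends to $\Lambda^r V$ basis-freely), and anticommutation of orthogonal vectors collapses the average to $e_{i_1}\cdots e_{i_r}$ on ordered multi-indices. Your alternative packaging at the end --- directly comparing $\psi_W$ computed in the fixed basis and in $\{Ae_i\}$ --- is equally valid and is closest in spirit to what the paper tacitly assumes.
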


\begin{proof}
	Regard $V$ as a subspace of $\Lambda^{*}V$ in the usual way, let $\iota:V\rightarrow \cliff(V)$ denote the inclusion map, and consider the map $j:=(\psi_{W}\circ A_{\Lambda})|_{V}:V\rightarrow\cliff(W)$.  Since $A$ is orthogonal, we have $j(v)^2 = \|v\|^21_{\cliff(W)}$ and so by the universal property of the Clifford algebra, there is a unique algebra isomorphism $\phi:\cliff(V)\rightarrow\cliff(W)$ such that $\phi\circ \iota = j$.  Given any vector $v\in V$ we see that
	\[
	j(v) = A_{\cliff}\circ\iota(v)
	\]
	so that $\phi = A_{\cliff}$.  Given an orthonormal basis $\{e_{1},\dots,e_{\dim(V)}\}$ for $V$, and a multi-index $(i_1,\dots,i_k)$ we calculate
	\begin{align*}
	A_{\cliff}\circ\psi_{V}(e_{i_{1}}\wedge\cdots\wedge e_{i_{k}}) &= A_{\cliff}(\iota(e_{i_{1}})\cdots\iota(e_{i_{k}}))\\ &= A_{\cliff}(\iota(e_{i_{1}}))\cdots A_{\cliff}(\iota(e_{i_{k}}))\\ &= \psi_{W}(A_{\Lambda}(e_{i_{1}}))\wedge\cdots\wedge\psi_{W}(A_{\Lambda}(e_{i_{k}}))\\ &= \psi_{W}\circ A_{\Lambda}(e_{i_{1}}\wedge\cdots\wedge e_{i_{k}}),
	\end{align*}
	where the first line is due to the equality $\psi_{V}|_{V} = \iota$, and the second is since $A_{\cliff}$ is an algebra homomorphism.  By linearity we obtain the required identity.
\end{proof}

By abuse of notation, we have a linear isomorphism 
$\psi_{V}:\Lambda^{*}(V)\otimes\CB\rightarrow \Cliff(V)$, which gives, 
by the same formulae as in the real case, 
commuting actions $c_{L}$ and $c_{R}$ of $\Cliff(V)$ 
on $\Lambda^{*}(V)\otimes\CB$. Any orthogonal map 
$A:V\rightarrow W$ of inner product spaces has the property 
that the induced maps $A_{\Cliff}:\Cliff(V)\rightarrow\Cliff(W)$  and
$A_{\Lambda_{\CB}}:\Lambda^{*}(V)\otimes\CB\rightarrow \Lambda^{*}(W)
\otimes\CB$ satisfy $A_{\Cliff}\circ\psi_{V} = \psi_{W}\circ A_{\Lambda_{\CB}}$.

%

%

If $Y$ is a manifold and $E\rightarrow Y$ is a Euclidean 
vector bundle, we obtain a corresponding Clifford algebra 
bundle $\cliff(E)$ and exterior bundle $\Lambda^{*}(E)$, 
as well as corresponding complexifications 
$\Cliff(E) = \cliff(E)\otimes\CB$ and $\Lambda^{*}(E)\otimes\CB$.  
Operating pointwise, we have an isomorphism 
$\psi_{E}:\Lambda^{*}(E)\otimes\CB\rightarrow\Cliff(E)$ 
of vector spaces giving $\Lambda^{*}(E)\otimes\CB$ the 
structure of a $\Cliff(E)$-bimodule, with left and right actions 
denoted, again by abuse of notation, by $c_{L}$ and $c_{R}$ 
respectively.  We will denote by $\CCl(E)$ 
the continuous sections vanishing at infinity of the bundle 
$\Cliff(E)$ over $Y$.  This $\CCl(E)$ is a $C^{*}$-algebra 
and is $\ZB_{2}$-graded by even and odd elements.

\subsection{$\GG$-spaces and $\GG$-bundles}

Let $\GG$ be a groupoid, with unit space $X$ and range and source maps $r:\GG\rightarrow X$ and $s:\GG\rightarrow X$ respectively.  
We say that $\GG$ \emph{acts on (the left of) a set $Y$} or that \emph{$Y$ is a $\GG$-space} if there exists a map $a:Y\rightarrow X$ called the \emph{anchor map} and a map $m:\GG\times_{s,a}Y\rightarrow Y$, denoted $m(u,y):=u\cdot y$, such that
\begin{enumerate}
	\item $a(u\cdot y) = r(u)$ for all $(u,y)\in \GG\times_{s,a}Y$,
	\item $(uv)\cdot y = u\cdot(v\cdot y)$ for all $(v,y)\in \GG\times_{s,a}Y$ and $(u,v)\in \GG^{(2)}$,
	\item $a(y)\cdot y = y$ for all $y\in Y$.
\end{enumerate}
If $\GG$ and $Y$ are topological (resp. smooth) spaces we require the maps $a$ and $m$ to be continuous (resp. smooth).  The simplest example of a $\GG$-space is the unit space $X$ of $\GG$.

If $\GG$ acts on $Y$, we denote by $Y\rtimes \GG$ the space $Y\times_{a,r}\GG$, regarded as a groupoid whose unit space is $Y$, with range and source maps $r(y,u):=y$ and $s(y,u):= u^{-1}\cdot y$ respectively, and with multiplication defined by
\[
	(y,u)\cdot(u^{-1}\cdot y,v):=(y,uv)
\]
for all $(y,u)\in Y\times_{a,r}\GG$ and $(u,v)\in \GG^{(2)}$.  If $\GG$ and $Y$ are topological (resp.) smooth spaces, the groupoid $Y\rtimes \GG$ is equipped with a topological (resp. smooth) structure from its containment as a subspace of the topological (resp. smooth) space $Y\times \GG$.  While for left $\GG$-spaces it is more natural to consider the analogous (and isomorphic) groupoid $\GG\ltimes Y$ obtained from the set $\GG\times_{s,a}Y$, it will be easier for our purposes to use $Y\rtimes \GG$ because, as we will see, our convention in using $\GG$-equivariant Kasparov theory consists in forming pullbacks using the range map rather than the source.

We say that a vector bundle $\pi:E\rightarrow X$ is \emph{$\GG$-equivariant} if $E$ is a $\GG$-space, with $\GG$-action conventionally denoted $(u,e)\mapsto u_{*}e$ and with anchor map $\pi$, and if for each $u\in \GG$ the map $(u,e)\mapsto u_{*}e$ defined on $E_{s(u)}:=\pi^{-1}\{s(u)\}$ is a vector space isomorphism $E_{s(u)}\rightarrow E_{r(u)}$.  More generally, if $\pi:E\rightarrow Y$ is a vector bundle over a $\GG$-space $Y$, we say that $E$ is \emph{$\GG$-equivariant} if it is $Y\rtimes \GG$-equivariant as a bundle over $Y$, in which case we will often denote the map $(Y\rtimes \GG)\times_{s,\pi}E\rightarrow E$, $((y,u),e)\mapsto (y,u)_{*} e$, by simply $(u,e)\mapsto u_{*} e$.  If $\pi:E\rightarrow X$ admits a Euclidean (resp. Hermitian) structure, we say that $E$ is a \emph{$\GG$-equivariant Euclidean (resp. Hermitian) bundle} if for all $(y,u)\in Y\rtimes \GG$ the linear isomorphism $E_{u^{-1}\cdot y}\rightarrow E_{y}$ defined by $(u,e)\mapsto u_{*}e$ is orthogonal (resp. unitary).

If $\pi:E\rightarrow Y$ is 
a $\GG$-equivariant vector bundle over $Y$, then by functoriality $\Lambda^{*}(E)\otimes\CB$ is also an equivariant bundle over $Y$, with action of $u\in \GG$ denoted by $u_{*}:\Lambda^{*}(E|_{Y_{s(u)}})\otimes\CB\rightarrow \Lambda^{*}(E|_{Y_{r(u)}})\otimes\CB$.  If moreover $E$ is an equivariant Euclidean bundle, then by functoriality $\Cliff(E)$ is also an equivariant bundle, with action of $u\in \GG$ denoted by $u_{\diamond}:\Cliff(E|_{Y_{s(u)}})\rightarrow\Cliff(E|_{Y_{r(u)}})$.  In this case, by Lemma \ref{cliffacthom} we have
\begin{equation}
\label{cL}
u_{*}(c_{L}(a)e) = c_{L}(u_{\diamond}a)(u_{*}e)
\end{equation}
and
\begin{equation}
\label{cR}
u_{*}(c_{R}(a)e) = c_{R}(u_{\diamond}a)(u_{*} e)
\end{equation}
for all $u\in \GG$, $a\in \Cliff(E|_{Y_{s(u)}})$ 
and $e\in \Lambda^{*}(E|_{Y_{s(u)}})$.\

When $(M,\FF)$ is a foliated manifold with holonomy groupoid $\GG$, the normal bundle $N = TM/T\FF\rightarrow M$ is a $\GG$-equivariant bundle.  As this fact is fundamental for our constructions, let us briefly review why it is the case.  We choose a countable covering of $M$ by foliated charts $\phi_{i}:U_{i}\cong T_{i}\times P_{i}$, where $T_{i}\subset\RB^{q}$ and $P_{i}\subset \RB^{p}$ are open balls, with change-of-chart maps $\varphi_{j,i}:=\phi_{j}\circ\phi_{i}^{-1}:\phi_{i}(U_{i}\cap U_{j})\rightarrow\phi_{j}(U_{i}\cap U_{j})$ of the form
\[
	\varphi_{j,i}(t,p) = (h_{j,i}(t),\tilde{\varphi}_{j,i}(t,p)),
\]
such that the $h_{i,j}$ are \emph{compatible} in the sense that they satisfy
\[
	h_{i,k} = h_{i,j}\circ h_{j,k}
\]
whenever $U_{i}\cap U_{j}\cap U_{k}\neq\emptyset$.  That such a covering can be chosen can be regarded as the definition of the foliation $\FF$ on $M$ \cite[Chapter 1.2]{cc1}.  We say that a path $\gamma:[0,1]\rightarrow M$ is \emph{leafwise} if its image is entirely contained in a leaf $L$ of $M$, and we refer to its endpoints $\gamma(0)$ and $\gamma(1)$ as its \emph{source} and \emph{range}, denoted $s(\gamma)$ and $r(\gamma)$ respectively. Any leafwise path $\gamma$ whose image is contained in a union $U_{0}\cup U_{1}$ of charts such that $U_{0}\cap U_{1}\neq\emptyset$, and with $s(\gamma)\in U_{0}$ and $r(\gamma)\in U_{1}$, determines a local diffeomorphism $h_{\gamma}:=h_{1,0}$ on a small neighbourhood of $T_{0}\subset\RB^{q}$.  More generally, if the image of a leafwise path $\gamma$ is covered by a chain of charts $\{U_{0},\dots, U_{k}\}$ such that for each $0\leq j<k$ we have $U_{j}\cap U_{j+1}\neq \emptyset$, on a sufficiently small neighbourhood of $T_{0}$ we may define a local diffeomorphism
\[
	h_{\gamma}:=h_{k,k-1}\circ h_{k-1,k-2}\circ\cdots\circ h_{1,0}
\]
mapping onto a small neighbourhood of $T_{k}$.  Because of the compatibility of the $h_{i,j}$, the germ of $h_{\gamma}$ at $s(\gamma)$ does not depend on the chain of charts chosen in its definition.  By definition, the holonomy groupoid $\GG$ consists of equivalence classes of leafwise paths $\gamma$ for which $\gamma_{1}\sim\gamma_{2}$ if and only if $\gamma_{1}$ and $\gamma_{2}$ have the same source and range and the germ at $s(\gamma_{1}) = s(\gamma_{2})$ of $h_{\gamma_{1}}$ is equal to that of $h_{\gamma_{2}}$.

In the coordinates defined by a chart $U_{j}$, the fibres of $N$ identify with tangent vectors to the transversal neighbourhood $T_{j}$, and via this identification it follows that for any leafwise path $\gamma$ in $M$, the derivative of $h_{\gamma}$ furnishes a linear isomorphism
\[
	dh_{\gamma}:N_{s(\gamma)}\rightarrow N_{r(\gamma)}.
\]
It can be seen from the definition of $h_{\gamma}$ that $dh_{\gamma_{1}}\circ dh_{\gamma_{2}} = dh_{\gamma_{1}\gamma_{2}}$ whenever the range of $\gamma_{2}$ is equal to the source of $\gamma_{1}$, where $\gamma_{1}\gamma_{2}$ is the path obtained by concatenating $\gamma_{1}$ and $\gamma_{2}$.  Since local diffeomorphisms with the same germ at a point have the same derivative at that point, to any $u\in \GG$ corresponds a well-defined linear isomorphism $u_{*}:=h_{\gamma}:N_{s(u)}\rightarrow N_{s(u)}$ for any path $\gamma$ that represents $u$.  Since $dh_{\gamma_{1}\gamma_{2}}= dh_{\gamma_{1}}\circ dh_{\gamma_{2}}$, we have $(uv)_{*}= u_{*}\circ v_{*}$ for all $(u,v)\in \GG^{(2)}$, and so $N$ is indeed a $\GG$-equivariant bundle over $M$.

We remark that in general the normal bundle $N$ of a foliated manifold $(M,\FF)$ will not admit the structure of a $\GG$-equivariant Euclidean bundle.  Indeed, the existence of a $\GG$-equivariant Euclidean structure for $N$ implies the existence of a $\GG$-invariant transverse volume form $\omega$ for $(M,\FF)$, and hence implies the existence of a faithful normal semifinite trace on the von Neumann algebra of $(M,\FF)$ defined by restricting functions in the weakly dense algebra $C_{c}(\GG)$ to $M$, and then integrating with respect to $\omega$.  If the Godbillon-Vey invariant of $(M,\FF)$ is nonzero, however, then by results of Hurder and Katok \cite[Theorem 2]{hurdkat} and, in codimension 1, Connes \cite[Theorem 7.14]{cyctrans}, the von Neumann algebra of $(M,\FF)$ contains a type III factor and so admits no nonzero semifinite normal traces.  Examples of foliated manifolds with nonzero Godbillon-Vey invariant are known to be plentiful \cite{thurston1}.

\subsection{Equivariant $KK$-theory for locally Hausdorff groupoids}

Equivariant $KK$-theory for Hausdorff topological groupoids 
was first developed by Le Gall \cite{legall}.  Since foliated manifolds 
generally have only locally Hausdorff holonomy groupoids, Le Gall's 
treatment requires extension for applications to foliation theory.  
Androulidakis and Skandalis \cite{iakovos} have developed an 
equivariant $KK$-theory for the holonomy groupoids arising from 
singular foliations, whose topologies are generally even worse than 
the locally Hausdorff topologies on the holonomy groupoids of regular foliations, 
and which include all regular foliation groupoids as a subclass.  

This section will summarise the required results and definitions 
of Androulidakis and Skandalis in the setting of locally 
Hausdorff Lie groupoids, as well as giving the unbounded 
picture in parallel with work of Pierrot \cite{pierrot}.  See also 
Muhly and Williams \cite{muhwil} and Tu \cite{tu} for useful 
perspectives on non-Hausdorff groupoid actions which have 
further informed the exposition.  

Let $\GG$ be a locally Hausdorff Lie groupoid with locally 
compact Hausdorff unit space $X$, and let $\{U_{i}\}_{i\in I}$ be
a countable cover of $\GG$ by Hausdorff open sets. For each $i\in I$ 
we let $r_{i}:=r|_{U_{i}}$ and $s_{i}:=s|_{U_{i}}$ be the restrictions 
of range and source respectively to the set $U_{i}$.

\begin{defn}
	A \textbf{$C_{0}(X)$-algebra} is a $C^{*}$-algebra $A$ 
	together with a homomorphism $\theta:C_{0}(X)\rightarrow \MM(A)$ 
	into the multiplier algebra of $A$ such that $\theta(C_{0}(X))A = A$.  
	For $a\in A$ and $f\in C_{0}(X)$, we will often denote $\theta(f)a$ 
	by $f\cdot a$.
	
	For $x\in X$, the \textbf{fibre over $x$} is the 
	algebra $A_{x}:=A/I_{x}A$, where $I_{x}$ is the 
	kernel of the evaluation functional $C_{0}(X)\ni f\mapsto f(x)$ on $C_{0}(X)$.
	
	If $A$ and $B$ are $C_{0}(X)$-algebras, 
	a homomorphism $\phi:A\rightarrow B$ is said 
	to be a \textbf{$C_{0}(X)$-homomorphism} 
	if $\phi(f\cdot a) = f\cdot\phi(a)$ for all $f\in C_{0}(X)$ 
	and $a\in A$.  Such a homomorphism induces a family 
	$\phi_{x}:A_{x}\rightarrow B_{x}$ of homomorphisms between the fibres.
\end{defn}

The simplest nontrivial example of a $C_{0}(X)$-algebra is $C_{0}(Y)$, 
where $Y$ is a locally compact Hausdorff space equipped with a 
continuous map $p:Y\rightarrow X$.  The $C_{0}(X)$-structure of $C_{0}(Y)$ is given by $\theta(f)g(y):=f(p(y))g(y)$ for all $f\in C_{0}(X)$ and $g\in C_{0}(Y)$, and the fibre over $x\in X$ 
is $C_{0}(Y)_{x} = C_{0}(Y_{x})$, where $Y_{x}:=p^{-1}\{x\}$.

\begin{defn}
	Let $A$ be a $C_{0}(X)$-algebra, and let $p:Y\rightarrow X$ be a continuous map of locally compact Hausdorff spaces.  Then the \textbf{pullback} of $A$ by $p$ is the $C_{0}(Y)$-algebra $p^{*}A:=C_{0}(Y)\otimes_{p,C_{0}(X)}A$, where we take the balanced tensor product by regarding the $C_{0}(X)$-algebras $C_{0}(Y)$ and $A$ as $C_{0}(X)$-modules.  If there is no ambiguity about the map $p$, it will often be omitted from the notation, so that $p^{*}A = C_{0}(Y)\otimes_{C_{0}(X)}A$.
\end{defn}

It is easy to check that if $A$ is a $C_{0}(X)$-algebra and $p:Y\rightarrow X$ is a continuous map of locally compact Hausdorff spaces, the fibre over $y\in Y$ of $p^{*}A$ is $A_{p(y)}$.  Equipped with the notion of pullbacks, we can define what is meant by a $\GG$-algebra.

\begin{defn}
	Let $A$ be a $C_{0}(X)$-algebra.  A \textbf{$\GG$-action} on $A$ is a family $\alpha = \{\alpha^{i}:s_{i}^{*}A\rightarrow r_{i}^{*}A\}_{i\in I}$ of grading-preserving $C_{0}(U_{i})$-isomorphisms, such that $\alpha^{i}|_{s|_{U_{i}\cap U_{j}}^{*}A} = \alpha^{j}|_{s|_{U_{i}\cap U_{j}}^{*}A}$ for all $i,j\in I$, and such that the induced homomorphisms $\alpha_{u}:A_{s(u)}\rightarrow A_{r(u)}$ satisfy $\alpha_{uv} = \alpha_{u}\circ\alpha_{v}$.  If $A$ admits a $\GG$-action $\alpha$, we call $(A,\alpha)$ a \textbf{$\GG$-algebra}.
\end{defn}

The simplest nontrivial example of a $\GG$-algebra is $C_{0}(Y)$, where $Y$ is a $\GG$-space with anchor map $p:Y\rightarrow X$, and where $C_{0}(Y)$ is equipped with the $\GG$-action
\[
	\alpha_{u}(f)(y):=f(u^{-1}\cdot y)
\]
for all $u\in \GG$ and $f\in C_{0}(Y_{r(u)})$.

Now suppose that $E$ is a Hilbert module over a $\GG$-algebra $A$.  For $x\in X$, we can consider the fibre $E_{x}:=E\otimes_{A}A_{x}$, which is a Hilbert $A_{x}$-module, and if $p:Y\rightarrow X$ is a continuous map of locally compact Hausdorff spaces, we can consider the pullback $p^{*}E:=E\otimes_{A}p^{*}A$, which is a Hilbert $p^{*}A$-module.  If $T$ is an $A$-linear operator on $E$, we let $p^{*}T:=T\otimes 1_{p^{*}A}$ be its pullback to a $p^{*}A$-linear operator on $p^{*}E$.

\begin{defn}
	Let $(A,\alpha)$ be a $\GG$-algebra, and let $E$ be a 
	$\ZB_2$-graded Hilbert $A$-module.  A \textbf{$\GG$-action} on $E$ consists of a family $W = \{W^{i}:s_{i}^{*}E\rightarrow r_{i}^{*}E\}_{i\in I}$ of grading-preserving isometric Banach space isomorphisms, such that $W^{i}|_{s|_{U_{i}\cap U_{j}}^{*}E} = W^{j}|_{s|_{U_{i}\cap U_{j}}^{*}E}$ for all $i,j\in I$, and such that the induced isomorphisms $W_{u}:E_{s(u)}\rightarrow E_{r(u)}$ on the fibres satisfy $W_{uv} = W_{u}\circ W_{v}$, $\langle W_{u}\rho_{1},W_{u}\rho_{2}\rangle_{r(u)} = \alpha_{u}(\langle\rho_{1},\rho_{2}\rangle_{s(u)})$ and $W_{u}(\rho\cdot a) = W_{u}(\rho)\cdot\alpha_{u}(a)$ for all $(u,v)\in \GG^{(2)}$, $a\in A_{s(u)}$ and $\rho,\rho_{1},\rho_{2}\in E_{s(u)}$.  If $E$ admits a $\GG$-action $W$, we call $(E,W)$ a \textbf{$\GG$-Hilbert $A$-module}.
\end{defn}

If $V\rightarrow Y$ is a $\GG$-equivariant Hermitian vector bundle over a $\GG$-space $Y$, then the continuous sections vanishing at infinity $\Gamma_{0}(Y;V)$ of $V$ over $Y$ is a $\GG$-Hilbert $C_{0}(Y)$-module, with pointwise inner product and right action by $C_{0}(Y)$, and with $\GG$-action defined by
\begin{equation}\label{genhilmod}
	(W_{u}\rho)(y):=u_{*} \rho(u^{-1}\cdot y)
\end{equation}
for all $\rho\in\Gamma_{0}(Y_{r(u)};V|_{Y_{r(u)}})$.  All $\GG$-Hilbert module constructions in this paper will arise from some variant of the action \eqref{genhilmod}.

\begin{defn}
	If $B$ is a $\GG$-algebra, and $\pi:A\rightarrow\LL(E)$ is a representation of a $\GG$-algebra $(A,\alpha)$ on a $\GG$-Hilbert $B$-module $(E,W)$, we say that $\pi$ is \textbf{equivariant} if for all $i\in I$ we have
	\[
	\Ad_{W^{i}}(\pi^{s}_{i}(a)) = \pi^{r}_{i}(\alpha^{i}(a))
	\]
	for all $a\in A$.  Here $\pi^{s}_{i}:= 1_{C_{b}(U_{i})}\otimes\pi$ and $\pi^{r}_{i}:= 1_{C_{b}(U_{i})}\otimes\pi$ are respectively the induced homomorphisms $s_{i}^{*}A = C_{0}(U_{i})\otimes_{s,C_{0}(X)}A\rightarrow\LL(s^{*}_{i}E)$ and $r_{i}^{*}A = C_{0}(U_{i})\otimes_{r,C_{0}(X)}A\rightarrow\LL(r^{*}_{i}E)$.
\end{defn}

The definition of the equivariant $KK$-groups now follows in the usual way.

\begin{defn}	\label{gekkdefn}
Let $(A,\alpha)$ and $(B,\beta)$ be $\GG$-$C^{*}$-algebras.  
A \textbf{$\GG$-equivariant Kasparov $A$-$B$-module} is a 
triple $(A,{}_\pi E_B,F)$, where $(E,W)$ is a $\GG$-equivariant 
Hilbert $B$-module carrying an equivariant representation 
$\pi:A\rightarrow\LL(E)$, and where $F\in\LL(E)$ is 
homogeneous of degree 1 such that for all $a\in A$ one has
		\begin{enumerate}
			\item $\pi(a)(F-F^{*})\in\KK(E)$,
			\item $\pi(a)(F^{2}-1)\in\KK(E)$,
			\item $[F,\pi(a)]\in\KK(E)$,
		\end{enumerate}
		and such that for all $i\in I$
		\begin{enumerate}[resume]
			\item $\pi^{r}_{i}(r_{i}^{*}(a))(r_{i}^{*}F - W^{i}\circ s_{i}^{*}F\circ (W^{i})^{-1})\in r_{i}^{*}\KK(E)$.
		\end{enumerate}
		We say that two $\GG$-equivariant Kasparov 
		$A$-$B$-modules $(A,{}_\pi E_B,F)$ and 
		$(A,{}_{\pi'}E_B',F')$ are \textbf{unitarily equivalent} if 
		there exists a $\GG$-equivariant unitary 
		$V:E\rightarrow E'$ of degree 0 such 
		that $VFV^{*} = F'$ and $V\pi(a)V^{*} = \pi'(a)$ 
		for all $a\in A$.  We denote by $\EB^{\GG}(A,B)$ the 
		set of all unitary equivalence classes of $\GG$-equivariant 
		Kasparov $A$-$B$-modules.
		
		A \textbf{homotopy} in $\EB^{\GG}(A,B)$ is an 
		element of $\EB^{\GG}(A,B[0,1])$, and we define 
		$KK^{\GG}(A,B)$ to be the set of homotopy equivalence 
		classes in $\EB^{\GG}(A,B)$.
		
		The direct sum of $\GG$-equivariant Kasparov 
		$A$-$B$-modules makes $KK^{\GG}(A,B)$ into an abelian group.
\end{defn}

We also need \emph{unbounded representatives} of 
equivariant $KK$-classes.  The definition for such 
representatives is the natural extension of that due 
to Pierrot \cite{pierrot} to the locally Hausdorff case.  
We remark here that if $\AA$ is a dense $*$-subalgebra 
of a $C_{0}(X)$-algebra $A$, then we will assume that 
$C_{0}(X)\cdot \AA \subset\AA$, which will  be true in 
our examples.  We will denote by $\AA_{x}:=\AA/I_{x}\AA$ 
the fibre over $x\in X$, where as before $I_{x}$ is the 
kernel of the evaluation functional $f\mapsto f(x)$ on $C_{0}(X)$.

\begin{defn}
\label{unbdd}
	Let $A$ and $B$ be $\GG$-algebras.  An 
	\textbf{unbounded $\GG$-equivariant Kasparov $A$-$B$-module} 
	is a triple $(\AA,{}_{\pi}E,D)$, where $(E,W)$ is a 
	$\GG$-Hilbert $B$-module carrying an equivariant 
	representation $\pi$ of $A$ in $\LL(E)$, $D$ is a densely 
	defined, odd, unbounded, self-adjoint and regular operator 
	on $E$ commuting with the right action of $B$, and 
	where $\AA$ is a dense $*$-subalgebra of $A$ 
	preserved by the action of $\GG$ such that for all $a\in \AA$ one has:
	\begin{enumerate}
		\item $\pi(a)\Dom(D)\subset\Dom(D)$,
		\item $[D,\pi(a)]$ extends to an element of $\LL(E)$,
		\item $\pi(a)(1+D^{2})^{-\frac{1}{2}}\in\KK(E)$,
	\end{enumerate}
	and such that for all $i\in I$, $a\in \AA$ and $f\in C_{c}(U_{i})$ 
	one has
	\begin{enumerate}[resume]
		\item $f\cdot \pi^{r}_{i}(r_{i}^{*}(a))\cdot (r_{i}^{*}D - W^{i}\circ s_{i}^{*}D\circ (W^{i})^{-1})$ extends to an element of $\LL(r_{i}^{*}E)$ and
		\item $\Dom((r_{i}^{*}D)f) = W^{i}\Dom((s_{i}^{*}D)f)$.
	\end{enumerate}
\end{defn}

That all unbounded equivariant Kasparov modules define 
classes in $KK^{\GG}$ is an easy consequence of the 
corresponding result by Pierrot for Hausdorff groupoids.

\begin{prop}
	Let $A$ and $B$ be $\GG$-algebras, and let 
	$(\AA,{}_{\pi}E,D)$ be an unbounded $\GG$-equivariant 
	Kasparov $A$-$B$-module.  Then 
	$(A,{}_\pi E,D(1+D^2)^{-\frac{1}{2}})$ is 
	a $\GG$-equivariant Kasparov $A$-$B$-module.
\end{prop}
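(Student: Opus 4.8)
The plan is to apply the bounded transform $F := D(1+D^2)^{-\frac12}$ and verify the four conditions of Definition \ref{gekkdefn} directly from the five conditions of Definition \ref{unbdd}, treating the three non-equivariant conditions by the classical argument of Baaj--Julg and the equivariance condition (4) by reducing, chart by chart, to Pierrot's Hausdorff result.

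First I would dispose of conditions (1)--(3) of Definition \ref{gekkdefn}. Since $D$ is self-adjoint and the transform function $x\mapsto x(1+x^2)^{-\frac12}$ is real-valued, $F$ is self-adjoint, so $F - F^* = 0$ and (1) is immediate. For (2), functional calculus gives $F^2 - 1 = -(1+D^2)^{-1}$, whence $\pi(a)(F^2 - 1) = -\big(\pi(a)(1+D^2)^{-\frac12}\big)(1+D^2)^{-\frac12}$ is the product of a compact operator (condition (3) of Definition \ref{unbdd}) with a bounded one, hence compact. Condition (3), the compactness of $[F,\pi(a)]$, is exactly the conclusion of the Baaj--Julg theorem from the hypotheses that $\pi(a)$ preserves $\Dom(D)$, that $[D,\pi(a)]$ is bounded, and that $\pi(a)(1+D^2)^{-\frac12}$ is compact; I would simply invoke it, the mechanism being the norm-convergent integral formula $F = \tfrac{1}{\pi}\int_0^\infty D(1+t+D^2)^{-1}\,t^{-\frac12}\,dt$ together with the resolvent expansion of $[D(1+t+D^2)^{-1},\pi(a)]$.

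The substance is condition (4). Here I would work over a fixed Hausdorff chart $U_i$ and set $D_1 := r_i^*D$ and $D_2 := W^i\circ s_i^*D\circ (W^i)^{-1}$, both self-adjoint regular on $r_i^*E$. Because $W^i$ is a degree-zero isometric isomorphism, conjugation commutes with the continuous functional calculus, so $W^i\circ s_i^*F\circ (W^i)^{-1}$ is the bounded transform of $D_2$, while $r_i^*F$ is the bounded transform of $D_1$; thus condition (4) asks precisely that $\pi^r_i(r_i^*(a))\big(f(D_1) - f(D_2)\big)\in r_i^*\KK(E)$, where $f$ denotes the transform. Applying the integral formula to each term and writing $A := D_1 - D_2$, $G_j := (1+t+D_j^2)^{-1}$, the resolvent identity yields the sandwiched form
\[
D_1 G_1 - D_2 G_2 = G_1\big[(1+t)A - D_1 A D_2\big]G_2,
\]
so that the flanking factor $\pi^r_i(r_i^*(a))\,G_1$ is compact (the pullback of condition (3) of Definition \ref{unbdd}, since $\pi^r_i(r_i^*(a))(1+D_1^2)^{-\frac12}\in r_i^*\KK(E)$), while conditions (4) and (5) of Definition \ref{unbdd}---the local boundedness of $f\,\pi^r_i(r_i^*(a))(D_1 - D_2)$ and the equality $\Dom((r_i^*D)f) = W^i\Dom((s_i^*D)f)$ of domains---are exactly what make the bracketed term, flanked by the resolvents, adjointable and supply the decay in $t$ needed for the integral to converge in operator norm.

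The hard part will be this last analytic point: controlling the unbounded operator $A = D_1 - D_2$ inside the resolvent sandwich. One must check that the equal-domain condition (5) makes the products $D_1 G_2$ and $A G_2$ adjointable, that the bracket has the right operator-norm behaviour in $t$ against the weight $t^{-\frac12}$, and that all manipulations are legitimate for regular rather than bounded operators. This is precisely the technical content established by Pierrot in the Hausdorff case; since the cover $\{U_i\}$ is by Hausdorff open sets and conditions (4)--(5) of Definition \ref{unbdd} are phrased locally over each $U_i$ with cutoffs $f\in C_c(U_i)$, each chart places us verbatim in Pierrot's setting, and the conclusion follows by invoking his result on each $U_i$ and noting that the local estimates are compatible on overlaps through the coherence built into the $\GG$-actions.
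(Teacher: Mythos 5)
Your proposal is correct and follows essentially the same route as the paper, which likewise handles conditions (1)--(3) by citing the nonequivariant Baaj--Julg result and condition (4) by restricting Pierrot's Hausdorff theorem to each chart $U_i$. The extra analytic detail you supply (the integral formula for the bounded transform and the resolvent sandwich $D_1G_1 - D_2G_2 = G_1\big[(1+t)A - D_1AD_2\big]G_2$) is sound and simply unpacks the mechanism that the paper delegates to those two citations.
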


\begin{proof}
	That the first three requirements of Definition \ref{gekkdefn} 
	are met by $(A,{}_\pi E,D(1+D^{2})^{-\frac{1}{2}})$ is a 
	consequence of the corresponding result in the nonequivariant case \cite{baajjulg1}.  That the fourth requirement is met is a consequence of 
	restricting the corresponding result of Pierrot \cite[Th\'{e}or\`{e}me 6]{pierrot} 
	to each of the Hausdorff open subsets $U_{i}$ of $\GG$.
\end{proof}


We now come to the descent map in equivariant $KK$-theory, for which we need to discuss groupoid crossed products.  We will assume for this that $\GG$ comes equipped with a bundle $\Omega^{\frac{1}{2}}\rightarrow \GG$ of leafwise half-densities, as in \cite[Chapter 2.8]{ncg}.  Regard a $C_{0}(X)$-algebra $A$ as the continuous sections vanishing vanishing at infinity $\Gamma_{0}(X;\mathfrak{A})$ of the upper-semicontinuous bundle $\mathfrak{A}\rightarrow X$ of $C^{*}$-algebras whose fibre over $x\in X$ is $A_{x}$ \cite{legall,muhwil}.  Thus a $\GG$-algebra $(A,\alpha)$ can be regarded as the continuous sections vanishing at infinity of the $\GG$-space $\mathfrak{A}$ over $X$, where $\alpha_{u}:A_{s(u)}\rightarrow A_{r(u)}$ determines the action of $\GG$ on the bundle $\mathfrak{A}$.  

Define $\Gamma_{c}(\GG;r^{*}\mathfrak{A}\otimes\Omega^{\frac{1}{2}})$ to be the space of finite linear combinations of sections of the bundle $r^{*}\mathfrak{A}\otimes\Omega^{\frac{1}{2}}\rightarrow \GG$ which have compact support and are continuous in one of the $U_{i}$.  The space $\Gamma_{c}(\GG;r^{*}\mathfrak{A}\otimes\Omega^{\frac{1}{2}})$ is a $*$-algebra equipped with the convolution product
\[
	(f*g)_{u}:=\int_{v\in \GG^{r(u)}} f_{v}\alpha_{v}(g_{v^{-1}u})
\quad
\mbox{and with involution}
\quad
	(f^{*})_{u}:=\alpha_{u}((f_{u^{-1}})^{*}).
\]
The appropriate completion of $\Gamma_{c}(\GG;r^{*}\mathfrak{A}\otimes\Omega^{\frac{1}{2}})$ to a reduced $C^{*}$-algebra $A\rtimes_{r}\GG$ has been given in \cite[Section 3.7]{koshskand}.

In a similar manner, if $A$ is a $\GG$-algebra we can regard any $\GG$-Hilbert $A$-module $E$ as the continuous sections vanishing at infinity of an upper-semicontinuous bundle $\mathfrak{E}\rightarrow X$ whose fibre over $x\in X$ is $E_{x}$.  We define $\Gamma_{c}(\GG;r^{*}\mathfrak{E}\otimes\Omega^{\frac{1}{2}})$ to be the space of finite linear combinations of sections of the bundle $r^{*}\mathfrak{E}\otimes\Omega^{\frac{1}{2}}\rightarrow \GG$ that have compact support and are continuous in one of the $U_{i}$.  The formulae
\[
	\langle\rho^{1},\rho^{2}\rangle^{\GG}_{u}:=\int_{v\in \GG^{r(u)}} \alpha_{v}\langle \rho^{1}_{v^{-1}},\rho^{2}_{v^{-1}u}\rangle
\quad
\mbox{and}
\quad
	(\rho\cdot f)_{u}:=\int_{v\in \GG^{r(u)}}\rho_{v}\alpha_{v}(f_{v^{-1}u})
\]
defined for $\rho^{1},\rho^{2},\rho\in \Gamma_{c}(\GG;r^{*}\mathfrak{E}\otimes\Omega^{\frac{1}{2}})$ and $f\in\Gamma_{c}(\GG;r^{*}\mathfrak{A}\otimes\Omega^{\frac{1}{2}})$ determine an $A\rtimes_{r}\GG$-valued inner product and right action respectively on $\Gamma_{c}(\GG;r^{*}\mathfrak{E}\otimes\Omega^{\frac{1}{2}})$, and we may complete in the norm arising from $\langle\cdot,\cdot\rangle^{\GG}$ to obtain a Hilbert $A\rtimes_{r}\GG$-module which we denote by $E\rtimes_{r}\GG$.  If $T$ is an $A$-linear operator on $E$, we denote by $\mathfrak{dom}(T)$ the bundle over $X$ whose fibre over $x\in X$ is $\Dom(T)\otimes_{A}A_{x}$.  Then as in \cite[D\'{e}finition 2, Proposition 3]{pierrot}
we define $r^{*}(T)$ on $\Gamma_{c}(\GG;r^{*}\mathfrak{dom}(T)\otimes\Omega^{\frac{1}{2}})$ by
\[
	(r^{*}(T)\rho)_{u}:=T_{r(u)}\rho_{u}.
\]
If $T\in\LL(E)$ one can use the norm of $T$ to bound that of $r^{*}(T)$, and then one can use $T^{*}$ to show that $r^{*}(T)\in\LL(E\rtimes_{r}\GG)$.

\begin{lemma}\label{anal}
	For any densely defined $A$-linear operator $T:\Dom(T)\rightarrow E$, we have $r^{*}(T^{*})\subset  r^{*}(T)^{*}$.  Moreover  
	$\overline{r^{*}(T^{*})} = r^{*}(T)^{*}$.
\end{lemma}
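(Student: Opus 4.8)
The plan is to treat the two assertions separately, obtaining the inclusion $r^{*}(T^{*})\subseteq r^{*}(T)^{*}$ by a direct fibrewise computation and then upgrading it to the equality $\overline{r^{*}(T^{*})} = r^{*}(T)^{*}$ by analysing the adjoint fibrewise.

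For the inclusion, I would fix $\rho$ in the domain $\Gamma_{c}(\GG;r^{*}\mathfrak{dom}(T)\otimes\Omega^{\frac12})$ of $r^{*}(T)$ and $\sigma$ in the domain $\Gamma_{c}(\GG;r^{*}\mathfrak{dom}(T^{*})\otimes\Omega^{\frac12})$ of $r^{*}(T^{*})$, and compute the $A\rtimes_{r}\GG$-valued inner product $\langle r^{*}(T)\rho,\sigma\rangle^{\GG}$ directly from its defining formula. At a point $u$ the integrand is $\alpha_{v}\langle(r^{*}(T)\rho)_{v^{-1}},\sigma_{v^{-1}u}\rangle$; since $r(v^{-1}) = r(v^{-1}u) = s(v)$, both $\rho_{v^{-1}}$ and $\sigma_{v^{-1}u}$ lie in the single fibre $E_{s(v)}$, with $\rho_{v^{-1}}\in\Dom(T_{s(v)})$ and $\sigma_{v^{-1}u}\in\Dom(T^{*}_{s(v)})$. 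The fibrewise adjoint relation $\langle T_{s(v)}\rho_{v^{-1}},\sigma_{v^{-1}u}\rangle = \langle\rho_{v^{-1}},T^{*}_{s(v)}\sigma_{v^{-1}u}\rangle$ then moves $T$ across the inner product, and recognising $T^{*}_{s(v)}\sigma_{v^{-1}u} = (r^{*}(T^{*})\sigma)_{v^{-1}u}$ gives $\langle r^{*}(T)\rho,\sigma\rangle^{\GG} = \langle\rho,r^{*}(T^{*})\sigma\rangle^{\GG}$. This is precisely the statement that $\sigma\in\Dom(r^{*}(T)^{*})$ with $r^{*}(T)^{*}\sigma = r^{*}(T^{*})\sigma$, so $r^{*}(T^{*})\subseteq r^{*}(T)^{*}$.

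For the equality, one inclusion is free: the adjoint $r^{*}(T)^{*}$ is closed, so the inclusion just proved yields $\overline{r^{*}(T^{*})}\subseteq r^{*}(T)^{*}$. The remaining task is to show $\Dom(r^{*}(T)^{*})\subseteq\Dom(\overline{r^{*}(T^{*})})$. My plan here is first to identify $r^{*}(T)^{*}$ fibrewise, i.e.\ to prove that $\sigma\in\Dom(r^{*}(T)^{*})$ forces $\sigma_{u}\in\Dom(T^{*}_{r(u)})$ for each $u$ with $(r^{*}(T)^{*}\sigma)_{u} = T^{*}_{r(u)}\sigma_{u}$, and then to approximate such a $\sigma$ in the graph norm by sections in $\Gamma_{c}(\GG;r^{*}\mathfrak{dom}(T^{*})\otimes\Omega^{\frac12})$, which exhibits $\sigma$ as an element of $\Dom(\overline{r^{*}(T^{*})})$. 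To extract the fibrewise information I would test the identity $\langle r^{*}(T)\rho,\sigma\rangle^{\GG} = \langle\rho,\tau\rangle^{\GG}$, where $\tau = r^{*}(T)^{*}\sigma$, against $\rho$ concentrated in a single Hausdorff chart $U_{i}$ near a chosen point, and use nondegeneracy of the fibre inner products together with the fact that the source and range fibres $\GG^{x}$ of the holonomy groupoid are Hausdorff manifolds. On these Hausdorff fibres $r^{*}(T)$ is the ampliation of $T_{x}$ and the desired description of its adjoint is exactly the content of Pierrot's Hausdorff result \cite[Proposition 3]{pierrot}; the residual density statement for domain-valued sections then closes the argument.

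The main obstacle will be this fibrewise characterisation of $r^{*}(T)^{*}$. The natural approximation tools, namely right or left convolution by the groupoid algebra, do not preserve the fibrewise domains of $T^{*}$, because $T$ is not assumed $\GG$-equivariant, so $\alpha_{v}$ need not carry $\Dom(T^{*}_{s(v)})$ into $\Dom(T^{*}_{r(v)})$; and pointwise multiplication by functions on $\GG$, while commuting with $r^{*}(T)$, is not adjointable for the convolution module structure. Consequently the localisation to the charts $U_{i}$ must be carried out with care, reducing to Pierrot's Hausdorff computation on the fibres rather than attempting to split $\sigma$ by a partition of unity on $\GG$.
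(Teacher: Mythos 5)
Your treatment of the inclusion $r^{*}(T^{*})\subseteq r^{*}(T)^{*}$ is correct and is essentially the paper's own computation: after reducing to sections supported in a single Hausdorff chart, one moves $T$ across the fibrewise pairing in $\langle r^{*}(T)\rho,\sigma\rangle^{\GG}$, using $r(v^{-1})=r(v^{-1}u)=s(v)$, and recognises the result as $\langle\rho,r^{*}(T^{*})\sigma\rangle^{\GG}$. The one point you pass over, which the paper checks explicitly, is that $u\mapsto T^{*}_{r(u)}\sigma_{u}$ is again continuous with compact support, so that $r^{*}(T^{*})\sigma$ genuinely lies in $E\rtimes_{r}\GG$ and the adjoint relation makes sense.

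The second half of the proposal has a genuine gap, and it sits exactly at the obstacle you flag without resolving it. First, for a general $\sigma\in\Dom(r^{*}(T)^{*})\subseteq E\rtimes_{r}\GG$ the symbol $\sigma_{u}$ has no a priori meaning: elements of the completion are not sections, so before any ``fibrewise identification'' of $r^{*}(T)^{*}$ you would need an injective evaluation map from $E\rtimes_{r}\GG$ to fibrewise data, continuous enough to test against bump sections --- a nontrivial point in the locally Hausdorff setting which you do not supply, and which your appeal to \cite{pierrot} does not obviously cover (the paper invokes Pierrot only for the definition of $r^{*}(T)$ and, via his Th\'{e}or\`{e}me 6, for the bounded transform, not for a characterisation of adjoints). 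Second, even granting the fibrewise description, your closing clause --- ``the residual density statement for domain-valued sections then closes the argument'' --- is the whole difficulty: one must approximate $\sigma$ in the \emph{graph} norm by elements of $\Gamma_{c}(\GG;r^{*}\mathfrak{dom}(T^{*})\otimes\Omega^{\frac{1}{2}})$, and, as you yourself observe, convolution and multiplication operators are unusable for this because $T$ is not assumed $\GG$-equivariant; no substitute mechanism is proposed. The paper's proof avoids both problems by reversing the order of the argument: it never evaluates $\xi\in\Dom(r^{*}(T)^{*})$ pointwise, but picks any sequence $\xi^{n}\in\Gamma_{c}(\GG;r^{*}\mathfrak{dom}(T^{*})\otimes\Omega^{\frac{1}{2}})$ converging to $\xi$ merely in module norm (plain density), notes that $\langle\xi^{n},r^{*}(T)\rho\rangle^{\GG}=\langle r^{*}(T^{*})\xi^{n},\rho\rangle^{\GG}$ converges by continuity of the inner product, and then localises with bump sections $\rho$ of shrinking support about the points $v^{-1}u$ to extract convergence of $T^{*}_{s(v)}\xi^{n}_{v^{-1}}$, hence of $r^{*}(T^{*})\xi^{n}$ in $E\rtimes_{r}\GG$; this is exactly graph-norm convergence $\xi^{n}\rightarrow\xi$, so $\xi\in\Dom(\overline{r^{*}(T^{*})})$. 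To repair your route you would either have to prove the evaluation and graph-density claims you defer, or adopt the paper's order of quantifiers: approximate in module norm first, then use the adjoint identity together with localisation to upgrade to graph-norm convergence.
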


\begin{proof}
	Fix $\xi\in\Dom(r^{*}(T^{*})) =\Gamma_{c}(\GG;r^{*}\mathfrak{dom}(T^{*})\otimes\Omega^{\frac{1}{2}})$, and assume without loss of generality that $\xi$ has compact support in some Hausdorff open subset $U_{i}$ of $\GG$.  For each $u\in \GG$, use the fact that $\xi_{u}\in\mathfrak{dom}(T^{*})_{r(u)}\otimes\Omega^{\frac{1}{2}}_{u}$ to define a section $\eta$ of $r^{*}\mathfrak{E}\otimes\Omega^{\frac{1}{2}}\rightarrow \GG$ by
	\[
	\eta_{u}:=T^{*}_{r(u)}\xi_{u}.
	\]
	Since $\xi$ is continuous with compact support in $U_{i}$ so too is $\eta$, thus $\eta\in\Gamma_{c}(\GG,r^{*}\mathfrak{E}\otimes\Omega^{\frac{1}{2}})$.  For any $\rho\in\Dom(r^{*}(T)) = \Gamma_{c}(\GG;r^{*}\mathfrak{dom}(T)\otimes\Omega^{\frac{1}{2}})$ we can then calculate
	\[
	\langle\xi,r^{*}(T)\rho\rangle^{\GG}_{u} = \int_{v\in \GG^{r(u)}}\alpha_{v}(\langle\xi_{v^{-1}},T_{s(v)}\rho_{v^{-1}u}\rangle) = \int_{v\in \GG^{r(u)}}\alpha_{v}(\langle T^{*}_{s(v)}\xi_{v^{-1}},\rho_{v^{-1}u}\rangle) = \langle\eta,\rho\rangle^{\GG}_{u}
	\]
	for all $u\in \GG$, so that $\xi\in\Dom(r^{*}(T)^{*})$.  The above calculation also shows that $r^{*}(T)^{*}\xi = \eta = r^{*}(T^{*})\xi$, so that we indeed have $r^{*}(T^{*})\subset r^{*}(T)^{*}$.  
	
	Fix $\xi\in\Dom(r^{*}(T)^{*})$.
	We show that $\xi\in\overline{r^{*}(T^{*})}$.  Let $\{\xi^{n}\}_{n\in\NB}\subset\Gamma_{c}(\GG;r^{*}\mathfrak{dom}(T^*)\otimes\Omega^{\frac{1}{2}})$ 
	be a sequence converging in $E\rtimes_{r}\GG$ to $\xi$.  
	Then the sequence $\{\langle\xi^{n},r^{*}(T)\rho\rangle^{\GG}\}_{n\in\NB}$ 
	of elements of 
	$\Gamma_{c}(\GG;r^{*}\mathfrak{A}\otimes\Omega^{\frac{1}{2}})$ 
	defined for $u\in \GG$ by
	\begin{equation}
	\label{bumps}
	\langle\xi^{n},r^{*}(T)\rho\rangle_{u}^{\GG} = \int_{v\in \GG^{r(u)}}\alpha_{v}(\langle\xi^{n}_{v^{-1}},T_{s(v)}\rho_{v^{-1}u}\rangle) = \int_{v\in \GG^{r(u)}}\alpha_{v}(\langle T_{s(v)}^*\xi^{n}_{v^{-1}},\rho_{v^{-1}u}\rangle)
	\end{equation}
	converges in $A\rtimes_{r}\GG$ for all 
	$\rho\in\Gamma_{c}(\GG;r^{*}\mathfrak{dom}(T)\otimes\Omega^{\frac{1}{2}})$.
	 For each $v\in \GG^{r(u)}$ one can on the right hand side of \eqref{bumps} 
	 take bump functions $\rho$ with support of decreasing radius 
	 about $v^{-1}u$ to show that we have convergence of 
	 $\{(r^{*}(T^{*})\xi^{n})_{v^{-1}} = T_{s(v)}^*\xi^{n}_{v^{-1}}\}_{n\in\NB}$ 
	 to an element of $E_{s(v)}$, and doing this for all $v\in \GG^{r(u)}$ 
	 and all $u\in \GG$ shows that in fact $\{r^{*}(T^{*})\xi^{n}\}_{n\in\NB}$ 
	 converges in $E\rtimes_{r}\GG$, implying that $\xi^{n}\rightarrow \xi$ 
	 in the graph norm on $\Dom(r^{*}(T^{*}))$ as claimed.
\end{proof}

Finally, we observe that if $A$ and $B$ are $\GG$-algebras, and if $(E,W)$ is a $\GG$-Hilbert $B$-module with an equivariant representation $\pi:A\rightarrow\LL(E)$, then the formula
\[
	((\pi\rtimes_{r} \GG)(f)\rho)_{u}
	:=\int_{v\in \GG^{r(u)}} \pi(f_{v})W_{v}(\rho_{v^{-1}u})
\]
defined for $f\in\Gamma_{c}(\GG;r^{*}\mathfrak{A}\otimes\Omega^{\frac{1}{2}})$ and $\rho\in\Gamma_{c}(\GG;r^{*}\mathfrak{E}\otimes\Omega^{\frac{1}{2}})$ determines a representation $\pi\rtimes_{r}\GG:A\rtimes_{r}\GG\rightarrow \LL(E\rtimes_{r}\GG)$.

\begin{prop}\label{descent}
	Let $A$ and $B$ be $\GG$-algebras, 
	and let $(\AA,{}_{\pi}E,D)$ be a $\GG$-equivariant unbounded 
	Kasparov $A$-$B$-module.  Let $\widetilde{\AA}$ denote the 
	bundle of $*$-algebras over $X$ whose fibre over 
	$x\in X$ is $\AA_{x}$.  Then
	\[
		(\Gamma_{c}(\GG;r^{*}\widetilde{\AA}\otimes\Omega^{\frac{1}{2}}),
		{}_{\pi\rtimes_{r}\GG}E\rtimes_{r}\GG,r^{*}(D))
	\] 
	is an unbounded Kasparov $A\rtimes_{r}\GG$-$B\rtimes_{r}\GG$-module.
\end{prop}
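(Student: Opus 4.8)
The plan is to verify directly that the given triple satisfies the three axioms of an (ordinary, non-equivariant) unbounded Kasparov module for the pair $A\rtimes_{r}\GG$, $B\rtimes_{r}\GG$, reducing each analytic point to the corresponding result of Pierrot for Hausdorff groupoids \cite[Th\'{e}or\`{e}me 6]{pierrot} by restricting to the Hausdorff charts $U_{i}$ of the cover of $\GG$ and patching. The algebraic data are already in place: $E\rtimes_{r}\GG$ is a $\ZB_{2}$-graded Hilbert $B\rtimes_{r}\GG$-module, $\pi\rtimes_{r}\GG$ is a representation of $A\rtimes_{r}\GG$, and $r^{*}(D)$ is odd and densely defined on $\Gamma_{c}(\GG;r^{*}\mathfrak{dom}(D)\otimes\Omega^{\frac{1}{2}})$. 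The first task is to establish that $r^{*}(D)$ is self-adjoint, regular, and commutes with the right $B\rtimes_{r}\GG$-action. Self-adjointness of the closure is immediate from Lemma \ref{anal}: since $D = D^{*}$, that lemma gives $r^{*}(D)^{*} = \overline{r^{*}(D^{*})} = \overline{r^{*}(D)}$, so $\overline{r^{*}(D)}$ is self-adjoint. Commutation with the right action is a direct computation from the convolution formulae.

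The heart of the argument is the commutator and domain-preservation conditions. For $f\in\Gamma_{c}(\GG;r^{*}\widetilde{\AA}\otimes\Omega^{\frac{1}{2}})$ and $\rho$ in the core, a short computation with the convolution formula for $\pi\rtimes_{r}\GG$ and the defining formula $(r^{*}(D)\rho)_{u} = D_{r(u)}\rho_{u}$ yields, using $r(v^{-1}u) = s(v)$ for $v\in\GG^{r(u)}$,
\[
\big([r^{*}(D),(\pi\rtimes_{r}\GG)(f)]\rho\big)_{u} = \int_{v\in\GG^{r(u)}}[D_{r(u)},\pi(f_{v})]\,W_{v}\rho_{v^{-1}u} + \int_{v\in\GG^{r(u)}}\pi(f_{v})\big(D_{r(u)}W_{v} - W_{v}D_{s(v)}\big)\rho_{v^{-1}u}.
\]
The first integral is convolution by the section $v\mapsto[D,\pi(f_{v})]$, which is bounded by axiom (2) of Definition \ref{unbdd}; the second is convolution against $f$ twisted by the operator $r^{*}D - W\circ s^{*}D\circ W^{-1}$, whose compression by $f$ is bounded by axiom (4). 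Both resulting convolution operators are adjointable on $E\rtimes_{r}\GG$, so $[r^{*}(D),(\pi\rtimes_{r}\GG)(f)]$ extends to $\LL(E\rtimes_{r}\GG)$. The same computation, together with axioms (1) and (5) of Definition \ref{unbdd} ensuring that convolution by $f$ preserves $\Gamma_{c}(\GG;r^{*}\mathfrak{dom}(D)\otimes\Omega^{\frac{1}{2}})$, shows $(\pi\rtimes_{r}\GG)(f)$ maps $\Dom(r^{*}(D))$ into itself, giving condition (1).

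Finally, for the compactness condition I would show $(\pi\rtimes_{r}\GG)(f)(1+r^{*}(D)^{2})^{-\frac{1}{2}}\in\KK(E\rtimes_{r}\GG)$. On each Hausdorff chart $U_{i}$ this is precisely the content of Pierrot's descent theorem, since axiom (3) of Definition \ref{unbdd} supplies the fibrewise compactness of $\pi(a)(1+D^{2})^{-\frac{1}{2}}$; a partition-of-unity argument subordinate to $\{U_{i}\}$ then assembles the local statements into the global one. The main obstacle I expect is not this bounded/commutator bookkeeping but the analytic control of $r^{*}(D)$ in the \emph{locally Hausdorff} setting: establishing regularity, and that $(r^{*}(D)\pm i)$ has dense range, cannot be read off from a single application of Pierrot's Hausdorff theorem and requires the chart-compatibility axioms (4) and (5) to glue the Hausdorff-local resolvents consistently. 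Verifying this patching carefully — in particular that the local cores $\Gamma_{c}(U_{i};r^{*}\mathfrak{dom}(D)\otimes\Omega^{\frac{1}{2}})$ assemble to a core for a single regular self-adjoint operator — is the technical crux on which the whole proposition rests.
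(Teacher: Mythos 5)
Your proposal tracks the paper's proof closely on three of the four analytic points: self-adjointness via Lemma \ref{anal} is exactly the paper's argument, your commutator computation splitting $[r^{*}(D),(\pi\rtimes_{r}\GG)(f)]$ into a $[D,\pi(f_{v})]$ term and a $\pi(f_{v})(D_{r(v)}-W_{v}D_{s(v)}W_{v^{-1}})$ term controlled by Properties 2 and 4 of Definition \ref{unbdd} is the same calculation, and your compactness strategy is close in spirit (though the paper does not invoke Pierrot chartwise — note the $U_{i}$ are not subgroupoids, so ``Pierrot's theorem on each chart'' is not literally meaningful; instead the paper shows directly that $(1+r^{*}(D)^{2})^{-\frac{1}{2}}(\pi\rtimes_{r}\GG)(f^{*})$ is an element of $\Gamma_{c}(\GG;r^{*}\KK(E)\otimes\Omega^{\frac{1}{2}})$ using Property 3 fibrewise, and then approximates such a compactly supported $\KK(E)$-valued kernel by finite-rank operators following \cite[Page 172]{KasparovEqvar}; your partition-of-unity reduction to kernels supported in single charts would in any case funnel into that same approximation argument).

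The genuine gap is at precisely the step you defer as the ``technical crux'': regularity of $r^{*}(D)$ is never proved, and the strategy you sketch for it — gluing Hausdorff-local resolvents via axioms (4) and (5) and checking that local cores assemble into a core for one regular operator — is misdirected. No gluing is needed, and axioms (4) and (5) play no role in regularity at all (they are consumed entirely by the commutator and domain conditions). The operator $r^{*}(D)$ is defined from the outset on a single \emph{global} core, namely $\Gamma_{c}(\GG;r^{*}\mathfrak{dom}(D)\otimes\Omega^{\frac{1}{2}})$, whose elements are by definition finite sums of sections compactly supported and continuous in a single chart, so there is no assembly problem. On this core one has $((1+r^{*}(D)^{2})\rho)_{u} = (1+D_{r(u)}^{2})\rho_{u}$, i.e.\ $1+r^{*}(D)^{2}$ acts fibrewise; hence its range contains $\Gamma_{c}(\GG;r^{*}\mathfrak{range}(1+D^{2})\otimes\Omega^{\frac{1}{2}})$, where $\mathfrak{range}(1+D^{2})$ has fibre $\Range(1+D^{2})\otimes_{A}A_{x}$. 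Regularity of $D$ makes each fibre dense in $E_{x}$, so this space of sections is dense in $E\rtimes_{r}\GG$, and density of the range of $1+r^{*}(D)^{2}$ gives regularity of $r^{*}(D)$ in one stroke, entirely chart-free. So the step you predicted would be the hardest is in fact the easiest, but as written your proof is incomplete there, and carried out along the resolvent-patching route you propose it would at best reprove this density statement in an unnecessarily convoluted way.
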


\begin{proof}
	Since $D$ is odd for the grading of $E$, $r^{*}(D)$ is 
	odd for the induced grading of $E\rtimes_{r}\GG$.  Symmetry of $D$ gives symmetry of $r^{*}(D)$, so without loss of generality we may assume that $r^{*}(D)$ is closed.  Self-adjointness of $r^{*}(D)$ is then a consequence of the self-adjointness of $D$ together with Lemma \ref{anal}.
	
	Regularity of $r^{*}(D)$ is a  consequence of that of $D$.  Indeed, for any $\rho\in\Gamma_{c}(\GG;r^{*}\mathfrak{dom}(D)\otimes\Omega^{\frac{1}{2}})$ we have
	\[
	((1+r^{*}(D)^{2})\rho)_{u} = (1_{r(u)}+D_{r(u)}^{2})\rho_{u}.
	\]
	Hence the range of the operator
	$(1+r^{*}(D)^{2})$ when restricted to $\Gamma_{c}(\GG;r^{*}\mathfrak{dom}(D)\otimes\Omega^{\frac{1}{2}})$ is $\Gamma_{c}(\GG;r^{*}\mathfrak{range}(1+D^{2})\otimes\Omega^{\frac{1}{2}})$, where $\mathfrak{range}(1+D^{2})$ denotes the bundle over $X$ whose fibre over $x\in X$ is $\Range(1+D^{2})\otimes_{A} A_{x}$, which by regularity of $D$ is dense in $E_{x} = E\otimes_{A}A_{x}$.  Thus the range of $(1+r^{*}(D)^{2})$ contains the dense subspace $\Gamma_{c}(\GG;r^{*}\mathfrak{range}(1+D^{2})\otimes\Omega^{\frac{1}{2}})$ of $E\rtimes_{r}\GG$, and it follows that $r^{*}(D)$ is regular.
	
	Regarding commutators, a simple calculation tells us that for any $u\in\GG$, the vector $([r^{*}(D),(\pi\rtimes_{r}\GG)(f)]\rho)_{u}$ is equal to
	\begin{align*}
	 \int_{v\in \GG^{r(u)}}\bigg([D_{r(u)},\pi(f_{v})] +\pi(f_{v})\big(D_{r(v)}-W_{v}\circ D_{s(v)}\circ W_{v^{-1}}\big)\bigg)(W_{v}\rho_{v^{-1}u})
	\end{align*}
	for all 
	$\rho\in\Gamma_{c}(\GG;r^{*}\mathfrak{dom}(T)\otimes\Omega^{\frac{1}{2}})$,
	so Properties 2 and 4 in Definition \ref{unbdd} imply
	that the operator $[r^{*}(D),(\pi\rtimes_{r}\GG)(f)]$ extends to an element of $\LL(E\rtimes_{r}\GG)$, with adjoint $[r^{*}(D),(\pi\rtimes_{r}\GG)(f^{*})]$.
	
	The only thing that remains to check is compactness 
	of $(\pi\rtimes_{r}\GG)(f)(1+r^{*}(D)^{2})^{-\frac{1}{2}}$ for $f\in\Gamma_{c}(\GG;r^{*}\tilde{\AA}\otimes\Omega^{\frac{1}{2}})$.  For any $\rho\in\Gamma_{c}(\GG;r^{*}\mathfrak{E}\otimes\Omega^{\frac{1}{2}})$ the definition of $r^*(D)$ gives
	\begin{align*}
	((1+r^{*}(D)^{2})^{-\frac{1}{2}}(\pi\rtimes_{r}\GG)(f^{*})\rho)_{u} =& (1+D_{r(u)}^{2})^{-\frac{1}{2}}\int_{v\in \GG^{r(u)}}\pi((f)^{*}_{v})W_{v}(\rho_{v^{-1}u})\\ =&\int_{v\in \GG^{r(u)}}(1+D_{r(v)}^{2})^{-\frac{1}{2}}\pi((f)^{*}_{v})W_{v}(\rho_{v^{-1}u}),
	\end{align*}
	and since $(1+D_{r(v)}^{2})^{-\frac{1}{2}}\pi((f)^{*}_{v})\in\KK(E)_{r(v)}$ for all $v\in \GG^{r(u)}$ by Property 3 in Definition \ref{unbdd}, it follows that $(1+r^{*}(D)^{2})^{-\frac{1}{2}}(\pi\rtimes_{r}\GG)(f^{*})$ is an element of $\Gamma_{c}(\GG;r^{*}\KK(E)\otimes\Omega^{\frac{1}{2}})$. 
	A similar argument to the one used in \cite[Page 172]{KasparovEqvar} 
	then tells us that $(1+r^{*}(D))^{-\frac{1}{2}}(\pi\rtimes_{r}\GG)(f^{*})$ 
	can be approximated by finite rank operators
	on $E\rtimes_r\GG$ so
	is an element of $\KK(E\rtimes_{r}\GG)$, and hence so 
	too is its adjoint $(\pi\rtimes_{r}\GG)(f)(1+r^{*}(D)^{2})^{-\frac{1}{2}}$.
\end{proof}

Let us remark finally that if $Y$ is a locally compact Hausdorff $\GG$-space, with corresponding bundle $C_{0}(\mathfrak{Y})\rightarrow X$ whose fibre over $x\in X$ is $C_{0}(Y_{x})$, then we have an inclusion $\Gamma_{c}(Y\rtimes \GG;\Omega^{\frac{1}{2}})\ni f\mapsto\tilde{f}\in\Gamma_{c}(\GG;r^{*}C_{0}(\mathfrak{Y})\otimes\Omega^{\frac{1}{2}})$ defined by
\[
\tilde{f}_{u}(y):=f(y,u).
\]
For ease of notation we will usually just refer to $\tilde{f}$ as $f$.  By density of $C_{c}(Y_{x})$ in $C_{0}(Y_{x})$ for each $x\in X$, the subalgebra $\Gamma_{c}(Y\rtimes \GG;\Omega^{\frac{1}{2}})$ is dense in $C_{0}(Y)\rtimes_{r}\GG$.  We will use this fact in the construction of our Godbillon-Vey spectral triple.

\subsection{Semifinite spectral triples}

One of the defining features of a spectral triple $(\AA,\HH,\DD)$ 
is that the operators $a(1+\DD^2)^{-\frac{1}{2}}$ are contained 
in the compact operators $\KK(\HH)$ for all $a\in\AA$.  These 
compact operators come equipped with a 
trace $\Tr$, which is used to measure the rank of projections
that appear in the definition of the index, and subsequent
index formulae \cite{CM,higsonlocind}.


Semifinite spectral triples are a generalisation of spectral triples for which the rank of projections
is measured by a different trace.
More precisely we require a  faithful normal semifinite trace $\tau$
on a semifinite von Neumann algebra $\NN\subset \BB(\HH)$.
We denote by $\KK_{\tau}(\NN)$ the norm-closed 
ideal in $\NN$ generated by projections of finite $\tau$ -trace, 
and refer to $\KK_{\tau}(\NN)$ as the ideal 
of \emph{$\tau$-compact operators}, \cite{fackkosaki}.
 
%
%
%
%

\begin{defn}
Let $(\NN,\tau)$ be a semifinite von Neumann algebra, 
regarded as an algebra of operators on a Hilbert space $\HH$.  
A \textbf{semifinite spectral triple relative to $(\NN,\tau)$} 
is a triple $(\AA,{}_\pi\HH,\DD)$ consisting of a 
$*$-algebra $\AA$ represented in $\NN$ by $\pi:\AA\to\NN\subset\BB(\HH)$, 
and a densely-defined, unbounded, self-adjoint 
operator $\DD$ affiliated to $\NN$ such that
\begin{enumerate}
    \item $\pi(a)\Dom(\DD)\subset\Dom(\DD)$ so that $[\DD,\pi(a)]$ 
    is densely defined, and moreover extends to a bounded 
    operator on $\HH$ for all $a\in \AA$,
    \item $\pi(a)(1+\DD^2)^{-\frac{1}{2}}\in\KK_{\tau}(\NN)$ 
    for all $a\in\AA$.
\end{enumerate}
We say that $(\AA,{}_\pi\HH,\DD)$ is \textbf{even} if $\AA$ is 
even and $\DD$ is odd for some $\ZB_2$-grading on $\HH$, and otherwise 
we call $(\AA,{}_\pi\HH,\DD)$ \textbf{odd}.
\end{defn}

Connes' original notion of spectral triple defines a 
subclass of semifinite spectral triples, for which 
$(\NN,\tau) = (\BB(\HH),\Tr)$.  Just as  the bounded transform
of a 
spectral triple $(\AA,{}_\pi\HH,\DD)$ defines 
a Fredholm module (over 
the $C^{*}$-completion $A$ of $\AA$), and hence a 
class in $KK_{*}(A,\CB)$, semifinite spectral triples 
have a close relationship with $KK$-theory.  

To 
see this, we first suppose that  $B$ is a $C^{*}$-algebra, 
$X_{B}$ is a Hilbert $B$-module with 
inner product $\langle\cdot,\cdot\rangle_B$, and $\tau$ is a faithful 
norm lower semicontinuous semifinite trace on $B$. We can form
the GNS space $L^2(B,\tau)$, or $L^2(X,\tau)$ with inner product
$( x|y)=\tau(\langle x,y\rangle_B)$. These two Hilbert spaces
are related by $X\otimes_B L^2(B,\tau)\cong L^2(X,\tau)$.

Then by results in \cite{LN}, we obtain a faithful 
normal semifinite trace $\Tr_{\tau}$, called the \emph{dual trace}, 
on the weak closure $\NN=\End_B(X)''\subset \BB(L^2(X_B,\tau))$ 
of the adjointable $B$-linear 
operators on $X_{B}$. The functional $\Tr_\tau$ satisfies
\[
    \Tr_{\tau}(\Theta_{\xi,\eta}):=\tau(\langle\eta,\xi\rangle_{B}).
\]

\begin{prop}
\label{sem1}
\label{sfst}
Let $(\AA,{}_\pi X_{B},\DD)$ be an even (resp. odd) 
unbounded Kasparov $A$-$B$ module, and 
suppose that $\tau$ is a faithful norm lower semicontinuous 
semifinite 
trace on $B$.  Let $(\NN,\Tr_{\tau})$ be the semifinite von 
Neumann algebra obtained from $X_{B}$ and $\tau$ as above.  
Then (with a slight abuse of notation)
$$
(\AA,{}_{\pi\hotimes1} X_{B}\hotimes_{B}L^{2}(B,\tau),\DD\hotimes1)
=(\AA,{}_\pi L^{2}(X_B,\tau),\DD)
$$ 
is an even (resp. odd) semifinite spectral triple 
relative to $(\NN,\Tr_{\tau})$.
\end{prop}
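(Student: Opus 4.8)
The plan is to verify the two defining conditions of a semifinite spectral triple relative to $(\NN,\Tr_\tau)$ by transporting the corresponding Kasparov-module properties across the isomorphism $X_B\hotimes_B L^2(B,\tau)\cong L^2(X_B,\tau)$. The key structural input is the identification of where $\DD\hotimes 1$ lives: since $(\AA,{}_\pi X_B,\DD)$ is an unbounded Kasparov module, $\DD$ is self-adjoint and regular and commutes with the right $B$-action, and $\pi(\AA)$ acts by adjointable $B$-linear operators. Tensoring with the identity on $L^2(B,\tau)$ produces operators that are $B$-linear, hence affiliated to $\NN=\End_B(X)''$; this is what makes $(\NN,\Tr_\tau)$ the natural semifinite algebra to work relative to. **First I would** record that $\pi\hotimes 1$ maps $\AA$ into $\NN$ and that $\DD\hotimes 1$ is self-adjoint on $L^2(X_B,\tau)$ with the correct domain, using that tensoring a self-adjoint regular operator on a Hilbert module with the identity on the GNS module yields a self-adjoint operator (this is standard from the internal-Kasparov-product machinery, e.g. Baaj--Julg).

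**Next I would** establish the grading/parity statement, which is immediate: evenness of $\AA$ and oddness of $\DD$ pass to $\pi\hotimes 1$ and $\DD\hotimes 1$ under the induced $\ZB_2$-grading on $L^2(X_B,\tau)$, so the even/odd dichotomy is inherited verbatim. **Then** condition (1) of the semifinite spectral triple definition follows directly: $\pi(a)\Dom(\DD)\subset\Dom(\DD)$ and the boundedness of $[\DD,\pi(a)]$ are hypotheses of the unbounded Kasparov module, and since $\pi(a)$ and $\DD$ commute with the right $B$-action, the commutator $[\DD\hotimes 1,\,\pi(a)\hotimes 1]=[\DD,\pi(a)]\hotimes 1$ extends to a bounded operator on $L^2(X_B,\tau)$ with norm bounded by $\|[\DD,\pi(a)]\|$. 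The domain-preservation $\pi(a)\hotimes 1\,\Dom(\DD\hotimes 1)\subset\Dom(\DD\hotimes 1)$ likewise transports fibrewise.

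**The main obstacle — and the heart of the proof — is condition (2)**, the upgrade from $B$-compactness to $\tau$-compactness. Here I would argue that for $a\in\AA$ the Kasparov-module axiom gives $\pi(a)(1+\DD^2)^{-\frac12}\in\KK(X_B)$, the compact adjointable operators, and that $(1+(\DD\hotimes 1)^2)^{-\frac12}=(1+\DD^2)^{-\frac12}\hotimes 1$, so $(\pi(a)\hotimes 1)(1+(\DD\hotimes1)^2)^{-\frac12}=\big(\pi(a)(1+\DD^2)^{-\frac12}\big)\hotimes 1$. The crux is then to show that the image of $\KK(X_B)$ under $T\mapsto T\hotimes 1$ lands inside $\KK_\tau(\NN)$. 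It suffices to check this on the generating finite-rank operators $\Theta_{\xi,\eta}$, for which the defining formula $\Tr_\tau(\Theta_{\xi,\eta})=\tau(\langle\eta,\xi\rangle_B)$ from \cite{LN} shows that rank-one operators have finite dual trace; passing to norm limits of finite sums of such rank-ones (which is exactly $\KK(X_B)$) and using that $\KK_\tau(\NN)$ is the norm-closed ideal generated by finite-$\tau$-trace projections then places $\pi(a)(1+\DD^2)^{-\frac12}\hotimes 1$ in $\KK_\tau(\NN)$. I would lean on \cite{LN} for the precise statement that $T\mapsto T\hotimes 1$ carries $\KK(X_B)$ into $\KK_\tau(\NN)$, since the careful point is the norm-density argument together with the lower semicontinuity and semifiniteness of $\tau$; once that containment is in hand, the two conditions are verified and the triple is a semifinite spectral triple of the asserted parity.
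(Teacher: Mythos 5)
Your proposal is correct and takes essentially the same route as the paper's (terse) proof: parity and the commutator bound are inherited verbatim from the unbounded Kasparov module, affiliation of $\DD\hotimes 1$ to $\NN$ follows from $B$-linearity (the paper phrases this via the commutant $\NN' = B''$, with unitaries in $B''$ preserving $\Dom(\DD\hotimes 1)$), and $\tau$-compactness reduces to the containment $\KK(X_{B})\subset\KK_{\tau}(\NN)$, which the paper simply records as holding ``by construction'' with the same appeal to \cite{LN} that you make explicit. One small caution on your sketch of that containment: $\Tr_{\tau}(\Theta_{\xi,\eta}) = \tau(\langle\eta,\xi\rangle_{B})$ is finite only for $\xi,\eta$ with $\tau$-finite inner products, since $\tau$ is merely semifinite, but as you correctly defer the density and lower-semicontinuity argument to \cite{LN}, this does not affect the proof.
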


\begin{proof}
Clearly $\AA\subset\NN$, and the commutant of
$\NN$ is just $B''$. Since $\DD$ is 
$B$-linear, every unitary in $B''$
preserves the domain of
$\DD\hotimes1$, whence $\DD\hotimes 1$ 
is affiliated to $\NN$.  That $[\DD\hotimes 1,\pi(a)\hotimes 1]$ 
is bounded for all $a\in\AA$ is a consequence 
of the corresponding fact for the Kasparov module $(\AA,{}_\pi X_{B},\DD)$, 
and that $(\pi(a)\hotimes1)(1+\DD\hotimes1^2)^{-\frac{1}{2}}$ is $\tau$-compact 
is true because the algebra $\KK(X_{B})$ is 
contained in $\KK_{\tau}(\NN)$ by construction.
\end{proof}

In fact, a converse to Proposition \ref{sfst} is also true: 
namely, every semifinite spectral triple can be factorised 
into a $KK$-class and a trace \cite{KNR}.  
Although we will not need this converse result, 
it provides a useful way of thinking about semifinite spectral triples.

One of the most useful features of (nice) spectral triples is 
that their pairing with $K$-theory can be computed using 
the local index formula, \cite{CM}.  The same is true for (nice)
semifinite spectral triples.  There are now numerous results  
generalising the Connes-Moscovici local index formula for spectral triples to 
semifinite spectral triples \cite{benfack,cprs1,cprs2,cprs3,cprs4,cgrs1,cgrs2}.

%

\section{Construction of the Kasparov modules}

In this section, $(M,\FF)$ will denote a transversely 
orientable foliated manifold of codimension $q$, 
with holonomy groupoid $\GG$ and normal bundle 
$N = TM/T\FF\rightarrow M$.  The normal bundle is a 
$\GG$-equivariant vector bundle, as explained at the end of Section 2.2, and for $u\in \GG$ we 
let $u_{*}:N_{s(u)}\rightarrow N_{r(u)}$ be the 
corresponding map $n\mapsto u_{*}n$.  We assume $\GG$ to be equipped with a countable cover $\mathcal{U}:=\{U_{i}\}_{i\in I}$ by Hausdorff open subsets. 
We do not assume $K$-orientability at any point, 
working with exterior algebra bundles instead of spinor bundles.  


The first of the two constructions,  the Connes fibration, 
will not feature in the index theorem in the final section.  
The  Kasparov module of the Connes fibration  provides
a Thom-type isomorphism which does not conceptually 
affect our final index formulae.
We include the Connes fibration for the sake of 
completeness, and to show that the whole construction does 
indeed factor through groupoid equivariant $KK$-theory.

\subsection{The Connes fibration}

We begin this section with a revision of a construction 
due to Connes \cite{cyctrans}.  Connes starts with an 
oriented manifold $M$ of dimension $n$ with an action of 
a discrete group $\Gamma$ of orientation-preserving diffeomorphisms.  Such a setting
provides an \'{e}tale model of the transverse geometry of a transversely oriented foliation.

Connes shows that if $W\rightarrow M$ denotes the ``bundle 
of Euclidean metrics" for the tangent bundle $TM$ over 
$M$, then one can construct a dual Dirac 
class in $KK^{\Gamma}_{\frac{n(n+1)}{2}}(C_{0}(M),C_{0}(W))$.  
The manifold $W$ has the advantage that the pullback of $TM$ to $W$ admits a $\Gamma$-invariant Euclidean metric, even 
though one need not exist on $M$ in general.  We  
show that Connes' construction can be carried out
directly in the 
groupoid equivariant setting, as it may be useful for future 
work in constructing the Godbillon-Vey invariant as a 
semifinite spectral triple in arbitrary codimension.

We let $\pi_{F}:F^{+}(N)\rightarrow M$ be the 
principal $GL^{+}(q,\RB)$-bundle of positively oriented frames 
for the vector bundle $N\rightarrow M$, whose 
fibre $(F^{+}(N))_{x}$ over $x\in M$ consists  
of positively oriented linear isomorphisms 
$\phi:\mathbb{R}^{q}\rightarrow N_{x}$.  Then $F^{+}(N)$ 
is a $\GG$-space with anchor map $\pi_{F}:F^{+}(N)\rightarrow M$ 
and action defined by
\begin{equation}
\label{Delta}
    u\cdot \phi:= u_{*}\circ\phi:\RB^{q}\rightarrow N_{r(u)}
\end{equation}
for $\phi:\RB^{q}\rightarrow N_{s(u)}$ in $F^{+}(N)_{s(u)}$.  
Observe that this action of $\GG$ commutes with the right 
action of $GL^{+}(q,\RB)$ on the principal $GL^{+}(q,\RB)$-bundle $F^{+}(N)\rightarrow M$.

The vertical subbundle $\ker(d\pi_F)=VF^{+}(N)\rightarrow F^+N$ of $TF^{+}(N)$ admits a trivialisation $VF^{+}(N)\rightarrow F^{+}(N)\times\mathfrak{gl}(q,\RB)$,
where $\mathfrak{gl}(q,\RB) = M_{q}(\RB)$ is the Lie algebra 
of $GL^{+}(q,\RB)$ consisting of all $q\times q$ real matrices. The trivialisation is given by the formula
\[
    F^{+}(N)\times\mathfrak{gl}(q,\RB)\ni(\phi,v)\mapsto v_{\phi}
    :=\frac{d}{dt}(\phi\cdot\exp(tv))\bigg|_{t=0}\in VF^{+}(N).
\]
For $u\in \GG$, the differential $u_{*}:VF^{+}(N)_{s(u)}\rightarrow VF^{+}(N)_{r(u)}$ of $u\cdot:F^{+}(N)_{s(u)}\rightarrow F^{+}(N)_{r(u)}$ in the fibres defines on $VF^{+}(N)$ the structure of a $\GG$-equivariant vector bundle.  Since the left action of $\GG$ commutes with the right action of 
$GL^{+}(q,\RB)$, one has
\begin{equation}
    u_{*} v_{\phi} = \frac{d}{dt}(u\cdot(\phi\cdot\exp(tv))\bigg|_{t=0} 
    = \frac{d}{dt}((u\cdot\phi)\cdot\exp(tv))\bigg|_{t=0} = v_{u\cdot\phi}
\label{eq:d-dt}
\end{equation}
for all $\phi\in (F^{+}(N))_{s(u)}$, and so with respect to the 
trivialisation $F^{+}(N)\times\mathfrak{gl}(q,\RB)$ of $VF^{+}(N)$ 
we have
\begin{equation}
\label{uonv}
    u_{*}(\phi,v) = (u\cdot\phi,v).
\end{equation}
for all $\phi\in F^{+}(N)$ and $v\in\mathfrak{gl}(q,\RB)$.

Consider now the quotient $Q:=F^{+}(N)/SO(q,\RB)$ of 
$F^{+}(N)$ by the right action of $SO(q,\RB)$.  The projection 
$\pi_{F}:F^{+}(N)\rightarrow M$ descends to a projection 
$\pi_{Q}:Q\rightarrow M$, which defines a fibre bundle 
with typical fibre $S^{+}_{q}:=GL^{+}(q,\RB)/SO(q,\RB)$, 
the space of positive definite, symmetric $q\times q$ matrices. Moreover, since the action of $\GG$ on $F^{+}(N)$ commutes 
with the right action of $SO(q,\RB)$, it follows that $Q$ 
is a $\GG$-space with anchor map $\pi_{Q}:Q\rightarrow M$, 
and with action of $u\in \GG$ given by
\begin{equation}
\label{uonfibres}
u\cdot[\phi]:=[u\cdot\phi]=[u_{*}\circ\phi]
\end{equation}
for all $[\phi]\in Q_{s(u)}$.  Following \cite{ben2,zhang}, we refer to 
$\pi_{Q}:Q\rightarrow M$ as the Connes fibration.

\begin{defn}
	The fibre bundle $\pi_{Q}:Q\rightarrow M$ is a $\GG$-space 
	called the \textbf{Connes fibration} for the normal bundle $N$.
\end{defn}
 
Let us consider the geometry of the fibres of $Q\rightarrow M$.  Since $SO(q,\RB)$ is compact, the pair $(GL^{+}(q,\RB),SO(q,\RB))$ is a Riemannian symmetric pair and hence the space $S^{+}_{q}$ can be equipped with a $GL^{+}(q,\RB)$-invariant metric under which it is, by \cite[Proposition 3.4]{helgason}, a globally symmetric Riemannian space.  The Riemannian space $S_{q}^{+}$ is moreover of noncompact type, so by \cite[Theorem 3.1]{helgason} has everywhere non-positive sectional curvature.  We can find 
a locally finite open cover $\UU$ of $M$ by sets $U$ for 
which the vertical bundle $VQ|_{U}\cong U\times TS_{q}$, 
and then choosing 
a partition of unity subordinate to $\UU$ allows us to equip 
the bundle $VQ\rightarrow Q$ with a Euclidean structure.  
We will assume from here on that $VQ\rightarrow Q$ 
is equipped with a Euclidean structure in this way.

\begin{prop}
\label{verticalequi}
	The bundle $VQ\rightarrow Q$ is a $\GG$-equivariant 
	Euclidean bundle over the $\GG$-space $Q$. Consequently
	$\Cliff(VQ)$ and $\Cliff(V^*Q)$ are $\GG$-equivariant bundles.
\end{prop}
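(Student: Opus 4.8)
The plan is to prove the statement in three stages: first exhibit the $\GG$-equivariant vector bundle structure on $VQ$, then verify that the action is by orthogonal maps (equivalently, that the Euclidean structure on $VQ$ is $\GG$-invariant), and finally deduce the equivariance of $\Cliff(VQ)$ and $\Cliff(V^*Q)$ from the functoriality recorded in Section 2.2.

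First I would produce the equivariant vector bundle structure. For $u\in\GG$ the action \eqref{uonfibres} is a diffeomorphism $Q_{s(u)}\rightarrow Q_{r(u)}$ of the fibres of $\pi_{Q}$ over $s(u),r(u)\in M$; since $\pi_{Q}$ is $\GG$-equivariant (the unit space $M$ carrying the action $u\cdot s(u)=r(u)$), this diffeomorphism carries the fibre of $\pi_{Q}$ through $[\phi]$ to the fibre through $u\cdot[\phi]$. Its differential therefore preserves the vertical subspaces and restricts to a linear isomorphism $u_{*}:VQ_{[\phi]}\rightarrow VQ_{u\cdot[\phi]}$. Smoothness is inherited from smoothness of the action on $Q$, and the cocycle identity $(uv)_{*}=u_{*}\circ v_{*}$ follows from the chain rule together with $(uv)\cdot[\phi]=u\cdot(v\cdot[\phi])$, so that $VQ$ is a $\GG$-equivariant bundle over the $\GG$-space $Q$.

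The main work, and the step I expect to be the real obstacle, is to check that each $u_{*}$ is orthogonal. The essential point is that the fibrewise metric is canonical. Each fibre $Q_{x}=F^{+}(N)_{x}/SO(q,\RB)$ is a copy of the symmetric space $S^{+}_{q}$, and any frame $\phi\in F^{+}(N)_{x}$ induces an identification $[A]\mapsto[\phi\circ A]$ of $S^{+}_{q}$ with $Q_{x}$; pulling back the $GL^{+}(q,\RB)$-invariant metric on $S^{+}_{q}$ gives a metric on $Q_{x}$ independent of the choice of $\phi$, because a change of frame $\phi\mapsto\phi\circ A$ with $A\in GL^{+}(q,\RB)$ alters the identification by the left translation $[B]\mapsto[A^{-1}B]$, which is an isometry. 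This canonical fibrewise metric is precisely the one furnished by the trivialisations $VQ|_{U}\cong U\times TS_{q}$ carrying the symmetric-space metric: on overlaps the local metrics agree by the same invariance, so the partition-of-unity construction simply reassembles $g^{\mathrm{can}}$. I would then fix a frame $\phi_{0}\in F^{+}(N)_{s(u)}$ and use $\phi_{0}$ and $u_{*}\circ\phi_{0}\in F^{+}(N)_{r(u)}$ to trivialise $Q_{s(u)}$ and $Q_{r(u)}$; under these identifications the action $[\phi_{0}\circ A]\mapsto[u_{*}\circ\phi_{0}\circ A]$ becomes the \emph{identity} on $S^{+}_{q}$ by \eqref{uonfibres}. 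Hence $u_{*}$ is an isometry of the symmetric-space metric, and $VQ$ is a $\GG$-equivariant Euclidean bundle. The only genuine subtlety here is confirming that the partition-of-unity metric coincides with $g^{\mathrm{can}}$, which reduces entirely to the $GL^{+}(q,\RB)$-invariance of the metric on $S^{+}_{q}$.

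Finally, the consequences for the Clifford bundles are formal. The metric isomorphism $VQ\cong V^{*}Q$ transports the $\GG$-equivariant Euclidean structure to $V^{*}Q$, under which orthogonal fibre maps remain orthogonal, so $V^{*}Q$ is likewise a $\GG$-equivariant Euclidean bundle. The discussion following Lemma \ref{cliffacthom} then applies verbatim: functoriality of the complex Clifford algebra turns the orthogonal maps $u_{*}$ into algebra isomorphisms $u_{\diamond}:\Cliff(VQ_{s(u)})\rightarrow\Cliff(VQ_{r(u)})$ (and similarly for $V^{*}Q$) satisfying the cocycle identity, so that $\Cliff(VQ)$ and $\Cliff(V^{*}Q)$ are $\GG$-equivariant bundles, with the compatibility \eqref{cL}--\eqref{cR} between $u_{*}$ and $u_{\diamond}$ supplied directly by Lemma \ref{cliffacthom}.
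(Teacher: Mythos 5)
Your proposal is correct and follows essentially the same route as the paper: localise over the base, identify each fibre of $Q$ with the symmetric space $S^{+}_{q}$ via a frame, observe that the holonomy action becomes an isometry of the $GL^{+}(q,\RB)$-invariant metric (the paper realises it as left multiplication by some $\tilde{u}\in GL^{+}(q,\RB)$, you as the identity after transporting the frame by $u$ --- the same computation), and then invoke functoriality of Clifford algebras under orthogonal maps, via Lemma \ref{cliffacthom}, for the final statement. Your only addition is to spell out a point the paper leaves implicit, namely that the partition-of-unity Euclidean structure on $VQ$ coincides with the canonical frame-independent fibrewise metric because transition functions act by left translations, which are isometries of $S^{+}_{q}$; this is a worthwhile clarification but not a different argument.
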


\begin{proof}
Fix $u\in \GG$ and suppose that $U_{s}$ and $U_{r}$ are 
open sets in $M$ 
containing $s(u)$ and $r(u)$ respectively, such that we 
have local trivialisations
$N|_{U_{s}} \cong U\times\RB^{q}$ and 
$N|_{U_{r}} \cong U\times\RB^{q}$, with respect to 
which the holonomy action $u_*:N_{s(u)}\rightarrow N_{r(u)}$ 
is the action on $\RB^{q}$ of an element $\tilde{u}\in GL^{+}(q,\RB)$.  

We obtain corresponding local trivialisations 
$F^{+}(N)|_{U_{s}}\cong U\times GL^{+}(q,\RB)$ 
and $F^{+}(N)|_{U_{r}}\cong U\times GL^{+}(q,\RB)$ 
of the local frame bundles over $U_{s}$ and $U_{r}$, 
in which the holonomy action $u\cdot:F^{+}(N)_{s(u)}\rightarrow F^{+}(N)_{r(u)}$ 
is left multiplication on $GL^{+}(q,\RB)$ by $\tilde{u}$, and taking the 
quotient by $SO(q,\RB)$ we get local trivialisations 
$Q|_{U_{s}}\cong U\times S_{q}^{+}$ and 
$Q|_{U_{r}}\cong U\times S_{q}^{+}$ in which 
$u\cdot:Q_{s(u)}\rightarrow Q_{r(u)}$ is the isometry of 
$S^{+}_{q}=GL^{+}(q,\RB)/SO(q,\RB)$ defined by left 
multiplication by $\tilde{u}\in GL^{+}(q,\RB)$. 
Thus $\GG$ acts by orientation-preserving isometries between the fibres of $Q$, inducing 
an action by special orthogonal transformations on the Euclidean 
bundle $VQ\rightarrow Q$ of vectors tangent to the fibres of 
$Q\rightarrow M$, hence making $VQ\rightarrow Q$ a 
$\GG$-equivariant Euclidean bundle over the $\GG$-space $Q$.
The final statement follows from functoriality of Clifford algebras with respect to orthogonal maps.
\end{proof}

That the fibres have nonpositive sectional curvature allows 
us to define a dual Dirac class for $Q$ over $M$ in a 
similar manner to Connes \cite{cyctrans}.  First, let $\CCl(V^{*}Q)$ be equipped with the $\GG$-structure 
arising from the action of $\GG$ on the equivariant bundle 
$\Cliff(V^{*}Q)$ over the $\GG$-space $Q$, denoted for $u\in \GG$ by $u_{\diamond}:\Cliff(V^{*}_{[\phi]}Q)\rightarrow \Cliff(V^{*}_{u\cdot[\phi]}Q)$ for all $[\phi]\in Q_{s(u)}$.  
That is, we define for any $u\in \GG$ an isomorphism 
$\alpha^{1}_{u}:\CCl(V^{*}Q|_{Q_{s(u)}})
\rightarrow \CCl(V^{*}Q|_{Q_{s(u)}})$ 
by
\begin{equation}
\label{actiononQ}
    \alpha^{1}_{u}(a)([\phi]):=u_{\diamond}a(u^{-1}\cdot[\phi])
\end{equation}
for all $[\phi]\in Q_{r(u)}$.  Also let 
\[
    E^{1}:=\Lambda^{*}(V^{*}Q)\otimes\CB
\]
be the complexified exterior algebra bundle of the  bundle 
of vertical covectors $V^{*}Q$ over $Q$.  Here we 
equip $V^{*}Q$ with the Euclidean structure coming 
from its dual $VQ$, which determines a Hermitian 
structure on $V^{*}Q\otimes\CB$ and hence on $E^{1}$.  
Observe that
\[
    X_{E^{1}}:=\Gamma_{0}(Q;E^{1})
\]
is a Hilbert $\CCl(V^{*}Q)$-module under the inner product
\[
	\langle\rho^{1},\rho^{2}\rangle_{\CCl(V^{*}Q)}([\phi])
	:=\psi_{V^{*}Q}(\rho^{1}([\phi]))\psi_{V^{*}Q}(\rho^{2}([\phi]))
\]
and right action
\[
	(\rho\cdot a)([\phi]):=c_{R}(a([\phi]))\rho([\phi]),
\]
where $c_{R}$ is the right action of $\Cliff(V^{*}Q)$ on the 
Clifford bimodule $E^{1}$.

The isometric action of $\GG$ on the Euclidean bundle 
$VQ$ over $Q$ gives rise to a unitary action of $\GG$ on
$E^{1}$, denoted for each $u\in \GG$ by $u_{*}:E^{1}_{[\phi]}\rightarrow E^{1}_{u\cdot[\phi]}$ for all $[\phi]\in Q_{s(u)}$, and hence determines an isomorphism 
$W^{1}_{u}:\Gamma_{0}(Q_{s(u)};E^{1}|_{Q_{s(u)}})
\rightarrow \Gamma_{0}(Q_{r(u)};E^{1}|_{Q_{r(u)}})$ 
of Banach spaces given by the formula
\[
    (W^1_{u}\rho)([\phi]):=u_{*}\rho(u^{-1}\cdot[\phi])
\]
for all $[\phi]\in Q_{r(u)}$.  A routine calculation using Lemma \ref{cliffacthom} shows that
\[
    \langle W^1_{u}\rho^{1},W^1_{u}\rho^{2}\rangle_{\CCl(V^{*}Q)} 
    = \alpha^{1}_{u}(\langle\rho^{1},\rho^{2}\rangle_{\CCl(V^{*}Q)}),
\]
so $(X_{E^{1}},W^1)$ is a $\GG$-equivariant 
Hilbert $\CCl(V^{*}Q)$-module.

Choose now a Euclidean metric for $N$.  Such a choice is determined by a section $\sigma:M\rightarrow Q$ of $\pi_{Q}:Q\rightarrow M$.  For $[\phi_{1}],[\phi_{2}]$ in the same fibre $Q_{x}$, denote by $h([\phi_{1}],[\phi_{2}])$ the geodesic distance between $[\phi_{1}]$ and $[\phi_{2}]$ in the fibre, and then for any $[\phi_{0}]\in Q$ let $h^{[\phi_{0}]}:Q\rightarrow\RB$ be the function
\[
    h^{[\phi_{0}]}([\phi]):=h([\phi_{0}],[\phi]).
\]
In particular, for $x\in M$ and $[\phi]\in Q_{x}$, $h^{\sigma(x)}([\phi])$ gives the distance in the fibre between $[\phi]$ and the section $\sigma$.  Consider now the vertical 1-form
\[
    Z_{[\phi]}:=h^{\sigma(\pi_{Q}([\phi]))}([\phi])dh^{\sigma(\pi_{Q}([\phi]))}_{[\phi]},
\]
where $d$ denotes the exterior derivative in the fibre.  Define an operator $B_{1}$ on the dense submodule $X^{c}_{E^{1}}:=\Gamma_{c}(Q;E^{1})$ of $X_{E^{1}}$ by the formula
\[
    (B_{1}\rho)([\phi]):=c_{L}(Z_{[\phi]})\rho([\phi]),
\]
where $c_{L}$ is the left representation of $\Cliff(V^{*}Q)$ on the Clifford bimodule $E^{1}$.  Since $c_{L}$ and $c_{R}$ commute, $B_{1}$ commutes with the right action of $\CCl(V^{*}Q)$.
Finally, we let $m$ be the representation of $C_{0}(M)$ on $X_{E^{1}}$ by multiplication, that is
\[
	(m(f)\rho)([\phi]):=f(\pi_{Q}([\phi]))\rho([\phi])
\]
for all $f\in C_{0}(M)$ and $\rho\in X_{E^{1}}$.  Equivariance of the map $\pi_{Q}$ tells us that $m$ is an equivariant representation.

\begin{prop}
\label{B1}
The triple $(C_{0}(M),{}_{m}X_{E^{1}},B_{1})$ 
is an unbounded $\GG$-equivariant Kasparov 
$C_{0}(M)$-$\CCl(V^{*}Q)$-module, hence defines a class
\[
    [B_{1}]\in KK^{\GG}(C_{0}(M),\CCl(V^{*}Q)).
\]
\end{prop}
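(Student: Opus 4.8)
The plan is to verify the five conditions of Definition \ref{unbdd} for the triple $(C_{0}(M),{}_{m}X_{E^{1}},B_{1})$ with dense subalgebra $\AA = C_{c}^{\infty}(M)$, using the facts already established above: that $(X_{E^{1}},W^{1})$ is a $\GG$-equivariant Hilbert $\CCl(V^{*}Q)$-module, that $m$ is an equivariant representation, and that $B_{1}$ commutes with the right $\CCl(V^{*}Q)$-action. First I would record the algebraic points. Since $B_{1} = c_{L}(Z)$ is left Clifford multiplication by the real vertical $1$-form $Z$, it is odd for the even/odd grading of $E^{1} = \Lambda^{*}(V^{*}Q)\otimes\CB$ and symmetric on $X^{c}_{E^{1}}$ (as $c_{L}(a)^{*} = c_{L}(a^{*})$ and $Z$ is self-adjoint in $\Cliff(V^{*}Q)$). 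Because $m(f)$ acts as multiplication by the scalar $f\circ\pi_{Q}$, it commutes with the fibrewise operator $B_{1}$, so $[B_{1},m(f)] = 0$ and $m(f)\Dom(B_{1})\subset\Dom(B_{1})$; this settles Conditions 1 and 2 at once.

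The analytic core of the non-equivariant part is the identity $B_{1}^{2} = H^{2}$, where $H([\phi]) := h^{\sigma(\pi_{Q}([\phi]))}([\phi])$ is the fibrewise geodesic distance to the section $\sigma$. This follows from the Clifford relation $c_{L}(Z)^{2} = c_{L}(Z\cdot Z) = \|Z\|^{2}\id$ together with the fact that the fibres $S^{+}_{q}$ are Hadamard manifolds, on which $H$ is smooth away from $\sigma$ with $\|dH\| = 1$, so that $\|Z_{[\phi]}\|^{2} = H([\phi])^{2}$ (and $Z = \tfrac12 d(H^{2})$ is smooth across $\sigma$). Consequently $B_{1}\pm i$ admits the bounded adjointable inverse given by multiplication by $(B_{1}\mp i)(1+H^{2})^{-1}$, which shows that the closure of $B_{1}$ is self-adjoint and regular. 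For Condition 3, the operator $m(f)(1+B_{1}^{2})^{-\frac12}$ is multiplication by the scalar function $[\phi]\mapsto f(\pi_{Q}([\phi]))(1+H([\phi])^{2})^{-\frac12}$: compact support of $f$ in $M$ controls the base directions while the factor $(1+H^{2})^{-\frac12}$ decays along the non-compact fibres, so this function lies in $C_{0}(Q)$, and as such represents an element of $\CCl(V^{*}Q)\cong\KK(X_{E^{1}})$ acting by $c_{L}$.

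The main obstacle is Condition 4, the boundedness of the equivariance defect. I would first identify it explicitly: using \eqref{cL} and the fact (Proposition \ref{verticalequi}) that each $u\in\GG$ acts by a fibre isometry of $Q$, one finds that on the fibre $Q_{r(u)}$ the operator $r_{i}^{*}B_{1} - W^{i}\circ s_{i}^{*}B_{1}\circ(W^{i})^{-1}$ is left Clifford multiplication by the $1$-form $\tfrac12 d\big((h^{\sigma(r(u))})^{2} - (h^{u\cdot\sigma(s(u))})^{2}\big)$, the discrepancy arising precisely because the chosen section $\sigma$ need not be $\GG$-equivariant. The crucial geometric estimate is that the fibrewise norm of this $1$-form is bounded, uniformly over the fibre, by the distance $h(\sigma(r(u)),u\cdot\sigma(s(u)))$ between the two base points. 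This is where non-positive curvature is essential: writing the norm of the defect at $[\phi]$ as $\|\exp_{[\phi]}^{-1}(q) - \exp_{[\phi]}^{-1}(p)\|$ with $p = \sigma(r(u))$ and $q = u\cdot\sigma(s(u))$, the $\mathrm{CAT}(0)$ comparison inequality for the geodesic triangle with vertices $[\phi],p,q$ gives $\|\exp_{[\phi]}^{-1}(q)-\exp_{[\phi]}^{-1}(p)\|^{2} = b^{2}+c^{2}-2bc\cos\gamma \le h(p,q)^{2}$, where $b,c$ are the distances from $[\phi]$ to $q,p$ and $\gamma$ is the (comparison-dominated) angle at $[\phi]$. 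Multiplying by $f\in C_{c}(U_{i})$ restricts $u$ to a relatively compact set, on which $h(\sigma(r(u)),u\cdot\sigma(s(u)))$ is bounded, so the localized defect is Clifford multiplication by a uniformly bounded $1$-form and hence adjointable, yielding Condition 4. Condition 5 then follows as a corollary: the operators $(r_{i}^{*}B_{1})f$ and $W^{i}(s_{i}^{*}B_{1})f(W^{i})^{-1}$ differ by the bounded operator of Condition 4, so their domains coincide after transport by $W^{i}$. Assembling these verifications shows that the triple defines the claimed class $[B_{1}]\in KK^{\GG}(C_{0}(M),\CCl(V^{*}Q))$.
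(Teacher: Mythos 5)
Your proposal is correct, and its overall architecture matches the paper's: the same commutator and compactness verifications (you correctly note that $m(f)(1+B_{1}^{2})^{-\frac{1}{2}}$ is multiplication by a $C_{0}(Q)$ function, hence lies in $\KK(X_{E^{1}})\cong\CCl(V^{*}Q)$), and the same identification of the equivariance defect as left Clifford multiplication by $\tfrac{1}{2}d\big((h^{\sigma(r(u))})^{2}-(h^{u\cdot\sigma(s(u))})^{2}\big)$. Where you genuinely diverge is in the two analytic steps. For self-adjointness and regularity, the paper localizes: each fibre $(X_{E^{1}})_{[\phi]}$ is the finite-dimensional Hilbert space $\Lambda^{*}(V^{*}_{[\phi]}Q)\otimes\CB$ on which $(B_{1})_{[\phi]} = c_{L}(Z_{[\phi]})$ is self-adjoint, and Pierrot's local-global principle \cite{localglobal} then gives the conclusion; you instead exhibit the explicit bounded adjointable resolvent $(B_{1}\mp i)(1+H^{2})^{-1}$ (pointwise of norm at most $1$, since $c_{L}(Z_{[\phi]})$ is self-adjoint on each finite-dimensional fibre) and invoke the standard Hilbert-module criterion that a closed symmetric operator with $B\pm i$ surjective is self-adjoint and regular. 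Your route is more elementary and self-contained; the paper's localization argument is the one that generalizes when no explicit resolvent is at hand. For the key geometric estimate, the paper follows Kasparov \cite[Lemma 5.3]{KasparovEqvar}: it factors the defect as $\tfrac{1}{2}d\big((h^{\sigma_{r}}-h^{u\cdot\sigma_{s}})(h^{\sigma_{r}}+h^{u\cdot\sigma_{s}})\big)$, bounds $\|dh^{\sigma_{r}}_{[\phi]}-dh^{u\cdot\sigma_{s}}_{[\phi]}\|\leq 2h(\sigma_{r},u\cdot\sigma_{s})(h^{\sigma_{r}}([\phi])+h^{u\cdot\sigma_{s}}([\phi]))^{-1}$, and then uses the cosine inequality for nonpositive curvature to obtain $\|Z_{[\phi]}-u_{*}Z_{u^{-1}\cdot[\phi]}\|^{2}\leq 2h(\sigma_{r},u\cdot\sigma_{s})^{2}$. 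Your one-step alternative --- identifying the defect covector at $[\phi]$, via the metric and up to sign, with $\exp_{[\phi]}^{-1}(q)-\exp_{[\phi]}^{-1}(p)$ for $p=\sigma(r(u))$, $q=u\cdot\sigma(s(u))$, and applying the $\mathrm{CAT}(0)$ angle comparison --- amounts to the statement that $\exp_{[\phi]}^{-1}$ is distance-nonincreasing on the Hadamard fibres $S^{+}_{q}$, and it even sharpens the constant to $h(p,q)$ in place of $\sqrt{2}\,h(p,q)$; either bound suffices, since only local uniform boundedness in $u$ (after multiplying by $f\in C_{c}(U_{i})$, whose support is compact in the Hausdorff set $U_{i}$) is required. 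Finally, your explicit derivation of the domain condition (Condition 5 of Definition \ref{unbdd}) from the boundedness of the defect, using that $W^{i}$ is $C_{0}(U_{i})$-linear so that $f$ passes through it, fills in a point the paper leaves implicit, and is correct: a bounded adjointable perturbation does not alter the domain of a closed operator.
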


\begin{proof}
The first thing we need to prove is that $B_{1}$ is self-adjoint 
and regular.  Observe first that $B_{1}$ is clearly symmetric.  
For each $[\phi]\in Q$, the localization $(X_{E^{1}})_{[\phi]}$ 
of $X_{E^{1}}$ in the sense of \cite{localglobal} and \cite{KaLe2} 
is just the 
finite dimensional Hilbert space
\[
    \HH_{[\phi]}:=\Lambda^{*}(V^{*}_{[\phi]}Q)\otimes\CB
\]
with the inner product coming from the Hermitian structure 
on $\Lambda^{*}(V^{*}_{[\phi]}Q)\otimes\mathbb{C}$, 
and the action of the localised operator $(B_{1})_{[\phi]}$ 
on $\HH_{[\phi]}$ is
\[
    (B_{1})_{[\phi]}\eta:=c_{L}(Z_{[\phi]})\eta.
\]
Since $(B_{1})_{[\phi]}$ is then self-adjoint on 
$\HH_{[\phi]}$, it follows from \cite[Th\'eor\`eme 1.18]{localglobal} 
that $B_{1}$ is self-adjoint and regular.

That $m(f)(1+B_{1}^2)^{-\frac{1}{2}}$ is a compact 
operator for all $f\in C_{0}(M)$ 
follows from the definition of Clifford multiplication.  Indeed, 
one has $c_{L}(Z_{[\phi]})^{2} = \|Z_{[\phi]}\|^{2} = h^{\sigma(\pi_{Q}([\phi]))}([\phi])^{2}$ since $dh^{\sigma(\pi_{Q}([\phi]))}_{[\phi]}$ has norm 1 for all $[\phi]$ as the dual of the tangent to the unique unit speed geodesic joining $\sigma(\pi_{Q}([\phi]))$ to $[\phi]$,
and so for any $f\in C_{0}(M)$, 
one simply has
\[
    (m(f)(1+B_{1}^{2})^{-\frac{1}{2}}\rho)([\phi]) = 
    \frac{f(\pi_{Q}([\phi]))}{(1+h^{\sigma(\pi_{Q}([\phi]))}([\phi])^{2})^{\frac{1}{2}}}\rho([\phi]).
\]
Since $f$ vanishes at infinity on the base $M$ of $Q\rightarrow M$, and since $[\phi]\mapsto (1+h^{\sigma(\pi_{Q}([\phi]))}([\phi])^{2})^{-\frac{1}{2}}$ vanishes at infinity on the fibres of $Q\rightarrow M$, the function $[\phi]\mapsto f(\pi_{Q}([\phi]))(1+h^{\sigma(\pi_{Q}([\phi]))}([\phi])^2)^{-\frac{1}{2}}$ is an element of $C_{0}(Q)$, so that $m(f)(1+B_{1}^{2})^{-\frac{1}{2}}$ is indeed a compact operator on the $\CCl(V^{*}Q)$-module $X_{E^{1}}$.

Concerning commutators, it is clear that $B_{1}$ commutes with the representation $m$ of $C_{0}(M)$.  Thus it only remains to prove that $B_{1}$ is appropriately equivariant.  The idea of this is essentially the unbounded version of analogous results by Connes \cite[Lemma 5.3]{cyctrans} and Kasparov \cite[Section 5.3]{KasparovEqvar}, but the details are somewhat technical so we give them here.  Fix $u\in \GG$ 
and $\rho\in\Gamma_{c}(Q_{r(u)};E^{1}|_{Q_{r(u)}})$.  We calculate
\begin{align*}
    (B_{1}-W^1_{u}B_{1}W^1_{u^{-1}})\rho([\phi]) =& c_{L}(Z_{[\phi]})\rho([\phi])-u_{*}(B_{1}W_{u^{-1}}^{1}\rho)(u^{-1}\cdot[\phi])\\ =& c_{L}(Z_{[\phi]})\rho([\phi]) - u_{*}(c_{L}(Z_{u^{-1}\cdot[\phi]})(W_{u^{-1}}^{1}\rho)(u^{-1}\cdot[\phi]))\\ =& c_{L}(Z_{[\phi]})\rho([\phi]) - u_{*}(c_{L}(Z_{u^{-1}\cdot[\phi]})(u^{-1}_{*}\rho([\phi])))\\ =& c_{L}(Z_{[\phi]}-u_{*} Z_{u^{-1}\cdot[\phi]})\rho([\phi])
\end{align*}
where on the third line we have used the identity \eqref{cL}.  Thus we must calculate a bound for the norm of the covector $Z_{[\phi]}-u_{*} Z_{u^{-1}\cdot [\phi]}$.

Denote  $\sigma_{r}:=\sigma(r(u))$ and $\sigma_{s}:=\sigma(s(u))$. With this notation, we have
\[
    Z_{[\phi]}-u_{*} Z_{u^{-1}\cdot [\phi]} = h^{\sigma_{r}}([\phi])dh^{\sigma_{r}}_{[\phi]} - u_{*} h^{\sigma_{s}}(u^{-1}\cdot [\phi])dh^{\sigma_{s}}_{u^{-1}\cdot [\phi]}.
\]
For any vector $\gamma\in V_{[\phi]}Q$ we have
\[
    (u_{*} dh^{\sigma_{s}}_{u^{-1}\cdot [\phi]})(\gamma) = dh^{\sigma_{s}}_{u^{-1}\cdot[\phi]}(u^{-1}_{*}\gamma) =  d(h^{\sigma_{s}}\circ u^{-1})_{[\phi]}(\gamma),
\]
giving $u_{*} dh^{\sigma_{s}}_{u^{-1}\cdot[\phi]} = d(h^{\sigma_{s}}\circ u^{-1})_{[\phi]}$, and since the action of $\GG$ is isometric on the fibres we get
\[
    (h^{\sigma_{s}}\circ u^{-1})([\phi]) = h(\sigma_{s},u^{-1}\cdot [\phi]) = h(u\cdot \sigma_{s},[\phi]) = h^{u\cdot \sigma_{s}}([\phi]).
\]
Thus
\[
    u_{*} dh^{\sigma_{s}}_{u^{-1}\cdot [\phi]} = dh^{u\cdot \sigma_{s}}_{[\phi]}.
\]
We then see that
\begin{align*}
    h^{\sigma_{r}}([\phi])dh^{\sigma_{r}}_{[\phi]}-u_{*} h^{\sigma_{s}}(u^{-1}\cdot[\phi])dh^{\sigma_{s}}_{u^{-1}\cdot [\phi]} =& h^{\sigma_{r}}([\phi])dh^{\sigma_{r}}_{[\phi]}-h^{u\cdot \sigma_{s}}([\phi])dh^{u\cdot \sigma_{s}}_{[\phi]}\\
    =& \frac{1}{2}d\bigg((h^{\sigma_{r}})^{2}-(h^{u\cdot \sigma_{s}})^{2}\bigg)_{[\phi]}\\
    =& \frac{1}{2}d\bigg((h^{\sigma_{r}}-h^{u\cdot \sigma_{s}})(h^{\sigma_{r}}+h^{u\cdot \sigma_{s}})\bigg)_{[\phi]}.
\end{align*}
By the argument \cite[Lemma 5.3]{KasparovEqvar}, we have
\[
    \|dh^{\sigma_{r}}_{[\phi]}-dh^{u\cdot \sigma_{s}}_{[\phi]}\|\leq 2h(\sigma_{r},u\cdot \sigma_{s})(h^{\sigma_{r}}([\phi])+h^{u\cdot \sigma_{s}}([\phi]))^{-1},
\]
which we use to estimate
\begin{align*}
    \|h^{\sigma_{r}}([\phi])dh^{\sigma_{r}}_{[\phi]}-u_{*} h^{\sigma_{s}}(u^{-1}\cdot [\phi])dh^{\sigma_{s}}_{u^{-1}\cdot [\phi]}\|^{2}\leq&\frac{1}{4}\|(dh^{\sigma_{r}}_{[\phi]}-dh^{u\cdot \sigma_{s}}_{[\phi]})(h^{\sigma_{r}}([\phi])+h^{u\cdot \sigma_{s}}([\phi]))\|^{2}\\&+\frac{1}{4}\|(h^{\sigma_{r}}([\phi])-h^{u\cdot \sigma_{s}}([\phi]))(dh^{\sigma_{r}}_{[\phi]}+dh^{u\cdot \sigma_{s}}_{[\phi]})\|^{2}\\
    \leq& h(\sigma_{r},u\cdot \sigma_{s})^{2}+(h(\sigma_{r},[\phi])-h(u\cdot \sigma_{s},[\phi]))^{2}\\
    =& h(\sigma_{r},u\cdot \sigma_{s})^{2}+h(\sigma_{r},[\phi])^{2}+h(u\cdot \sigma_{s},[\phi])^{2}\\&-2h(\sigma_{r},[\phi])h(u\cdot \sigma_{s},[\phi])\\
    \leq&2h(\sigma_{r},u\cdot \sigma_{s})^{2},
\end{align*}
where the last line is a consequence of the cosine inequality 
for spaces of non-positive sectional curvature \cite[Corollary 13.2]{helgason}.

Thus for all $[\phi]\in Q_{r(u)}$, we have 
$\|Z_{[\phi]}-u_{*} Z_{u^{-1}\cdot [\phi]}\|^{2}
\leq 2h(\sigma(r(u)),u\cdot \sigma(s(u)))^{2}$ independently of 
$[\phi]\in Q_{r(u)}$, implying that $B_{1}-W^{1}_{u}B_{1}W^{1}_{u^{-1}}$
extends to a bounded operator on $(X_{E^{1}})_{r(u)}$.  Moreover $u\mapsto h(\sigma(r(u)),u\cdot\sigma(s(u)))$ is continuous 
hence bounded on compact Hausdorff sets, so for any element $U_{i}$ of the cover $\mathcal{U} = \{U_{i}\}_{i\in I}$ of $\GG$ by Hausdorff open subsets, and for any $\varphi\in C_{c}(U_{i})$ and $f\in C_{0}(M)$ we have that
\[
    \varphi\cdot m_{i}^{r}(r_{i}^{*}(f))\cdot(r_{i}^{*}B_{1}
    -(W^{1})^{i}\circ s_{i}^{*}B_{1}\circ ((W^{1})^{i})^{-1})\in\LL(r_{i}^{*}X_{E^{1}}).
\]
It follows that $(C_{0}(M),{}_{m}X_{E^{1}},B_{1})$ is an 
unbounded equivariant Kasparov 
$C_{0}(M)$-$\CCl(V^{*}Q)$-module.
\end{proof}

\subsection{The foliation of the Connes fibration}

Before we can construct a second Kasparov module and the 
semifinite spectral triple associated to it, we need a closer 
study of the geometry and groupoid representation theory at our disposal.

\begin{defn}\label{bottcon}
	Assume $M$ to be equipped with a Riemannian metric $g$, with Levi-Civita connection $\nabla^{LC}$.  Then $N$ identifies with the subbundle $N = T\FF^{\perp}$ of $TM$, and we use the notation $X = X_{\FF}+X_{N}$ for the corresponding decomposition of vector fields into their normal and leafwise components.  Define a connection $\nabla^{\flat}$ on $N$ by the fomula
	\[
	\nabla^{\flat}_{X}(Y) = [X_{\FF},Y]_{N}+\nabla^{LC}_{X_{N}}(Y)_{N},\hspace{7mm}X\in\Gamma^{\infty}(M;TM),\,Y\in\Gamma^{\infty}(M;N).
	\]
	We refer to $\nabla^{\flat}$ as a \textbf{torsion-free Bott connection}.
\end{defn}

The terminology ``torsion-free" in Definition \ref{bottcon} refers to the fact that for any such connection $\nabla^{\flat}$ the associated torsion tensor
\[
T_{\nabla^{\flat}}(X,Y):=\nabla_{X}^\flat(Y_{N})-\nabla^\flat_{Y}(X_{N}) - [X,Y]_{N}
\]
vanishes for all $X,Y\in\Gamma^{\infty}(M;TM)$.  This fact follows from an easy calculation using the corresponding property of the Levi-Civita connection.  Bott connections more generally are characterised by the formula $\nabla^{\flat}_{X_{\FF}}(Y) = [X_{\FF},Y]_{N}$ for all smooth sections $Y$ of $N$ and $X_{\FF}$ of $T\FF$, and are the key ingredient for the Chern-Weil proof of Bott's vanishing theorem \cite{bott1}.  For us, the use of the Levi-Civita connection in Definition \ref{bottcon} serves a purpose that will become apparent in Section \ref{relconnes}.

Let us now come back to the frame bundle $\pi_{F}:F^{+}(N)\rightarrow M$.  
The total space of this bundle carries a foliation $\FF_{F}$ by the orbits of the action of $\GG$ on $F^{+}(N)$ defined in Equation \eqref{Delta}, whose normal we denote by $N_{F} := TF^{+}(N)/T\FF_{F}$.  The foliation $\FF_{F}$ is everywhere transverse to the fibres of $F^{+}(N)$, and its tangent bundle $T\FF_{F}$ projects fibrewise-isomorphically onto $T\FF$.  It is readily verified using a calculation in foliated coordinates that the connection form $\alpha^{\flat}\in\Omega^{1}(F^{+}(N);\mathfrak{gl}(q,\RB))$ associated to any Bott connection $\nabla^{\flat}$ on $N$ contains $T\FF_{F}$ in its kernel $HF^{+}(N):=\ker(\alpha^{\flat})$.  Consequently we find that the normal bundle to the foliation $\FF_{F}$ admits a decomposition
\begin{equation}
    N_{F} = VF^{+}(N)\oplus (HF^{+}(N)/T\FF_{F}).
    \label{eq:normal-split}
\end{equation}
The normal bundle $N_{F}$ is again a $\GG$-equivariant 
bundle, and with respect to the splitting \eqref{eq:normal-split} we write
\[
    u_*=\begin{pmatrix} \tilde{a}(u) & \tilde{c}(u) \\ 0 & \tilde{d}(u)\end{pmatrix}
\]
for the action of $u\in \GG$ on $N_{F}$. Note that the zero appearing in the bottom left corner is a consequence of the fact that by \eqref{Delta}, $\GG$ acts via diffeomorphisms between the fibres $GL^{+}(q,\RB)$ of $F^{+}(N)\rightarrow M$, and so preserves the bundle $VF^{+}(N)\rightarrow M$ of vectors tangent to the fibres.

Now we are not so interested in the frame bundle $F^{+}(N)$ as
the Connes fibration $Q$.  Since the action of $\GG$ on 
$F^{+}(N)$ commutes with the right action of $SO(q,\RB)$, however, 
we find that we also obtain a 
foliation on the total space of $\pi_{Q}:Q\rightarrow M$.

To be more specific, let $k:F^{+}(N)\rightarrow Q$ be the 
quotient map.  Then $T\FF_{Q}:=dk(T\FF_{F})$ is an 
integrable subbundle of $TQ$, which determines a 
foliation $\FF_{Q}$ of $Q$. Since $\pi_{Q}\circ k = \pi_{F}$, 
we see that $d\pi_{Q}$ maps $T\FF_{Q}$ isomorphically 
onto $T\FF$ making $\pi_{Q}:Q\rightarrow M$ a foliated bundle.  
The normal bundle $N_{Q}$ of $\FF_{Q}$ also admits a splitting
\[
    N_{Q} = VQ\oplus (HQ/T\FF_{Q}),
\]
where $HQ$ is the isomorphic image under $dk$ 
of the horizontal subbundle $HF^{+}(N)\subset TF^{+}(N)$.  
For convenience, we will denote $HQ/T\FF_{Q}$ by simply $H$.  
Thus,
\[
    N_{Q} = VQ\oplus H.
\]
Now, $d\pi_{Q}$ maps the fibres of $HQ$ isomorphically 
onto those of $TM$, and maps the fibres of $T\FF_{Q}$ 
isomorphically onto those of $T\FF$.  It follows that $d\pi_{Q}$ 
induces an isomorphism of the fibres of $H = HQ/T\FF_{Q}$ 
onto those of $N = TM/T\FF$.  We can then equip $H$ 
with a Euclidean metric in the following way, 
due to Connes \cite[Section 5]{cyctrans}.

\begin{prop}
\label{bundleofmetrics}
For $h_{1},h_{2}\in H_{[\phi]}$ and with $\cdot$ 
denoting the Euclidean inner product in $\RB^{q}$, the formula
\[
    m^{H}_{[\phi]}(h_{1},h_{2})
    :=\phi^{-1}(d\pi_{Q}(h_{1}))\cdot\phi^{-1}(d\pi_{Q}(h_{2}))
\]
determines a well-defined Euclidean metric on the 
bundle $H\rightarrow Q$.
\end{prop}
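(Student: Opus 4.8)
The plan is to realise $m^{H}$ as the pullback, along the fibrewise isomorphism induced by $d\pi_{Q}$, of a canonical inner product on $N_{\pi_{Q}([\phi])}$ that is encoded by the point $[\phi]\in Q$ itself. This viewpoint makes both the well-definedness and the positive-definiteness transparent, and reduces smoothness to a local choice of frame.

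First I would observe that each point of $Q$ determines a Euclidean inner product on the corresponding fibre of $N$. Given a representative $\phi:\RB^{q}\rightarrow N_{x}$ of a class $[\phi]\in Q_{x}$, where $x:=\pi_{Q}([\phi])$, set $g_{[\phi]}(v,w):=\phi^{-1}(v)\cdot\phi^{-1}(w)$ for $v,w\in N_{x}$. The only freedom in the choice of representative is right multiplication by some $g\in SO(q,\RB)$, for which $(\phi\cdot g)^{-1} = g^{-1}\circ\phi^{-1}$; since $g^{-1}\in SO(q,\RB)\subset O(q,\RB)$ preserves the Euclidean inner product on $\RB^{q}$, we obtain $g_{[\phi\cdot g]} = g_{[\phi]}$. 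Hence $g_{[\phi]}$ depends only on the class $[\phi]$, and the proposed formula is exactly $m^{H}_{[\phi]}(h_{1},h_{2}) = g_{[\phi]}(d\pi_{Q}(h_{1}),d\pi_{Q}(h_{2}))$, where $d\pi_{Q}:H_{[\phi]}\rightarrow N_{x}$ denotes the fibre isomorphism induced by $d\pi_{Q}$ as in the discussion preceding the proposition. This establishes well-definedness.

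Symmetry and bilinearity of $m^{H}_{[\phi]}$ then follow immediately from those of the Euclidean inner product together with the linearity of $\phi^{-1}$ and of the induced map $d\pi_{Q}$, while positive-definiteness follows from $m^{H}_{[\phi]}(h,h) = \phi^{-1}(d\pi_{Q}(h))\cdot\phi^{-1}(d\pi_{Q}(h))\geq 0$, with equality precisely when $h = 0$, since both $\phi^{-1}$ and $d\pi_{Q}|_{H_{[\phi]}}$ are isomorphisms. For smoothness I would use that the quotient map $F^{+}(N)\rightarrow Q$ is a principal $SO(q,\RB)$-bundle and so admits local smooth sections; over a trivialising neighbourhood such a section provides a representative $\phi$ varying smoothly with $[\phi]$, and since $d\pi_{Q}$ and $\phi\mapsto\phi^{-1}$ are smooth, so is $m^{H}$. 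The only genuine point, rather than a real obstacle, is the invariance under the $SO(q,\RB)$-action established in the second paragraph; once one notes that $SO(q,\RB)$ acts on $\RB^{q}$ by Euclidean isometries, the remaining verifications are routine.
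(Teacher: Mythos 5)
Your proposal is correct and takes essentially the same approach as the paper: well-definedness is established by exactly the same computation, namely that a change of representative $\phi\mapsto\phi\circ A$ with $A\in SO(q,\RB)$ replaces $\phi^{-1}$ by $A^{-1}\circ\phi^{-1}$ and the Euclidean inner product on $\RB^{q}$ is $SO(q,\RB)$-invariant, with the metric properties then following from linearity of $\phi^{-1}$ and of the fibre isomorphism induced by $d\pi_{Q}$. Your explicit verification of positive-definiteness (using that $\phi^{-1}$ and $d\pi_{Q}|_{H_{[\phi]}}$ are isomorphisms) and the smoothness argument via local sections of the principal $SO(q,\RB)$-bundle $F^{+}(N)\rightarrow Q$ only spell out details the paper leaves implicit.
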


\begin{proof}
Suppose we were to choose a different 
representation $\phi' = \phi\circ A$ of $[\phi]$, 
where $A$ is some matrix in $SO(q,\RB)$.  
Then by the invariance of the Euclidean inner product under 
special orthogonal transformations we have
\begin{align*}
(\phi')^{-1}(d\pi_{Q}(h_{1}))\cdot(\phi')^{-1}(d\pi_{Q}(h_{2})) 
&= (A^{-1}\phi^{-1}(d\pi_{Q}(h_{1})))\cdot(A^{-1}\phi^{-1}(d\pi_{Q}(h_{2})))\\ 
&= \phi^{-1}(d\pi_{Q}(h_{1}))\cdot\phi^{-1}(d\pi_{Q}(h_{2})),
\end{align*}
giving well-definedness.  That we have defined a  metric 
follows from the linearity of the maps $\phi$ 
and $d\pi_{Q}$, and the fact that the Euclidean inner product is a 
metric on $\RB^{q}$.
\end{proof}

Remarkably, holonomy translations are orthogonal with respect to 
this Euclidean structure of $H$.  

\begin{prop}
\label{triangularstructure}
The normal bundle $N_{Q}\rightarrow Q$ of the 
foliation $\FF_{Q}$ of $Q$ is a $\GG$-equivariant vector 
bundle over the $\GG$-space $Q$.  Moreover, with respect 
to the splitting $N_{Q} = VQ\oplus H$, for $u\in \GG$ and $[\phi]\in Q_{s(u)}$ the 
holonomy action $u_{*}:(N_{Q})_{[\phi]}\rightarrow (N_{Q})_{u\cdot[\phi]}$ 
has the form
\begin{equation}\label{matrix}
    u_* = \left(\begin{array}{cc} a(u) & c(u) \\ 0 & d(u)\end{array}\right),
\end{equation}
with $a(u):V_{[\phi]}Q\rightarrow V_{u\cdot[\phi]}Q$ and 
$d(u):H_{[\phi]}\rightarrow H_{u\cdot[\phi]}$ orthogonal and 
orientation-preserving.
\end{prop}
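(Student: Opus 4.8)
The plan is to deduce the $\GG$-equivariance and the upper-triangular form \eqref{matrix} from the structure already available on $F^{+}(N)$, and then to establish orthogonality of the two diagonal blocks separately: the statement for $a(u)$ follows immediately from Proposition \ref{verticalequi}, while the statement for $d(u)$ carries the real content. First I would observe that the quotient map $k:F^{+}(N)\rightarrow Q$ is a $\GG$-equivariant surjective submersion carrying $\FF_{F}$ to $\FF_{Q}$, so that $dk$ descends to a surjective $\GG$-equivariant bundle map $N_{F}\rightarrow N_{Q}$ sending the summand $VF^{+}(N)$ onto $VQ$ and restricting to an isomorphism $HF^{+}(N)/T\FF_{F}\cong H$. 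Since $N_{F}$ is $\GG$-equivariant, so is $N_{Q}$, which proves the first assertion. Because $\GG$ acts on $Q$ through maps $u\cdot:Q_{s(u)}\rightarrow Q_{r(u)}$ between the fibres of $\pi_{Q}$, the holonomy action preserves the vertical bundle $VQ$; and as $VQ\cap T\FF_{Q}=0$ (the map $d\pi_{Q}$ annihilates $VQ$ but is an isomorphism on $T\FF_{Q}$), the subbundle $VQ\subset N_{Q}$ is $\GG$-invariant, forcing the lower-left block of $u_{*}$ to vanish and giving the form \eqref{matrix}. Equivalently, this triangularity descends directly from the triangular form of the $\GG$-action on $N_{F}$ via $dk$.

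For the block $a(u):V_{[\phi]}Q\rightarrow V_{u\cdot[\phi]}Q$ I would simply invoke Proposition \ref{verticalequi}, whose proof shows that in suitable local trivialisations $u\cdot$ acts on the fibre $S^{+}_{q}=GL^{+}(q,\RB)/SO(q,\RB)$ as an orientation-preserving isometry, inducing a special orthogonal transformation on $VQ$. Hence $a(u)$ is orthogonal and orientation-preserving with no further work.

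The substance of the proof is the block $d(u):H_{[\phi]}\rightarrow H_{u\cdot[\phi]}$, which is the map induced on the quotient $N_{Q}/VQ\cong H$. The key input is that the foliated submersion $\pi_{Q}$ is $\GG$-equivariant, so that the induced map $d\pi_{Q}:N_{Q}\rightarrow N$ on normal bundles intertwines the holonomy actions, giving $d\pi_{Q}(d(u)h)=u_{*}(d\pi_{Q}h)$ for all $h\in H_{[\phi]}$, where on the right $u_{*}:N_{s(u)}\rightarrow N_{r(u)}$ is the holonomy action on $N$. Granting this, I would compute directly with the metric of Proposition \ref{bundleofmetrics}, using that the frame $u_{*}\circ\phi$ represents $u\cdot[\phi]$ by \eqref{uonfibres} and substituting the intertwining relation: for $h_{1},h_{2}\in H_{[\phi]}$,
\begin{align*}
    m^{H}_{u\cdot[\phi]}(d(u)h_{1},d(u)h_{2}) &=(u_{*}\circ\phi)^{-1}(u_{*}\,d\pi_{Q}h_{1})\cdot(u_{*}\circ\phi)^{-1}(u_{*}\,d\pi_{Q}h_{2})\\
    &=\phi^{-1}(d\pi_{Q}h_{1})\cdot\phi^{-1}(d\pi_{Q}h_{2})=m^{H}_{[\phi]}(h_{1},h_{2}),
\end{align*}
the factors $u_{*}$ cancelling since $(u_{*}\circ\phi)^{-1}=\phi^{-1}\circ u_{*}^{-1}$; thus $d(u)$ is orthogonal. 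In the frame coordinates $\phi^{-1}\circ d\pi_{Q}$ the same cancellation shows $d(u)$ is the identity on $\RB^{q}$, and since positively oriented frames identify the transverse orientation of $N$ with the standard orientation of $\RB^{q}$, the map $d(u)$ is orientation-preserving. The main obstacle is the intertwining relation $d\pi_{Q}\circ d(u)=u_{*}\circ d\pi_{Q}$: because the $\GG$-action on $N_{Q}$ is the holonomy action of $\FF_{Q}$ while that on $N$ is the holonomy of $\FF$, making it rigorous requires a careful argument in foliated charts that $\pi_{Q}$ carries $\FF_{Q}$-holonomy to $\FF$-holonomy compatibly with $d\pi_{Q}$, most cleanly by pushing the corresponding statement for $\pi_{F}$ on $F^{+}(N)$ through the quotient $k$ using $\pi_{Q}\circ k=\pi_{F}$.
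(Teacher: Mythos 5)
Your proposal is correct and takes essentially the same route as the paper: triangularity and the claim for $a(u)$ come from Proposition \ref{verticalequi}, and orthogonality of $d(u)$ is established by the identical computation with the metric of Proposition \ref{bundleofmetrics}, cancelling $(u_{*}\circ\phi)^{-1}$ against the intertwining relation $d\pi_{Q}\circ d(u)=u_{*}\circ d\pi_{Q}$. The only cosmetic difference is that the paper justifies equivariance of $N_{Q}$ by observing that the holonomy groupoid of $\FF_{Q}$ is precisely $Q\rtimes\GG$, and treats the intertwining relation as immediate from $\GG$-equivariance of the anchor map $\pi_{Q}$, whereas you descend both through the quotient $k:F^{+}(N)\rightarrow Q$.
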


\begin{proof}
The holonomy groupoid for the foliation $\FF_{Q}$ of $Q$ 
is precisely the groupoid $Q\rtimes \GG$, under which the 
normal bundle $N_{Q}\rightarrow Q$ is therefore equivariant.  
Thus $N_{Q}\rightarrow Q$ is a $\GG$-equivariant vector bundle over the $\GG$-space $Q$.

Proposition \ref{verticalequi} tells us that 
$a(u):V_{[\phi]}Q\rightarrow V_{u\cdot[\phi]}Q$ is 
orthogonal and orientation-preserving, and that the 
vertical bundle is preserved under holonomy translation, 
which accounts for the 0 appearing in the bottom left 
corner of \eqref{matrix}.  Since 
$\pi_{Q}:Q\rightarrow M$ is the anchor map for the 
$\GG$-space $Q$ it is $\GG$-equivariant, implying that the 
identification $d\pi_{Q}$ of fibres of $H$ with those of $N$ is 
also $\GG$-equivariant.  

That $d(u):H_{[\phi]}\rightarrow H_{u\cdot[\phi]}$ 
is orientation-preserving is then a consequence of the fact that $d(u)$ 
may be identified with the orientation-preserving action of $u$ 
on the fibres of $N$.  That $d(u)$ is orthogonal is a consequence 
of the following calculation for $h_{1},h_{2}\in H_{[\phi]}$:
\begin{align*}
    m^{H}_{u\cdot[\phi]}(d(u)h_{1},d(u)h_{2}) 
    =& (u_{*}\circ\phi)^{-1}((d\pi_{Q}\circ d(u))(h_{1}))\cdot(u_{*}\circ\phi)^{-1}((d\pi_{Q}\circ d(u))(h_{1}))\\ 
    =& (\phi^{-1}\circ u_{*}^{-1})((u_{*}\circ d\pi_{Q})(h_{1}))\cdot(\phi^{-1}\circ u_{*}^{-1})((u_{*}\circ d\pi_{Q})(h_{2}))\\ 
    =& \phi^{-1}(d\pi_{Q}(h_{1}))\cdot\phi^{-1}(d\pi_{Q}(h_{2})) 
    = m^{H}_{[\phi]}(h_{1},h_{2}),
\end{align*}
where on the second line, we have used the equivariance of the 
anchor map $d\pi_{Q}$ between $H$ and $N$.
\end{proof}

The triangular shape of the matrix in 
Proposition \ref{triangularstructure} is what is 
referred to as an \emph{almost isometric} 
or \emph{triangular structure} by Connes \cite{cyctrans} and Connes-Moscovici \cite{CM} respectively.

The map $c(u):H_{[\phi]}\rightarrow V_{u\cdot[\phi]}Q$, for $u\in \GG$ and $[\phi]\in Q_{s(u)}$, is where 
the interesting representation theory is encoded.  Currently, 
however, the range of $c(u)$ is too high in dimension to 
be of much use, and these extra dimensions need to be ``traced out".  
Observing that there is indeed a canonical trace 
$\tr_{F^{+}(N)}:VF^{+}(N)\rightarrow \RB$ induced fibrewise by 
the usual matrix trace on $\mathfrak{gl}(q,\RB) = M_{q}(\RB)$, 
we now check that we can apply this map to $VQ$ also.

\begin{lemma}
\label{trace}
The map $\tr_{F^{+}(N)}:VF^{+}(N)\rightarrow\RB$ 
descends to a well-defined map $\tr_{Q}:VQ\rightarrow\RB$ 
for which $\tr_{Q}\circ a(u) = \tr_{Q}$ for all $u\in \GG$.
\end{lemma}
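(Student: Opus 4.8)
The plan is to reduce everything to two elementary properties of the matrix trace on $\mathfrak{gl}(q,\RB) = M_{q}(\RB)$: that it vanishes on antisymmetric matrices, and that it is invariant under conjugation. Throughout I would work in the global trivialisation $VF^{+}(N)\cong F^{+}(N)\times\mathfrak{gl}(q,\RB)$, $(\phi,v)\mapsto v_{\phi}$, under which $\tr_{F^{+}(N)}(v_{\phi}) = \tr(v)$ is literally the matrix trace, together with the quotient map $k:F^{+}(N)\rightarrow Q$, whose vertical differential restricts to a fibrewise-surjective map $dk:VF^{+}(N)\rightarrow VQ$ with kernel the tangent spaces to the $SO(q,\RB)$-orbits.

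First I would establish the \emph{descent}. Differentiating the orbit map shows that the kernel of $dk$ restricted to $V_{\phi}F^{+}(N)$ is $\{v_{\phi}:v\in\mathfrak{so}(q,\RB)\}$, i.e.\ in the trivialisation the kernel is exactly the antisymmetric matrices, on which $\tr$ vanishes; hence $\tr_{F^{+}(N)}$ factors through $dk$ at each fixed $\phi$. To handle the remaining ambiguity coming from the choice of representative $\phi$ within an $SO(q,\RB)$-orbit, I would compute the effect of the right action $R_{A}$, $A\in SO(q,\RB)$, on the trivialisation. A one-line calculation using $\exp(tv)A = A\exp(t\,A^{-1}vA)$ gives $(R_{A})_{*}v_{\phi} = (A^{-1}vA)_{\phi A}$, so $\tr_{F^{+}(N)}\circ(R_{A})_{*} = \tr_{F^{+}(N)}$ by conjugation-invariance of $\tr$. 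Since $k\circ R_{A} = k$, and hence $dk\circ(R_{A})_{*} = dk$, these two facts together imply that for $w\in V_{[\phi]}Q$ the quantity $\tr_{F^{+}(N)}(\tilde{w})$ is independent of the choice of lift $\tilde{w}\in VF^{+}(N)$ with $dk(\tilde{w}) = w$ (whether the lift is taken over $\phi$ or over any $\phi A$). Concretely, two such lifts over frames $\phi_{1}$ and $\phi_{2} = \phi_{1}A$ can be compared by applying $(R_{A})_{*}$, after which they differ by a kernel element; invariance under $(R_{A})_{*}$ and vanishing on $\mathfrak{so}(q,\RB)$ then force equal traces. Thus $\tr_{Q}(w):=\tr_{F^{+}(N)}(\tilde{w})$ is well defined, and fibrewise linear since both $\tr_{F^{+}(N)}$ and $dk$ are.

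Next I would prove the \emph{holonomy invariance}, which is then almost immediate. Equation \eqref{uonv} records that in the trivialisation the holonomy action on $VF^{+}(N)$ is $u_{*}(\phi,v) = (u\cdot\phi,v)$, leaving the $\mathfrak{gl}(q,\RB)$-component untouched; consequently $\tr_{F^{+}(N)}\circ u_{*} = \tr_{F^{+}(N)}$ on $VF^{+}(N)$. Because the left $\GG$-action on $F^{+}(N)$ commutes with the right $SO(q,\RB)$-action, we have $k\circ(u\,\cdot) = (u\,\cdot)\circ k$, so differentiating the vertical parts yields $dk\circ u_{*} = a(u)\circ dk$. Hence, lifting $w\in V_{[\phi]}Q$ to $\tilde{w}$ and noting that $u_{*}\tilde{w}$ is a lift of $a(u)(w)$, I obtain $\tr_{Q}(a(u)(w)) = \tr_{F^{+}(N)}(u_{*}\tilde{w}) = \tr_{F^{+}(N)}(\tilde{w}) = \tr_{Q}(w)$.

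The only place demanding genuine care is the descent, specifically the bookkeeping of the two independent sources of non-uniqueness in the lift: the kernel of $dk$ at a fixed frame, and the choice of frame within the $SO(q,\RB)$-orbit. Once the transformation rule $(R_{A})_{*}v_{\phi} = (A^{-1}vA)_{\phi A}$ is in hand, both collapse to the vanishing of $\tr$ on $\mathfrak{so}(q,\RB)$ and its conjugation-invariance. The identity $\tr_{Q}\circ a(u) = \tr_{Q}$ is then a formal consequence of \eqref{uonv} together with the intertwining of $dk$ with the two commuting actions, and presents no real difficulty.
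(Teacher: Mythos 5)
Your proof is correct and takes essentially the same route as the paper's: the computation $(dR_{A})_{\phi}(v_{\phi}) = (A^{-1}vA)_{\phi\cdot A}$, conjugation-invariance of the matrix trace, and the observation from \eqref{uonv} that $u_{*}$ fixes the $\mathfrak{gl}(q,\RB)$-component in the trivialisation. If anything you are more thorough than the paper, whose proof checks only $SO(q,\RB)$-equivariance and leaves implicit the vanishing of $\tr$ on $\ker(dk)\cong\mathfrak{so}(q,\RB)$ --- a point genuinely needed for descent to $VQ$ (as opposed to descent to the larger quotient $VF^{+}(N)/SO(q,\RB)$, whose fibres have dimension $q^{2}$ rather than $q(q+1)/2$) and which your bookkeeping of the two sources of lift-ambiguity handles explicitly.
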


\begin{proof}
For $A\in GL^{+}(q,\RB)$, we denote by 
$R_{A}:F^{+}(N)\rightarrow F^{+}(N)$ the map 
$\phi\mapsto \phi\cdot A$.  By definition, 
the action of $A\in SO(q,\RB)$ on $VF^{+}(N)$ is then given for $\phi\in F^{+}(N)$ and $v_{\phi}\in V_{\phi}F^{+}(N)$ by
\[
    v_{\phi}\cdot A:=(dR_{A})_{\phi}(v_{\phi}).
\]
We compute
\[
    (dR_{A})_{\phi}(v_{\phi}) 
    = \frac{d}{dt}(\phi\cdot\exp(tv)\cdot A)\bigg|_{t=0} 
    = \frac{d}{dt}((\phi\cdot A)\cdot(A^{-1}\exp(tv)A))\bigg|_{t=0} 
    = (A^{-1}vA)_{\phi\cdot A},
\]
from which we deduce that the action of $A\in SO(q,\RB)$ 
in the trivialisation $VF^{+}(N) =F^{+}(N)\times\mathfrak{gl}(q,\RB)$ is given by
\[
    (\phi,v)\cdot A = (\phi\cdot A,A^{-1}vA)
\]
for all $\phi\in F^{+}(N)$, $v\in\mathfrak{gl}(q,\RB)$.  Now, $\tr_{F^{+}(N)}:F^{+}(N)\times\mathfrak{gl}(q,\RB)\rightarrow \RB$ 
is by definition
\[
    \tr_{F^{+}(N)}(\phi,v):=\tr(v),
\]
with $\tr$ denoting the usual matrix trace on $q\times q$ matrices, 
and with the range $\RB$ of $\tr_{F^{+}(N)}$ carrying the trivial 
action of $SO(q,\RB)$.  Then since the matrix trace is 
invariant under conjugation, we see that $\tr_{F^{+}(N)}$ is equivariant:
\[
    \tr_{F^{+}(N)}((\phi,v)\cdot A) = \tr(A^{-1}vA) = \tr(v) 
    = \tr_{F^{+}(N)}(\phi,v)\cdot A,
\]
and so descends to a well-defined 
map $\tr_{Q}:VQ\rightarrow\RB$.

For the second assertion, note that since $u$ commutes 
with the quotient map $Q:F^+N\to Q$ and since $u_{*}$ acts as the identity on the fibres 
of $VF^{+}(N) = F^{+}(N)\times\RB^{q^2}$ by \eqref{uonv}, we have
\[
    \tr_{Q}\circ a(u)\circ dQ = \tr_{Q}\circ dQ\circ \id = \tr_{Q}\circ dQ.
\]
Since $dQ$ is surjective, we conclude that
\[
    \tr_{Q}\circ a(u) = \tr_{Q}
\]
as claimed.
\end{proof}

\begin{rmk}\normalfont
	Note that what makes Lemma \ref{trace} possible is the fact that the map $v\mapsto \tr(v)$ on $\mathfrak{gl}(q,\RB)$ is invariant under conjugation by invertible matrices.  Thus in fact we could replace $\tr$ with any other invariant polynomial on $\mathfrak{gl}(q,\RB)$, parallelling the Chern-Weil construction of characteristic classes, and still obtain a well-defined (but no longer necessarily linear) map on the vertical tangent bundle of the Connes fibration.  This observation is due to M. T. Benameur.
\end{rmk}

Let us put Lemma \ref{trace} to use in simplifying the groupoid representation theory.  For $u\in \GG$ and $[\phi]\in Q_{s(u)}$, define
\[
    \delta(u):=\tr_{Q}\circ c(u):H_{[\phi]}\rightarrow\RB.
\]
This $\delta(u)$ is linear, and so can be regarded as an 
element of $H_{[\phi]}^{*}$.  We also define
\[
    \theta(u):=d(u^{-1})^{t}:H_{[\phi]}^{*}\rightarrow H_{u\cdot[\phi]}^{*},
\]
the action on the covector bundle for $H$ coming from the 
transpose of $d(u^{-1}):H_{u\cdot[\phi]}\rightarrow H_{[\phi]}$.  We have the following ``$ax+b$ group"-type transformation laws.

\begin{lemma}
\label{identities}
For all $u,v\in \GG^{(2)}$, we have
\[
    \theta(uv) = \theta(u)\theta(v),
\quad\mbox{and}\quad
    \delta(uv) = \delta(v)+\theta(v^{-1})\delta(u).
\]
\end{lemma}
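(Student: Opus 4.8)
The plan is to derive both identities formally from a single input: the multiplicativity $(uv)_* = u_* \circ v_*$ of the holonomy action on the $\GG$-equivariant bundle $N_Q$ (Proposition \ref{triangularstructure}). Writing both sides in the splitting $N_Q = VQ \oplus H$ and multiplying the two upper-triangular $2\times 2$ matrices, comparison of blocks immediately yields
\[
a(uv) = a(u)a(v), \qquad d(uv) = d(u)d(v), \qquad c(uv) = a(u)c(v) + c(u)d(v).
\]
Everything else is then a matter of applying $\tr_Q$ and transposing while tracking base points.

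For the cocycle identity for $\theta$, I would note that since $(u,v)\in\GG^{(2)}$ the pair $(v^{-1},u^{-1})$ is also composable, so multiplicativity of $d$ gives $d((uv)^{-1}) = d(v^{-1}u^{-1}) = d(v^{-1})d(u^{-1})$. As the transpose reverses composition order,
\[
\theta(uv) = d((uv)^{-1})^{t} = \big(d(v^{-1})d(u^{-1})\big)^{t} = d(u^{-1})^{t}d(v^{-1})^{t} = \theta(u)\theta(v).
\]

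For the second identity I would fix $[\phi]\in Q_{s(v)}$ and apply $\tr_Q$ to the block expression for $c(uv)$. The first term contributes $\tr_Q\circ a(u)\circ c(v) = \tr_Q\circ c(v) = \delta(v)$, using Lemma \ref{trace}, which says exactly that $\tr_Q\circ a(u) = \tr_Q$. The second term contributes $\tr_Q\circ c(u)\circ d(v) = \delta(u)\circ d(v)$, where $\delta(u)\in H_{v\cdot[\phi]}^{*}$ (since $v\cdot[\phi]\in Q_{r(v)} = Q_{s(u)}$) and $d(v)\colon H_{[\phi]}\to H_{v\cdot[\phi]}$. Precomposition of a covector with $d(v)$ is by definition the action of the transpose $d(v)^{t} = \theta(v^{-1})$ on the dual, so $\delta(u)\circ d(v) = \theta(v^{-1})\delta(u)$, and summing the two terms gives $\delta(uv) = \delta(v) + \theta(v^{-1})\delta(u)$.

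The computation is short, so the only real obstacle is the bookkeeping: one must check that each map is composed over the correct fibre---so that $a(u)$ and $c(u)$ really act at $v\cdot[\phi]$ and $\delta(u)$ is transported back to $H_{[\phi]}^{*}$---and one must keep in mind that the transpose reverses composition order, which is precisely what produces the inverses and the skew, $ax+b$-type shape of the formula for $\delta$.
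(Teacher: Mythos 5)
Your proposal is correct and takes essentially the same route as the paper: both arguments multiply the two upper-triangular matrices coming from the multiplicativity of $u_*$ on $N_Q = VQ\oplus H$ (Proposition \ref{triangularstructure}) to get $d(uv)=d(u)d(v)$ and $c(uv)=a(u)c(v)+c(u)d(v)$, then apply $\tr_{Q}$ and invoke Lemma \ref{trace} to kill the $a(u)$ factor and identify $\tr_{Q}\circ c(u)\circ d(v)$ with $\theta(v^{-1})\delta(u)$. The only difference is that you spell out the fibre bookkeeping and the order-reversal of the transpose, which the paper leaves implicit.
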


\begin{proof}
These identities follow from the triangular structure of the 
matrices \eqref{matrix} and Lemma \ref{trace}.  Specifically, 
since $\GG$ acts on $N_{Q}$ we have
\[
    \left(\begin{array}{cc} a(uv) & c(uv) \\ 0 & d(uv)\end{array}\right) 
    = \left(\begin{array}{cc} a(u) & c(u) \\ 0 & d(u)\end{array}\right)\left(\begin{array}{cc} a(v) & c(v) \\ 0 & d(v)\end{array}\right) 
    = \left(\begin{array}{cc} a(u)a(v) & a(u)c(v) + c(u)d(v) \\ 0 & d(u)d(v)\end{array}\right),
\]
from which we immediately deduce that $d(uv) = d(u)d(v)$ 
and hence $\theta(uv) = \theta(u)\theta(v)$.  We also calculate
\begin{align*}
    \delta(uv) =& \tr_{Q}\circ c(uv) = \tr_{Q}\circ a(u)\circ c(v)+\tr_{Q}\circ c(u)\circ d(v)\\ =& \tr_{Q}\circ c(v) + \tr_{Q}\circ c(u)\circ d(v) = \delta(v) + \theta(v^{-1})\delta(u),
\end{align*}
using Lemma \ref{trace} for the third equality, giving the desired identities.
\end{proof}

\subsection{The Vey Kasparov module}

We now go about constructing a second Kasparov module, referred to in this paper as the Vey Kasparov module since it appears to be analogous to the Vey homomorphism considered in previous work \cite{hurd, Duminy}.  Our first job in constructing a second Kasparov module is to endow the total space $H^{*}$ of the horizontal covector bundle $\pi_{H^{*}}:H^{*}\rightarrow Q$ with an action of $\GG$ that encodes both $\theta$ and $\delta$ from Lemma \ref{identities}.

\begin{prop}\label{HGspace}
	For $u\in \GG$ and $\eta\in H^{*}|_{Q_{s(u)}}$, the formula
	\[
	u\cdot\eta:=\theta(u)\eta+\delta(u^{-1})
	\]
	determines the structure of a $\GG$-space on $H^{*}$ with anchor map $\pi_{Q}\circ\pi_{H^{*}}:H^{*}\rightarrow M$.
\end{prop}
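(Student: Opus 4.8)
The plan is to verify directly the three axioms in the definition of a $\GG$-space for $Y = H^{*}$ with anchor map $a = \pi_{Q}\circ\pi_{H^{*}}$, having first checked that the formula is well-defined on fibres. The only axiom of substance will be associativity, where the cocycle identities of Lemma \ref{identities} do the work; the identity axiom and Axiom 1 reduce to bookkeeping about which fibre each term lives in.

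First I would track the fibres. Given a composable pair $(u,\eta)$, so that $s(u) = a(\eta)$, we have $\eta\in H^{*}_{[\phi]}$ for some $[\phi]\in Q_{s(u)}$. The term $\theta(u)\eta$ lands in $H^{*}_{u\cdot[\phi]}$, and $u\cdot[\phi]\in Q_{r(u)}$ since $Q$ is already a $\GG$-space. The summand $\delta(u^{-1}) = \tr_{Q}\circ c(u^{-1})$, evaluated at the point $u\cdot[\phi]\in Q_{r(u)} = Q_{s(u^{-1})}$, also lies in $H^{*}_{u\cdot[\phi]}$, so the sum is a well-defined element of $H^{*}_{u\cdot[\phi]}$. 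This immediately gives Axiom 1, namely $a(u\cdot\eta) = \pi_{Q}(u\cdot[\phi]) = r(u)$.

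For the identity axiom, I would use that for a unit $1_{x}\in\GG$ the holonomy translation on $N_{Q}$ is the identity, so in the splitting $N_{Q} = VQ\oplus H$ one has $d(1_{x}) = \id$ and $c(1_{x}) = 0$. Hence $\theta(1_{x}) = d(1_{x})^{t} = \id$ and $\delta(1_{x}) = \tr_{Q}\circ c(1_{x}) = 0$, so that $1_{x}\cdot\eta = \theta(1_{x})\eta + \delta(1_{x}) = \eta$ as required.

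The heart of the argument is associativity, $(uv)\cdot\eta = u\cdot(v\cdot\eta)$. Expanding the right-hand side gives $\theta(u)\theta(v)\eta + \theta(u)\delta(v^{-1}) + \delta(u^{-1})$, while the left-hand side is $\theta(uv)\eta + \delta(v^{-1}u^{-1})$. The homogeneous terms agree by the multiplicativity $\theta(uv) = \theta(u)\theta(v)$ from Lemma \ref{identities}, so it remains to match the inhomogeneous terms, that is, to show $\delta(v^{-1}u^{-1}) = \delta(u^{-1}) + \theta(u)\delta(v^{-1})$. This is exactly the $\delta$-cocycle identity of Lemma \ref{identities} applied to the composable pair $(v^{-1},u^{-1})$, which is composable precisely because $(u,v)\in\GG^{(2)}$, together with $\theta((u^{-1})^{-1}) = \theta(u)$. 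I expect this matching of which group element plays which role in the cocycle identity to be the only delicate point, but once it is set up the identities close. Smoothness of the anchor and action maps then follows from smoothness of the underlying $\GG$-action on $Q$ and the smooth dependence of $\theta$ and $\delta$ on $u$ through the bundle maps $d$, $c$ and the linear map $\tr_{Q}$.
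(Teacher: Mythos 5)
Your proposal is correct and follows essentially the same route as the paper: the anchor and unit axioms are routine (the paper deduces $\theta(1_{x})=\operatorname{id}$ and $\delta(1_{x})=0$ from Lemma \ref{identities}, where you argue directly from $c(1_{x})=0$, $d(1_{x})=\operatorname{id}$), and associativity is exactly the paper's computation, matching $\theta(uv)=\theta(u)\theta(v)$ with $\delta(v^{-1}u^{-1})=\delta(u^{-1})+\theta(u)\delta(v^{-1})$ via the cocycle identity applied to the composable pair $(v^{-1},u^{-1})$. Your careful fibre bookkeeping, showing both $\theta(u)\eta$ and $\delta(u^{-1})$ lie in $H^{*}_{u\cdot[\phi]}$, is a welcome addition the paper leaves implicit.
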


\begin{proof}
	It is clear that $(\pi_{Q}\circ\pi_{H^{*}})(u\cdot\eta) = r(u)$ for all $u\in \GG$ and $\eta\in H^{*}|_{Q_{s(u)}}$, and since by Lemma \ref{identities} $\theta$ is the identity on units and $\delta$ is zero on units we get $(\pi_{Q}\circ\pi_{H^{*}})(\eta)\cdot\eta = \eta$ for all $\eta$.  Thus it remains only to check that $(uv)\cdot\eta= u\cdot (v\cdot\eta)$ for all $(u,v)\in \GG^{(2)}$ and $\eta\in H^{*}|_{Q_{s(v)}}$.  For this we simply have
	\begin{align*}
	(uv)\cdot\eta =& \theta(uv)\eta+\delta(v^{-1}u^{-1}) = \theta(u)\big(\theta(v)\eta+\delta(v^{-1})\big)+\delta(u^{-1})= u\cdot(v\cdot \eta),
	\end{align*}
	with the second equality being a consequence of Lemma \ref{identities}.
\end{proof}

We can now construct another dual Dirac class in much 
the same way as we did for the Connes fibration.  
Consider the bundle $VH^{*}:=\ker(d\pi_{H^{*}})$ of 
vertical tangent vectors over the horizontal covector bundle 
$\pi_{H^{*}}:H^{*}\rightarrow Q$, and denote by $\pi_{H}:H\rightarrow Q$ the projection for the horizontal bundle.  Since the fibres of $H^{*}$ are vector spaces, we have $V_{\eta}H^{*}_{[\phi]}\cong H^{*}_{[\phi]}$ for all $[\phi]\in Q$ and $\eta\in H^{*}_{[\phi]}$.  Thus the dual space $V^{*}_{\eta}H^{*}_{[\phi]}$ is a copy of $H_{[\phi]}$ and so we can write $V^{*}H^{*}$ as the fibered product
\[
	V^{*}H^{*}\cong H^{*}\times_{\pi_{H^{*}},\pi_{H}}H,
\]
regarded as a vector bundle over $H^{*}$ by using the projection onto the first factor.  Since $H$ is a $\GG$-equivariant Euclidean bundle over $Q$ via the map $d$ in Proposition \ref{triangularstructure}, for all $u\in \GG$, $\eta\in H^{*}|_{Q_{s(u)}}$ and $h\in H|_{Q_{s(u)}}$, the formula
\[
u_{*}(\eta,h):=(u\cdot\eta,d(u)h) = (\theta(u)\eta+\delta(u^{-1}),d(u)h)
\]
defines on $V^{*}H^{*}$ the structure of a $\GG$-equivariant Euclidean bundle over the $\GG$-space $H^{*}$.  Then by functoriality $\Cliff(V^{*}H^{*})$ is a $\GG$-equivariant bundle over $H^{*}$, and we denote the 
action of $u\in \GG$ on $k\in \Cliff(V^{*}H^{*}|_{H^{*}_{[\phi]}})$ 
by $k\mapsto u_{\diamond} k$ for all $[\phi]\in Q_{s(u)}$.  Using these facts together with Proposition \ref{HGspace}, the following result is clear.

\begin{prop}
	The formula
	\[
	\alpha^{2}_{u}(\zeta)(\eta):=u_{\diamond}\zeta(u^{-1}\cdot\eta) = u_{\diamond}\zeta\big(\theta(u^{-1})\eta+\delta(u)\big)
	\]
	defined for $\zeta\in \CCl(V^{*}H^{*})$, $u\in \GG$ and $\eta\in H^{*}_{[\phi]}$ with $[\phi]\in Q_{r(u)}$, determines the structure of a $\GG$-algebra on $\CCl(V^{*}H^{*})$.
\end{prop}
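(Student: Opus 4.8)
The plan is to recognise $\CCl(V^{*}H^{*})$ as a $C_{0}(M)$-algebra and to check that $\alpha^{2}$ satisfies the axioms in the definition of a $\GG$-algebra, exploiting the fact that all the geometric input has already been assembled. First I would note that the anchor map $\pi_{Q}\circ\pi_{H^{*}}:H^{*}\to M$ equips $\CCl(V^{*}H^{*})$ with a $C_{0}(M)$-structure via $(f\cdot\zeta)(\eta):=f((\pi_{Q}\circ\pi_{H^{*}})(\eta))\zeta(\eta)$, whose fibre over $x\in M$ is $\CCl(V^{*}H^{*}|_{H^{*}_{x}})$; nondegeneracy gives $C_{0}(M)\cdot\CCl(V^{*}H^{*})=\CCl(V^{*}H^{*})$. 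The formula for $\alpha^{2}$ is then exactly the Clifford-bundle analogue of the action $\alpha^{1}$ of \eqref{actiononQ}, now with $H^{*}$ in place of $Q$, with the $\GG$-space structure supplied by Proposition \ref{HGspace}, and with $V^{*}H^{*}$ the $\GG$-equivariant Euclidean bundle constructed immediately above.

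The bulk of the verification is routine once the fibrewise maps are understood. Since $u_{*}:V^{*}H^{*}|_{H^{*}_{s(u)}}\to V^{*}H^{*}|_{H^{*}_{r(u)}}$ acts by the orthogonal map $d(u)$ on fibres (Proposition \ref{triangularstructure}), functoriality of the complex Clifford algebra — that is, Lemma \ref{cliffacthom} in its complexified form — makes each $u_{\diamond}$ a $*$-isomorphism of $\Cliff$-fibres preserving the even/odd $\ZB_{2}$-grading. Hence $\alpha^{2}_{u}$ is a grading-preserving $*$-homomorphism on each fibre, it is $C_{0}$-linear over the base because it only reindexes the base point by $u^{-1}\cdot(-)$, and it is invertible with inverse $\alpha^{2}_{u^{-1}}$. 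Assembling these over the Hausdorff charts $U_{i}$ produces grading-preserving $C_{0}(U_{i})$-isomorphisms $\alpha^{2,i}:s_{i}^{*}\CCl(V^{*}H^{*})\to r_{i}^{*}\CCl(V^{*}H^{*})$; continuity within each $U_{i}$ is inherited from smoothness of the $\GG$-actions on $H^{*}$ and on $\Cliff(V^{*}H^{*})$, and agreement on overlaps $U_{i}\cap U_{j}$ is immediate because $\alpha^{2}_{u}$ depends only on $u$.

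The one substantive point is the cocycle identity $\alpha^{2}_{uv}=\alpha^{2}_{u}\circ\alpha^{2}_{v}$. For $\eta\in H^{*}_{[\phi]}$ with $[\phi]\in Q_{r(u)}$ I would expand both sides as
\[
\alpha^{2}_{uv}(\zeta)(\eta)=(uv)_{\diamond}\,\zeta\big((uv)^{-1}\cdot\eta\big),
\qquad
(\alpha^{2}_{u}\circ\alpha^{2}_{v})(\zeta)(\eta)=u_{\diamond}v_{\diamond}\,\zeta\big(v^{-1}\cdot(u^{-1}\cdot\eta)\big),
\]
and then match them using $(uv)_{\diamond}=u_{\diamond}\circ v_{\diamond}$, which holds because $\Cliff(V^{*}H^{*})$ is a genuine $\GG$-equivariant bundle, together with $(uv)^{-1}\cdot\eta=v^{-1}\cdot(u^{-1}\cdot\eta)$, which is precisely the associativity of the $\GG$-action on $H^{*}$ established in Proposition \ref{HGspace}. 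Unwinding the latter through the defining formula $u\cdot\eta=\theta(u)\eta+\delta(u^{-1})$ reduces that associativity to the $ax+b$-type laws $\theta(uv)=\theta(u)\theta(v)$ and $\delta(uv)=\delta(v)+\theta(v^{-1})\delta(u)$ of Lemma \ref{identities}, so no fresh computation is needed.

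There is no deep obstacle here: the difficulty was front-loaded into Propositions \ref{HGspace} and \ref{triangularstructure} and Lemma \ref{identities}, which encode all the representation-theoretic content of $\theta$ and $\delta$. The only place demanding genuine care is confirming that the orthogonality of $d(u)$ is what makes $u_{\diamond}$ a well-defined grading-preserving $*$-automorphism of the Clifford fibres, so that $\alpha^{2}_{u}$ really lands in $\CCl(V^{*}H^{*})$ rather than merely in the sections of some larger endomorphism bundle; once this is in hand, the proposition follows by directly reading off the axioms in the definition of a $\GG$-algebra.
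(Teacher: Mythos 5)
Your proposal is correct and takes essentially the same route as the paper, which simply declares the result ``clear'' from Proposition \ref{HGspace} together with the $\GG$-equivariant bundle structure on $\Cliff(V^{*}H^{*})$: the only substantive point is the cocycle identity $\alpha^{2}_{uv}=\alpha^{2}_{u}\circ\alpha^{2}_{v}$, which you reduce, exactly as the authors do, via $(uv)_{\diamond}=u_{\diamond}\circ v_{\diamond}$ and the action law on $H^{*}$ to the $ax+b$-type identities of Lemma \ref{identities}. The remaining verifications you spell out (grading-preservation, $C_{0}$-linearity, agreement on chart overlaps) are the routine details the paper leaves implicit, and they are handled correctly.
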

We now come to the definition of an appropriate Hilbert module.  Let
\[
    E^{2}:=\Lambda^{*}(V^{*}H^{*})\otimes\CB
\]
be the complexified exterior algebra bundle of $V^{*}H^{*}$ over $H^{*}$, 
and define
\[
    X_{E^{2}}:=\Gamma_{0}(H^{*};E^{2}),
\]
which is a Hilbert $\CCl(V^{*}H^{*})$-module whose structure as such is determined in the same way as for $X_{E^{1}}$ using the identification of $E^{2}$ with $\Cliff(V^{*}H^{*})$ as vector bundles.

By equivariance of $V^{*}H^{*}$ over $H^{*}$ and functoriality, for $u\in \GG$, $[\phi]\in Q_{s(u)}$ and $\eta\in H^{*}_{[\phi]}$ 
we obtain a unitary holonomy transport map 
$u_{*}:E^{2}_{\eta}\rightarrow E^{2}_{u\cdot \eta}$ and an isomorphism 
$W^{2}_{u}:\Gamma_{0}(H^{*}_{[\phi]};E^{2}|_{H^{*}_{[\phi]}})
\rightarrow\Gamma_{0}(H^{*}_{u\cdot[\phi]};E^{2}|_{H^{*}_{u\cdot[\phi]}})$ 
of Banach spaces defined by
\[
    (W^{2}_{u}\zeta)(\eta):=u_{*}\zeta(u^{-1}\cdot\eta) = u_{*}\zeta\big(\theta(u^{-1})\eta+\delta(u)\big).
\]
Using Lemma \ref{cliffacthom}, we observe that
\begin{align*}
\langle W^{2}_{u}\zeta_{1},W^{2}_{u}\zeta_{2}\rangle_{\CCl(V^{*}H^{*})_{r(u)}}(\eta) 
	&= u_{\diamond}\big\langle \zeta_{1}\big(\theta(u^{-1})\eta+\delta(u)\big),\zeta_{2}\big(\theta(u^{-1})\eta+\delta(u)\big)\big\rangle\\
	&= \alpha^{2}_{u}(\langle\zeta_{1},\zeta_{2}\rangle_{\CCl(V^{*}H^{*})_{s(u)}})(\eta)
	\end{align*}
for all $u\in \GG$, $[\phi]\in Q_{r(u)}$ and $\eta\in H^{*}_{[\phi]}$, 
so $(X_{E^{2}},W^{2})$ is a $\GG$-Hilbert $\CCl(V^{*}H^{*})$-module.

We can define an unbounded operator $B_{2}$ on the dense 
submodule $X^{c}_{E^{2}} = \Gamma_{c}(H^{*};E^{2})$ of $X_{E^{2}}$ 
by the formula
\[
    (B_{2}\zeta)(\eta):=c_{L}(\eta)\zeta(\eta),
\]
where for $c_{L}(\eta)$ we regard $\eta\in H^{*}$ as a vertical covector in $V^{*}H^{*} = H^{*}\times_{\pi_{H^{*}},\pi_{H}}H$ using the Euclidean metric on $H$.

Finally, we take $m^{2}$ to be the representation of $C_{0}(Q)$ on $X_{E^{2}}$ defined by
\[
    m^{2}(f)\zeta(\eta):=f(\pi_{H^{*}}(\eta))\zeta(\eta).
\]
Using the fact that $\pi_{H^{*}}$ is an equivariant map and that $\pi_{H^{*}}(\eta+\eta') = \pi_{H^{*}}(\eta) = [\phi]$ for all $[\phi]\in Q$ and  $\eta,\eta'\in H^{*}_{[\phi]}$, a routine calculation shows that $m^{2}$ is an equivariant representation.

\begin{prop}
\label{B2}
The triple $(C_{0}(Q),{}_{m^{2}}X_{E^{2}},B_{2})$ is an 
unbounded $\GG$-equivariant Kasparov 
$C_{0}(Q)$-$\CCl(V^{*}H^{*})$-module, defining a class
\[
    [B_{2}]\in KK^{\GG}(C_{0}(Q),\CCl(V^{*}H^{*})).
\]
\end{prop}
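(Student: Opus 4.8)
The plan is to follow the template of Proposition \ref{B1} closely, since $(C_{0}(Q),{}_{m^{2}}X_{E^{2}},B_{2})$ has the same dual-Dirac shape: $B_{2}$ is fibrewise Clifford multiplication by the tautological covector $\eta$, while $m^{2}$ acts by scalars pulled back along $\pi_{H^{*}}$. First I would establish self-adjointness and regularity via the localization criterion of \cite[Th\'eor\`eme 1.18]{localglobal}: for each $\eta\in H^{*}$ the localization $(X_{E^{2}})_{\eta}$ is the finite-dimensional Hilbert space $\Lambda^{*}(V^{*}_{\eta}H^{*})\otimes\CB$, on which the localized operator $(B_{2})_{\eta} = c_{L}(\eta)$ is manifestly self-adjoint, being Clifford multiplication by the real covector $\eta$. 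Oddness for the $\ZB_{2}$-grading and commutation with the right $\CCl(V^{*}H^{*})$-action are immediate from the fact that $c_{L}$ shifts degree by one and commutes with $c_{R}$.

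For compactness, the Clifford relation gives $c_{L}(\eta)^{2} = \|\eta\|^{2}$, where $\|\cdot\|$ is the norm from the Euclidean metric on $H$, so $m^{2}(f)(1+B_{2}^{2})^{-\frac{1}{2}}$ is pointwise multiplication by the scalar $f(\pi_{H^{*}}(\eta))(1+\|\eta\|^{2})^{-\frac{1}{2}}$. Since $f\in C_{0}(Q)$ vanishes at infinity on the base while $(1+\|\eta\|^{2})^{-\frac{1}{2}}$ vanishes at infinity along the fibres of $H^{*}\rightarrow Q$, this product lies in $C_{0}(H^{*})$ and the operator is compact on $X_{E^{2}}$, exactly as in Proposition \ref{B1}. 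The commutators $[B_{2},m^{2}(f)]$ vanish identically, because $B_{2}$ is fibrewise Clifford multiplication and $m^{2}(f)$ is multiplication by the scalar $f(\pi_{H^{*}}(\eta))$.

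The one computation requiring care is equivariance (Property 4 of Definition \ref{unbdd}). Mimicking the $B_{1}$ calculation and using the compatibility \eqref{cL}, I expect
\[
(B_{2}-W^{2}_{u}B_{2}W^{2}_{u^{-1}})\zeta(\eta) = \big(c_{L}(\eta)-c_{L}(u_{\diamond}(u^{-1}\cdot\eta))\big)\zeta(\eta).
\]
The crux is identifying $u_{\diamond}(u^{-1}\cdot\eta)$. Because $d(u)$ is orthogonal (Proposition \ref{triangularstructure}), the Clifford action $u_{\diamond}$ on generators of $V^{*}H^{*}$ agrees with $\theta(u) = d(u^{-1})^{t}$ under the metric identification $H\cong H^{*}$; combining this with the affine action of Proposition \ref{HGspace} and the $ax+b$ laws of Lemma \ref{identities} gives
\[
u_{\diamond}(u^{-1}\cdot\eta) = \theta(u)\big(\theta(u^{-1})\eta+\delta(u)\big) = \eta+\theta(u)\delta(u),
\]
so the difference collapses to $-c_{L}(\theta(u)\delta(u))$. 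This is Clifford multiplication by a single covector depending only on the base point in $Q_{r(u)}$ and not on the fibre coordinate $\eta$, hence a bounded operator of norm $\|\theta(u)\delta(u)\| = \|\delta(u)\|$, using that $\theta(u)$ is an isometry. Since $u\mapsto\|\delta(u)\|$ is continuous and so bounded on compact subsets of each Hausdorff chart $U_{i}$, Property 4 follows verbatim as for $B_{1}$; Property 5 is then automatic, as the difference operator is bounded and bounded perturbations preserve the relevant domains after multiplication by $f\in C_{c}(U_{i})$.

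The step I flag as the main obstacle is this last identification, but it turns out to be a simplification rather than a genuine difficulty: the telescoping that for $B_{1}$ required the nonpositive-curvature cosine inequality \cite[Corollary 13.2]{helgason} here collapses cleanly to Clifford multiplication by the \emph{constant} covector $\theta(u)\delta(u)$, precisely because $\GG$ acts on $H^{*}$ through the orientation-preserving affine ($ax+b$) transformations encoding $\theta$ and $\delta$. With the four properties verified, the triple is an unbounded $\GG$-equivariant Kasparov module and defines the class $[B_{2}]\in KK^{\GG}(C_{0}(Q),\CCl(V^{*}H^{*}))$.
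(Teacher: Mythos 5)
Your proposal is correct and follows essentially the same route as the paper: reduce everything except equivariance to the argument of Proposition \ref{B1} (localization for self-adjointness and regularity, the Clifford relation $c_{L}(\eta)^{2}=\|\eta\|^{2}$ for compactness, vanishing commutators with $m^{2}$), and then compute $B_{2}-W^{2}_{u}B_{2}W^{2}_{u^{-1}}$ directly from the affine action and the identity \eqref{cL}. Your answer $-c_{L}(\theta(u)\delta(u))$ agrees with the paper's $c_{L}(\delta(u^{-1}))$ via the relation $\theta(u)\delta(u)=-\delta(u^{-1})$ of Lemma \ref{identities}, and your observation that the curvature estimate of \cite[Corollary 13.2]{helgason} needed for $B_{1}$ is not needed here, the difference collapsing exactly to Clifford multiplication by a fibrewise-constant covector, is precisely how the paper's proof proceeds.
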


\begin{proof}
The proof is essentially the same as the proof of Proposition \ref{B1}.  
The only part that must be changed is checking the equivariance condition.  
For any $u\in \GG$, $[\phi]\in Q_{r(u)}$ and $\eta\in H^{*}_{[\phi]}$, we have
\begin{align*}
    (W^{2}_{u}B_{2}W^{2}_{u^{-1}})\zeta(\eta) 
    =&  u_{*}(B_{2}W^{2}_{u^{-1}}\zeta)\big(\theta(u^{-1})\eta+\delta(u)\big) \\ 
    =& u_{*}\Big(c_{L}\big(\theta(u^{-1})\eta+\delta(u)\big)(W^{2}_{u^{-1}}\zeta)\big(\theta(u^{-1})\eta+\delta(u)\big)\Big) \\ 
    =&u_{*}\Big( c_{L}\big(\theta(u^{-1})\eta+\delta(u)\big)\Big(u^{-1}_{*}
    \zeta\big(\theta(u)\big(\theta(u^{-1})\eta+\delta(u)\big)+\delta(u^{-1})\big)\Big)\Big)\\
    =&u_{*} \Big(c_{L}\big(\theta(u^{-1})\eta+\delta(u)\big)\big(u^{-1}_{*}
    \zeta(\eta)\big)\Big)\\
    =& c_{L}\big(\eta-\delta(u^{-1})\big)\zeta(\eta)
\end{align*}
where the last line follows from the identity $\theta(u)\delta(u) = -\delta(u^{-1})$
arising from Lemma \ref{identities}, 
together with the identity \eqref{cL}.  We then have
\[
    B_{2}-W^{2}_{u}B_{2}W^{2}_{u^{-1}} = c_{L}(\delta(u^{-1})),
\]
which defines a bounded operator on $(X_{E^{2}})_{r(u)}$.  
The rest of the proof is then the same as in Proposition \ref{B1}.
\end{proof}

\section{The index theorem}

\subsection{Some simplifications in codimension 1}\label{subsec:simp}

There are important simplifications in the codimension 1 case.  
Observe that for a codimension 1, transversely orientable 
foliation $\FF$ of $M$, the conormal bundle $N^{*}\rightarrow M$ 
is trivialised by a choice of orientation, which is given by a choice
of a transverse volume 
form $\omega$. 
  Such a choice 
determines a dual section $\omega^{*}$ of $N\rightarrow M$ and 
hence a map $t:N\rightarrow\RB$ defined by the 
equality $n = t(n)\omega^{*}$ for $n\in N$.  Thus
\[
    N = M\times \RB.
\]
The action of $u\in \GG$ on $N$ will then be denoted by
\begin{equation}
    u_{*}(s(u),n):= (r(u),\Delta(u)n),
\label{eq:action-codim1}
\end{equation}
with $\Delta:\GG\rightarrow\RB^{*}_{+}$ a multiplicative 
homomorphism.  Observe that under the 
correspondence $\omega\mapsto \omega^{*}$, 
this $\Delta(u)$ is precisely the Radon-Nikodym 
derivative of the transverse volume form $\omega$ 
with respect to the holonomy translation $u$.  The 
principal $\RB^{*}_{+}$-bundle $F^{+}(N)$ of positively 
oriented frames for $N$, which coincides with the 
Connes fibration $Q$ since $SO(1,\RB) = 1$, 
is then also trivial under the map $\phi\mapsto(\pi_{Q}(\phi),t\circ\phi)$:
\[
    Q = M\times\RB^{*}_{+}.
\]
The action of $u$ on the fibres of 
$Q$, defined by \eqref{Delta} since $q =1$, is induced by the same homomorphism $\Delta(u)$:
\[
    u\cdot(s(u),b):=(r(u),\Delta(u)b).
\]
We will assume for ease of calculation that
\[
    Q = M\times\RB
\]
using the logarithm map on the fibres, so that the action of a groupoid element $u\in \GG$ on $Q$ is now given by
\[
    u\cdot(s(u),c) = (r(u),c+\log\Delta(u)).
\]

The horizontal and vertical bundles are both trivial line bundles, so
\[
    N_{Q} = VQ\oplus H = Q\times(\RB\oplus\RB).
\]
Here we regard the horizontal bundle $H = Q\times\RB$ as a Euclidean bundle with metric $m$ arising from $Q$ defined as in Proposition \ref{bundleofmetrics} by
\[
    m_{(x,c)}^{H}(h_{1},h_{2}):=(e^{-c}h_{1})\cdot(e^{-c}h_{2}) = e^{-2c}h_{1}h_{2}.
\]
We use the metric $m^{H}$ to identify $H$ 
with its dual $H^{*}$, by mapping $h\in H$ 
to the functional $m^{H}(h,\cdot)$.  More 
precisely, we identify $h\in H_{(x,c)} =\RB$ 
with $\eta_{h}:=e^{-2c}h\in H^*_{(x,c)}$.  We then 
find that the resulting metric on $H^{*}$ is
\[
	m^{H^{*}}_{(x,c)}(\eta_{h},\eta_{h'}):=m^{H}_{(x,c)}(h,h') 
	= e^{-2c}hh' = e^{2c}\eta_{h}\eta_{h'}.
\]
Under this identification, the map 
$\theta(u):H^{*}_{(s(u),c)}\rightarrow H^{*}_{(r(u),c+\log\Delta(u))}$ 
is precisely $\eta\mapsto \Delta(u^{-1})\eta$.

With no need to trace over the vertical fibres in the 
codimension 1 case, we can then write the triangular structure of a holonomy 
transformation $u\in \GG$ as
\[
    u_* = \left(\begin{array}{cc} 1 & \delta(u) \\ 0 & \Delta(u)\end{array}\right).
\]
This action of $u_*$ on $VQ\oplus H\subset TQ$ is the 
differential of the action of $u$ on $Q$.  It follows then 
that $\delta(u)$ is the derivative with respect to the 
transverse coordinate in $M$ of the map $c\mapsto c+\log\Delta(u)$ 
on the fibres of $Q$.  Since the normal bundle $N$ over $M$ has been trivialised, we can write this derivative as the scalar $\delta(u) = \partial\log\Delta(u)$, with $\partial$ denoting the derivative with respect to the transverse coordinate.  Thus
\[
    u_{*} = \left(\begin{array}{cc} 1 & \partial\log\Delta(u) \\ 0 & \Delta(u)\end{array}\right).
\]

Let us now consider the Kasparov module $[B_{2}]$.  
The right-hand algebra in this case is $\CCl(V^{*}H^{*})$, 
and since for each $(x,c,\eta)\in H^{*}$ we can identify vertical tangent vectors in $V_{(x,c,\eta)}H^{*}$ with vectors in $H^{*}_{(x,c)}$, it follows that we can identify vertical covectors in $V^{*}_{(x,c,\eta)}H^{*}$ with linear functionals $H^{*}_{(x,c)}\rightarrow\RB$.  Observe then 
that there is a nonvanishing section $\kappa$ of $V^{*}H^{*}\rightarrow H^{*}$ defined by
\[
	\kappa(x,c,\eta)
	:=e^{c}\eta,\quad\mbox{for}\quad (x,c,\eta)\in H^{*}.
\]
One has
\[
	\kappa(r(u),c+\log\Delta(u),\Delta(u^{-1})\eta) = e^{c+\log\Delta(u)}\Delta(u^{-1})\eta = e^{c}\eta = \kappa(s(u),c,\eta),
\]
so $\kappa$ is invariant under the action of $\GG$ and therefore defines a trivialisation $V^{*}H^{*}\cong H^{*}\times\RB$ for which the action of $\GG$ is given by
\[
	u_*(s(u),c,\eta,s) = (r(u),c+\log\Delta(u),\Delta(u^{-1})\eta,s)
	\quad\mbox{for}\quad c\in Q,\ \ s\in\RB,\ \ \eta\in H^{*}_{(s(u),c)}.
\]
It follows that we can take $\CCl(V^{*}H^{*})$ to be $C_{0}(H^{*})\otimes\Cliff(\RB)$, where $\GG$ acts trivially on $\Cliff(\RB)$. That is, for all 
$f\otimes e\in C_{0}(H^{*})\otimes\Cliff(\RB)$ we have
\[
	\alpha^{2}_{u}(f\otimes e)(r(u),c,\eta) = f(s(u),c-\log\Delta(u),\Delta(u)\eta+\partial\log\Delta(u))\otimes e,\quad \eta\in H^{*}_{(r(u),c)}.
\]
We define therefore an action $\alpha$ of $\GG$ on $C_{0}(H^{*})$ by
\[
\alpha_{u}(f)(r(u),c,\eta):=f(s(u),c-\log\Delta(u),\Delta(u)\eta+\partial\log\Delta(u))
\]
for $f\in C_{0}(H^{*})$, so that $\alpha^{2}_{u}(f\otimes e) = \alpha_{u}(f)\otimes e$ for all $u\in \GG$ and $e\in\Cliff(\RB)$.

The same remarks carry over to the exterior bundle $\Lambda^{*}V^{*}H^{*}$, so that $\Gamma_{0}(H^{*};\Lambda^{*}(V^{*}H^{*})\otimes\CB)$ is just $C_{0}(H^{*})\otimes\Cliff(\RB)$, on which the representation $W^{2}$ of $\GG$ is defined by the same formula as $\alpha^{2}$:
\[
	W^{2}_{u}(\rho\otimes e)(r(u),c,\eta) = \rho(s(u),c-\log\Delta(u),\Delta(u)\eta+\partial\log\Delta(u))\otimes e
\]
for all $\rho\otimes e\in C_{0}(H^{*})\otimes\Cliff(\RB)$.  We thus define an action $W$ of $\GG$ on the Hilbert $C_{0}(H^{*})$-module $C_{0}(H^{*})$ by
\[
W_{u}(\rho)(r(u),c,\eta):=\rho(s(u),c-\log\Delta(u),\Delta(u)\eta+\partial\log\Delta(u))
\]
for all $\rho\in C_{0}(H^{*})$, and we see that $W^{2}_{u}(\rho\otimes e) = W_{u}(\rho)\otimes e$ for all $u\in \GG$ and $e\in\Cliff(\RB)$.

Finally, the operator $B_{2}$ acts on $C_{0}(H^{*})\otimes\Cliff(\RB)$ by
\[
    (B_{2}\rho\otimes e)(x,c,\eta):=e^{c}\eta\rho(x,c,\eta)\otimes c_L(e_{1})e,\quad e\in \Cliff(\RB),\ \eta\in H^*_{(x,c)},
\]
where $c_L$ is the left Clifford multiplication and $e_{1}$ is a fixed element of $\Cliff(\RB)$ with square 1.  We can now proceed with the construction of a spectral triple from this data and the proof of the index theorem relating the spectral triple to the Godbillon-Vey invariant.

\subsection{The spectral triple}
\label{subsec:spec-trip}

Applying the descent map to the equivariant 
Kasparov module $(C_{0}(Q),{}_{m^{2}}X_{E^{2}},B_{2})$ 
of Proposition \ref{B2} in codimension 1 gives us by Proposition \ref{descent} a 
Kasparov module
\begin{equation}
\label{kasmod}
	(\Gamma_{c}(Q\rtimes \GG,\Omega^{\frac{1}{2}}), X_{E^{2}}\rtimes_{r} \GG, r^{*}B_{2})
\end{equation}
which defines a class in $KK(C_{0}(Q)\rtimes_{r} \GG, \CCl(V^{*}H^{*})\rtimes_{r} \GG)$.  
By the remarks of the previous section, we actually have
\[
	\CCl(V^{*}H^{*})\rtimes_{r} \GG 
	= (C_{0}(H^{*})\otimes\Cliff(\RB))\rtimes_{r} \GG 
	= (C_{0}(H^{*})\rtimes_{r} \GG)\otimes\Cliff(\RB)
\]
since $\GG$ acts trivially on $\Cliff(\RB)$.  Thus the 
module \eqref{kasmod} can be 
replaced \cite[Proposition 13, Appendix A, Chapter 4]{ncg} by 
the odd Kasparov $C_{0}(Q)\rtimes_{r} \GG$-$C_{0}(H^{*})\rtimes_{r} \GG$-module
\begin{equation}
\label{oddkasmod}
	(\Gamma_{c}(Q\rtimes \GG,\Omega^{\frac{1}{2}}), C_{0}(H^{*})\rtimes_{r} \GG, \BB)
\end{equation}
where we define $\BB$ on $\Gamma_{c}(H^{*}\rtimes \GG;\Omega^{\frac{1}{2}})\subset C_{0}(H^{*})\rtimes_{r} \GG$ by
\[
    (\BB\rho)_{u}(x,c,\eta):=(\BB_{r(u)}\rho_{u})(x,c,\eta):=e^{c}\eta\rho_{u}(x,c,\eta),\quad \eta\in\RB.
\]
Here we are using density of $\Gamma_{c}(H^{*}\rtimes \GG;\Omega^{\frac{1}{2}})$ in $C_{0}(H^{*})\rtimes_{r} \GG$ and density of $\Gamma_{c}(Q\rtimes \GG;\Omega^{\frac{1}{2}})$ in $C_{0}(Q)\rtimes_{r} \GG$ as in the final paragraph of Section 2.3.

The $\GG$-invariant transverse volume forms of interest on $Q$ and $H^{*}$ respectively are
\begin{equation}\label{volforms}
d\nu_{Q} = e^{-c}\omega\wedge dc,\hspace{7mm}d\nu_{H^{*}} = \omega\wedge dc\wedge d\eta,
\end{equation} 
and we let $\tau_{Q}$ and $\tau_{H^{*}}$ be the corresponding traces on $\Gamma_{c}(Q\rtimes\GG;\Omega^{\frac{1}{2}})$ and $\Gamma_{c}(H^{*}\rtimes\GG;\Omega^{\frac{1}{2}})$ defined by integration against $d\nu_{Q}$ and $d\nu_{H^{*}}$.

Putting the trace $\tau_{H^{*}}$ together with the odd Kasparov 
module \eqref{oddkasmod}, by Proposition \ref{sem1} we 
obtain an odd semifinite spectral triple
\[
	(\AA,\HH,\BB)
\]
relative to $(\NN,\tau)$ where:
\begin{enumerate}
	\item $\AA = \Gamma_{c}(Q\rtimes \GG;\Omega^{\frac{1}{2}})$ acts by convolution operators on
	\item $\HH$, the Hilbert space completion of $\Gamma_{c}(H^{*}\rtimes \GG;\Omega^{\frac{1}{2}})$ in the inner product
	\[
		(\rho_{1}|\rho_{2}) = \tau_{H^{*}}(\rho_{1}^**\rho_{2}),
	\]
	\item $\BB$ is regarded as an operator on $\HH$ with 
	domain $\Gamma_{c}(H^{*}\rtimes \GG;\Omega^{\frac{1}{2}})$,
	\item $\NN$ is the weak closure of 
	$\Gamma_{c}(H^{*}\rtimes \GG;\Omega^{\frac{1}{2}})$ 
	in the bounded operators on $\HH$ and,
	\item $\tau$ is the normal extension of $\tau_{H^*}$ to $\NN$.
\end{enumerate}

We now apply the semifinite local index formula 
to $(\AA,\HH,\BB)$ to prove the codimension 1 Godbillon-Vey index theorem.

\subsection{The index theorem}

We will apply the residue cocycle of \cite[Definition 3.2]{cgrs2} to prove the following theorem.

\begin{thm}
\label{mainthm}
Let $(M,\FF)$ be a foliated manifold of codimension 1.  
The Chern character of the semifinite spectral triple 
$(\AA,\HH,\BB)$ given in Section \ref{subsec:spec-trip} 
is, up to a factor of $(2\pi i)^{\frac{1}{2}}$, the global, non-\'{e}tale analogue of Godbillon-Vey cyclic cocycle of Connes 
and Moscovici \cite[Proposition 19]{backindgeom}.
\end{thm}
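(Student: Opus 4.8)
The plan is to apply the odd semifinite residue cocycle of \cite[Definition 3.2]{cgrs2} to $(\AA,\HH,\BB)$ and to show that, after taking residues, the resulting cyclic cocycle is the global analogue of \eqref{cmgv}. The first task is analytic: I would verify that $(\AA,\HH,\BB)$ is smoothly summable with \emph{isolated} spectral dimension equal to $1$. Since $\BB$ acts on $\Gamma_{c}(H^{*}\rtimes\GG;\Omega^{\frac{1}{2}})$ as multiplication by $e^{c}\eta$ and $\tau=\tau_{H^{*}}$ is integration against $d\nu_{H^{*}}=\omega\wedge dc\wedge d\eta$, for $a\in\AA$ the function $s\mapsto\tau(a(1+\BB^{2})^{-s})$ is governed by the one-dimensional integral $\int_{\RB}(1+e^{2c}\eta^{2})^{-s}\,d\eta = e^{-c}\sqrt{\pi}\,\Gamma(s-\tfrac12)/\Gamma(s)$, which is holomorphic for $\Re(s)>\tfrac12$ with a simple pole at $s=\tfrac12$; compact support of the kernels of elements of $\AA$ in $M$ and in $c$ controls the remaining integrations. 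This identifies the spectral dimension as $1$ and shows the pole is isolated, so the residue cocycle collapses to its single one-dimensional component
\[
\phi_{1}(a_{0},a_{1}) = \sqrt{2\pi i}\,\res_{s=1/2}\tau\big(a_{0}[\BB,a_{1}](1+\BB^{2})^{-s}\big),
\]
the prefactor $\sqrt{2\pi i}$ being exactly the normalisation appearing in the statement.

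The second task is algebraic: I would compute the commutator $[\BB,a_{1}]$ explicitly for $a_{1}\in\AA=\Gamma_{c}(Q\rtimes\GG;\Omega^{\frac{1}{2}})$ acting by convolution through $m^{2}\rtimes_{r}\GG$. Using the formulas of Section \ref{subsec:simp} for the $\GG$-action, namely $W_{u}\rho(\,\cdot\,,c,\eta)=\rho(\,\cdot\,,c-\log\Delta(u),\Delta(u)\eta+\partial\log\Delta(u))$ together with $(\BB\rho)_{u}=e^{c}\eta\,\rho_{u}$, a direct calculation shows that the terms linear in $\eta$ cancel, so that $[\BB,a_{1}]$ is again a compactly supported convolution operator whose kernel is $\eta$-independent and proportional to $\delta(u)=\partial\log\Delta(u)$ (carrying a weight $e^{c}$). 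This exhibits $[\BB,\,\cdot\,]$ as the global, non-\'etale realisation of the Connes--Moscovici derivation $\delta_{1}$; the transformation law $\delta(uv)=\delta(v)+\theta(v^{-1})\delta(u)$ of Lemma \ref{identities} is precisely what makes $[\BB,\,\cdot\,]$ a derivation of $\AA$.

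The third task is to evaluate the trace and residue and match. Inserting the commutator kernel into $\tau(a_{0}[\BB,a_{1}](1+\BB^{2})^{-s})$, the trace localises to units $w=1_{x}$, the convolution produces the composability integral over $v\in\GG$ with $r(v)=x$ pairing $a_{0}$ with $[\BB,a_{1}]$ at $v$ and $v^{-1}$ (the groupoid analogue of the constraint $g_{0}g_{1}=1_{\Gamma}$ in \eqref{cmgv}), and since the commutator kernel is independent of $\eta$ the only $\eta$-dependence is in $(1+e^{2c}\eta^{2})^{-s}$. Carrying out the $\eta$-integral and applying $\res_{s=1/2}$ (using $\res_{s=1/2}\Gamma(s-\tfrac12)/\Gamma(s)=1/\sqrt{\pi}$) produces a factor $e^{-c}$ which, combined with the $e^{c}$ weight from the commutator, the half-density Jacobians implicit in the convolution, and the covariance of $\omega$ under holonomy, is to assemble the remaining integration into one against the invariant volume form $d\nu_{Q}=e^{-c}\omega\wedge dc$ of \eqref{volforms}. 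The outcome is $\phi_{1}(a_{0},a_{1})=\sqrt{2\pi i}\,\tau_{Q}(a_{0}\,\delta_{1}a_{1})$, the global analogue of \eqref{cmgv} and of \cite[Proposition 19]{backindgeom}.

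The main obstacle I anticipate is the careful bookkeeping in the third step: tracking the half-density factors $\Omega^{\frac{1}{2}}$, the competing weights $e^{\pm c}$ coming from $\BB$ and from the residue of the $\eta$-integral, and the Radon--Nikodym factors $\Delta(u)$, and showing that they conspire to reproduce \emph{exactly} the invariant transverse volume form $d\nu_{Q}$ and the shift by $g_{0}$ in \eqref{cmgv} rather than some rescaling of these. A secondary but nontrivial point is justifying that the analytic hypotheses of the semifinite local index formula hold in this non-unital, groupoid-convolution setting, so that \cite[Definition 3.2]{cgrs2} applies and the higher residue components genuinely vanish.
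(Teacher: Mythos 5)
Your proposal is correct and follows essentially the same route as the paper: apply the odd residue cocycle of \cite[Definition 3.2]{cgrs2} to the spectral dimension~$1$ triple, compute $[\BB,a]$ as convolution by the $\eta$-independent kernel $e^{c}\partial\log\Delta(u^{-1})a_{u}(x,c)$ (the cancellation of the $\eta$-linear terms via $\Delta(v^{-1})\partial\log\Delta(v)=-\partial\log\Delta(v^{-1})$, identifying the global $\delta_{1}$), localise the trace to units so the $\eta$-integral factors out, and take the residue to obtain $(2\pi i)^{\frac{1}{2}}\tau_{Q}(a_{0}\delta_{1}(a_{1}))$. The one place the paper is more substantial is precisely the ``secondary point'' you flagged: smooth summability and isolated spectral dimension are proved there by computing the iterated commutators $T^{(k)}=[\BB^{2},T^{(k-1)}]$ for $T=a$ and $T=[\BB,a]$ (kernels growing like $\eta^{k}$, controlled by $(1+\BB^{2})^{-k/2}$), first for the span of products $\AA^{2}$ and then for all of $\AA$ via the approximate unit of \cite[Proposition 6.8]{muhwil} for the inductive limit topology, so your plan is completable exactly along the lines you anticipate.
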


To apply the local index formula of \cite{cgrs2} we need to 
check the summability and smoothness of the spectral triple.

\begin{lemma}
The spectral triple $(\AA,\HH,\BB)$ is smoothly 
summable of spectral dimension $p = 1$ and has isolated spectral dimension.
\end{lemma}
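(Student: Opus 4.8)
The plan is to verify the conditions of smooth summability and isolated spectral dimension from \cite{cgrs2} directly, exploiting the fact that after the codimension 1 reductions $\BB$ is the diagonal operator of fibrewise multiplication by $e^{c}\eta$, while the elements of $\AA = \Gamma_{c}(Q\rtimes\GG;\Omega^{\frac{1}{2}})$ are pulled back from $Q\rtimes\GG$ and so are independent of the fibre coordinate $\eta$. First I would compute the basic zeta function. Since $(1+\BB^{2})^{-s/2}$ is diagonal, multiplying by $(1+e^{2c}\eta^{2})^{-s/2}$, and since $\tau$ evaluates a convolution operator at the groupoid units and integrates against $d\nu_{H^{*}} = \omega\wedge dc\wedge d\eta$, for $a\in\AA$ one gets
\[
\tau\big(a(1+\BB^{2})^{-s/2}\big) = \int_{M\times\RB\times\RB} a(x,c)\,(1+e^{2c}\eta^{2})^{-s/2}\,\omega\wedge dc\wedge d\eta.
\]
The change of variable $u = e^{c}\eta$ in the inner integral factors off the scalar $\sqrt{\pi}\,\Gamma((s-1)/2)/\Gamma(s/2)$ and leaves $\int_{Q} a(x,c)\,e^{-c}\,\omega\wedge dc = \tau_{Q}(a)$, since $e^{-c}\omega\wedge dc = d\nu_{Q}$. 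Hence
\[
\tau\big(a(1+\BB^{2})^{-s/2}\big) = \frac{\sqrt{\pi}\,\Gamma((s-1)/2)}{\Gamma(s/2)}\,\tau_{Q}(a),
\]
which converges for $\Re(s)>1$ and has a simple pole at $s=1$, identifying the spectral dimension as $p=1$.

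Next I would establish the $QC^{\infty}$ condition, namely that $a$ and $[\BB,a]$ lie in $\bigcap_{n}\Dom(\ad^{n}_{(1+\BB^{2})^{1/2}})$ for each $a\in\AA$; using $(1+\BB^{2})^{1/2}$ in place of $|\BB|$ circumvents the non-smoothness of $|\BB|$ along $\eta=0$. The key computation, already implicit in the proof of Proposition \ref{B2} via the identity $\BB - W_{u}\BB W_{u^{-1}} = c_{L}(\delta(u^{-1}))$, is that $[\BB,a]$ and $[\BB^{2},a]$ are again convolution operators whose densities are those of $a$ multiplied by bounded fibrewise factors built from the smooth homomorphism $\Delta$ and the smooth cocycle $\delta(u) = \partial\log\Delta(u)$. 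Since the elements of $\AA$ are compactly supported and $\Delta,\delta$ are smooth, these commutators remain inside the compactly supported convolution algebra; feeding them into the standard smoothness criterion of \cite{cgrs2} then places $a$ and $[\BB,a]$ in $\bigcap_{n}\Dom(\ad^{n}_{(1+\BB^{2})^{1/2}})$.

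For smooth summability I would then check that every operator $T(1+\BB^{2})^{-s/2}$, with $T$ a product of elements of $\AA$, commutators $[\BB,\AA]$ and their $\ad_{(1+\BB^{2})^{1/2}}$-derivatives, is $\tau$-trace class for $\Re(s)>1$. By the previous paragraph each such $T$ is again convolution by a compactly supported density that is independent of $\eta$, so the trace-norm estimate reduces to exactly the $\eta$-integral of the first step, convergent precisely for $\Re(s)>1$; the requisite seminorm bounds of \cite{cgrs2} follow in the same manner.

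Finally, for isolated spectral dimension I would observe that every zeta function $s\mapsto\tau\big(B(1+\BB^{2})^{-s/2}\big)$ entering the residue cocycle of \cite{cgrs2} factors, by the computation above, as a ratio of Gamma functions times a constant of the form $\tau_{Q}(\cdot)$; such a ratio is meromorphic on $\CB$ with only isolated, indeed simple, poles, so the relevant residues exist and the spectral dimension is isolated. I expect the main obstacle to be the bookkeeping in the $QC^{\infty}$ step: one must verify that the iterated commutators with $(1+\BB^{2})^{1/2}$ genuinely close up inside the smooth compactly supported convolution algebra and introduce no unbounded growth in $\eta$, as it is precisely this control that simultaneously secures the smoothness and the uniform summability of all the derived operators.
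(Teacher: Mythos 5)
Your outline follows the paper's strategy (explicit convolution computations, the substitution $t=e^{c}\eta$, a Gamma-function zeta factor), and your zeta computation and identification of $p=1$ are correct. But there is a genuine gap at the crux of the smoothness step: your claim that $[\BB,a]$ and $[\BB^{2},a]$ are convolution by the density of $a$ times \emph{bounded} fibrewise factors, and later that every derived operator $T$ is ``convolution by a compactly supported density that is independent of $\eta$'', is false. Computing directly, $[\BB^{2},a]$ is convolution by
\[
((x,c,\eta),u)\mapsto a_{u}(x,c)\,e^{2c}\big(2\eta\,\partial\log\Delta(u^{-1})-(\partial\log\Delta(u^{-1}))^{2}\big),
\]
which grows \emph{linearly} in $\eta$, and the $k$-fold iterated commutator $[\BB,a]^{(k)}=[\BB^{2},[\BB,a]^{(k-1)}]$ has density growing like $\eta^{k}$ in the fibre variable. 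So the iterated commutators do \emph{not} close up inside the compactly supported convolution algebra, and your final paragraph's expectation of ``no unbounded growth in $\eta$'' is exactly what fails. The argument is rescued not by boundedness but by the structure of the seminorms in \cite{cgrs2}: one shows that $a^{(k)}(1+\BB^{2})^{-k/2}$ and $[\BB,a]^{(k)}(1+\BB^{2})^{-k/2}$ are bounded (the weight $(1+\BB^{2})^{-k/2}$ precisely absorbs the $\eta^{k}$ growth), whence $(1+\BB^{2})^{-k/2-s/4}(a^{(k)})^{*}a^{(k)}(1+\BB^{2})^{-k/2-s/4}$ is trace class for $\Re(s)>1$ and $\AA\cup[\BB,\AA]\subset B_{2}^{\infty}(\BB,1)$.

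A second, smaller gap: the direct estimates give Hilbert--Schmidt-type control ($B_{2}^{\infty}$), so that products satisfy $\AA^{2}\cup[\BB,\AA^{2}]\subset B_{1}^{\infty}(\BB,1)$; this proves smooth summability only over $\AA^{2}$, not over $\AA$. You assert $\tau$-trace-class membership for products but never address how single elements of $\AA$ are handled. The paper closes this by invoking the existence of a left approximate unit for $\AA=\Gamma_{c}(Q\rtimes\GG;\Omega^{\frac{1}{2}})$ in the inductive limit topology \cite[Proposition 6.8]{muhwil}, which lets one approximate any element of $\AA$ by products while preserving summability. Once these two points are repaired, your concluding argument for isolated spectral dimension (all the relevant zeta functions are $\tau_{Q}$-constants times ratios of Gamma functions, hence meromorphic near $s=0$) goes through as in the paper.
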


\begin{proof}
We first check finite summability.  For $s\in\RB$,
$a\in \Gamma_{c}(Q\rtimes \GG;\Omega^{\frac{1}{2}})$ and $\rho\in \HH$, we calculate
\begin{align*}
	(a(1+&\BB^{2})^{-\frac{s}{2}}\rho)_{u}(x,c,\eta) 
	= \int_{v\in \GG^{r(u)}} a_{v}(x,c)\big(W_{v}(1+\BB_{s(v)}^{2})^{-\frac{s}{2}}\rho_{v^{-1}u})(x,c,\eta)\\ 
	=& \int_{v\in \GG^{r(u)}} a_{v}(x,c)\big(1+e^{2(c-\log\Delta(v))}(\Delta(v)\eta+\partial\log\Delta(v))^2\big)^{-\frac{s}{2}}\big(W_{v}\rho_{v^{-1}u}\big)(x,c,\eta)\\ =& \int_{v\in \GG^{r(u)}} a_{v}(x,c)(1+e^{2c}\Delta(v^{-1})^{2}(\Delta(v)\eta+\partial\log\Delta(v))^{2})^{-\frac{s}{2}}(W_{v}\rho_{v^{-1}u})(x,c,\eta)\\ =& \int_{v\in \GG^{r(u)}} a_{v}(x,c)(1+e^{2c}(\eta-\partial\log\Delta(v^{-1}))^{2})^{-\frac{s}{2}}(W_{v}\rho_{v^{-1}u})(x,c,\eta),
\end{align*}
where on the last line we have used Lemma \ref{identities} in simplifying
$\Delta(v^{-1})\partial\log\Delta(v) = -\partial\log\Delta(v^{-1})$.  So $a(1+\BB^{2})^{-\frac{s}{2}}$ is the half-density on $H^{*}\rtimes \GG$ defined by
\[
	((x,c,\eta),u)\mapsto 
	a_{u}(x,c)\big(1+e^{2c}(\eta-\partial\log\Delta(u^{-1}))^{2}\big)^{-\frac{s}{2}},
\]
compactly supported in the $u$ and $(x,c)$ variables.  Thus
\begin{align*}
	\tau_{H^{*}}(a(1+\BB^{2})^{-\frac{s}{2}}) =& \int_{M\times\RB\times\RB}a(x,c)\big(1+e^{2c}\eta^{2}\big)^{-\frac{s}{2}}\omega\wedge dc\wedge d\eta\\ =& \int_{Q}a(x,c)d\nu_{Q}\int_{\RB}\big(1+t^{2}\big)^{-\frac{s}{2}}dt,
\end{align*}
where we have made the substitution $t = e^{c}\eta$.  It is then clear that $\tau_{H^{*}}(a(1+\BB^{2})^{-\frac{s}{2}})$ is finite for all $s>1$.
For smoothness, we fix 
$a\in \Gamma_{c}(Q\rtimes \GG;\Omega^{\frac{1}{2}})$ and calculate
\begin{align*}
([\BB^{2},a]&\rho)_{u}(x,c,\eta) 
= e^{2c}\eta^{2}\int_{v\in \GG^{r(u)}} a_{v}(x,c)(W_{v}\rho_{v^{-1}u})(x,c,\eta)\\
&\qquad\qquad\qquad\qquad\qquad- \int_{v\in \GG^{r(u)}} a_{v}(x,c)(W_{v}\BB^{2}_{s(v)}\rho_{v^{-1}u})(x,c,\eta)\\ 
=& \int_{v\in \GG^{r(u)}} a_{v}(x,c)e^{2c}(\eta^{2}-\Delta(v^{-1})^{2}(\Delta(v)\eta+\partial\log\Delta(v))^{2})(W_{v}\rho_{v^{-1}u})(x,c,\eta)\\ =& \int_{v\in \GG^{r(u)}}a_{v}(x,c)e^{2c}\big(2\eta\partial\log\Delta(v^{-1})-(\partial\log\Delta(v^{-1}))^{2}\big)(W_{v}\rho_{v^{-1}u})(x,c,\eta)
\end{align*}
so that $[\BB^{2},a]$ is convolution by the half-density on $H^{*}\rtimes \GG$ defined by
\[
((x,c,\eta),u)\mapsto a_{u}(x,c)e^{2c}\big(2\eta\partial\log\Delta(u^{-1})-(\partial\log\Delta(u^{-1}))^{2}\big).
\]
We also calculate
\begin{align*}
([\BB^{2},[\BB,a]]\rho)_{u}(x,c,\eta) 
=& e^{2c}\eta^{2}\big([\BB,a]\rho\big)_{u}(x,c,\eta)-\big([\BB,a]\BB^{2}\rho\big)_{u}(x,c,\eta)\\ 
=& e^{2c}\eta^{2}\int_{v\in \GG^{r(u)}} a_{v}(x,c)e^{c}\partial\log\Delta(v^{-1})\big(W_{v}\rho_{v^{-1}u}\big)(x,c,\eta)\\ 
&- \int_{v\in \GG^{r(u)}} a_{v}(x,c)e^{c}\partial\log\Delta(v^{-1})\big(W_{v}\BB^{2}_{s(v)}\rho_{v^{-1}u}\big)(x,c,\eta)\\ 
=& e^{2c}\eta^{2}\int_{v\in \GG^{r(u)}} a_{v}(x,c)e^{c}\partial\log\Delta(v^{-1})(W_{v}\rho_{v^{-1}u})(x,c,\eta)\\
&- \int_{v\in \GG^{r(u)}} a_{v}(x,c)e^{3c}\partial\log\Delta(v^{-1})\Delta(v^{-1})^{2}(\Delta(v)\eta+\partial\log\Delta(v))^{2}\\&\times(W_{v}\rho_{v^{-1}u})(x,c,\eta)\\ 
=& \int_{v\in \GG^{r(u)}} a_{v}(x,c)e^{3c}\big(2\eta\partial\log\Delta(v^{-1})-(\partial\log\Delta(v^{-1}))^{2}\big)\\ &\times\partial\log\Delta(v^{-1})(W_{v}\rho_{v^{-1}u})(x,c,\eta),
\end{align*}
so that $[\BB^{2},[\BB,a]]$ is the half-density on $H^{*}\rtimes \GG$ defined by
\[
	((x,c,\eta),u)\mapsto a_{u}(x,c)e^{3c}\big(2\eta\partial\log\Delta(u^{-1})-(\partial\log\Delta(u^{-1}))^{2}\big)\partial\log\Delta(u^{-1}).
\]
More generally, setting $T^{(0)}:=T$ and then inductively defining $T^{(k)}:=[\BB^{2},T^{(k-1)}]$, we see that $[\BB,a]^{(k)}$ is the half-density on $H^{*}\rtimes \GG$ defined by
\[
((x,c,\eta),u)\mapsto a_{u}(x,c)e^{(2k+1)c}\big(2\eta\partial\log\Delta(u^{-1})-(\partial\log\Delta(u^{-1}))^{2}\big)^{k}\partial\log\Delta(u^{-1}).
\]
Now these computations show that for $a\in \AA$ and $k\in\NB$, the operators $a^{(k)}$ and $[\BB,a]^{(k)}$ are half densities on $H^{*}\rtimes \GG$, with compact support in the $((x,c),u)\in Q\rtimes \GG$ variables equal to that of $a$, and growing like $\eta^{k}$ in the fibre variable $\eta\in H^{*}_{(x,c)}$ for all $(x,c)\in Q$. 
Hence both $a^{(k)}(1+\BB^2)^{-k/2}$ and $[\BB,a]^{(k)}(1+\BB^2)^{-k/2}$
are bounded with compact support in the $Q\rtimes \GG$ directions.
Hence for all $a\in \AA$ the operator
$$
(1+\BB^2)^{-k/2-s/4}(a^{(k)})^*a^{(k)}(1+\BB^2)^{-k/2-s/4}
$$
is trace class whenever the real part of $s$ is greater than 1, and similarly
with $a$ replaced by $[\BB,a]$. Thus $\AA\cup [\BB,\AA]\subset B_2^\infty(\BB,1)$
in the notation of \cite{cgrs2}. Thus $\AA^2$, the span of products from $\AA$,
satisfies $\AA^2\cup[\BB,\AA^2]\subset B_1^\infty(\BB,1)$, showing that
the semifinite spectral triple over $\AA^2$ is smoothly summable.

The last step to establish smooth summability is to observe that
$\AA$ has a (left) approximate unit for the inductive limit topology
by \cite[Proposition 6.8]{muhwil}. This ensures that any compactly supported
section in $\AA=\Gamma_{c}(Q\rtimes \GG;\Omega^{\frac{1}{2}})$
can be approximated by products while preserving summability.

Finally the computations also show that $(\AA,\HH,\BB)$
has isolated spectral dimension, as in \cite[Definition 3.1]{cgrs2},
since for all multi-indices $k$ of length $m\geq 0$ we have proved that
$$
\tau_{H^*}(a_0[\BB,a_1]^{(k_1)}\cdots[\BB,a_m]^{(k_m)}(1+\BB^2)^{-|k|-m/2-s})
$$
has a meromorphic continuation in a neighbourhood of $s=0$.
\end{proof}

Finally we can prove the Theorem \ref{mainthm}.

\begin{proof}[Proof of Theorem \ref{mainthm}]
Since the spectral dimension $p = 1$ and since the 
parity of the spectral triple is 1, the only nonzero 
term in the residue cocycle is $\phi_{1}$ as defined in \cite[Definition 3.2]{cgrs2}.  For any $a\in \Gamma_{c}(Q\rtimes \GG;\Omega^{\frac{1}{2}})$ we have
\begin{align*}
	\big([\BB,a]\rho\big)_{u}(x,c,\eta) 
	=& \BB_{r(u)}\int_{v\in \GG^{r(u)}} a_{v}(x,c)(W_{v}\rho_{v^{-1}u})(x,c,\eta)\\&-\int_{v\in \GG^{r(u)}} a_{v}(x,c)(W_{v}\BB_{s(v)}\rho_{v^{-1}u})(x,c,\eta)\\ 
	=& \int_{v\in \GG^{r(u)}} a_{v}(x,c)\big(\BB_{r(v)}- W_{v}\BB_{s(v)} W_{v^{-1}}\big)(W_{v}\rho_{v^{-1}u})(x,c,\eta)\\ 
	=& \int_{v\in \GG^{r(u)}} a_{v}(x,c)e^{c}\partial\log\Delta(v^{-1})(W_{v}\rho_{v^{-1}u})(x,c,\eta)\\ =& (\delta_{1}(a)\rho)_{u}(x,c,\eta),
\end{align*}
where $\delta_{1}$ is the derivation of $\Gamma_{c}(Q\rtimes \GG;\Omega^{\frac{1}{2}})$ defined by
\[
	\delta_{1}(a)_{u}(x,c):=e^{c}\partial\log\Delta(u^{-1})a_{u}(x,c).
\]
The derivation $\delta_{1}$ is the non-\'{e}tale analogue of that given in \cite[Page 39]{backindgeom}.  
Thus for $a_{0},a_{1}\in \Gamma_{c}(Q\rtimes \GG;\Omega^{\frac{1}{2}})$, we calculate
\begin{align*}
	\phi_{1}(a_{0},a_{1}) 
	=& 2(2\pi i)^{\frac{1}{2}}\res_{z=0}\tau_{H^{*}}\big(a_{0}[\BB,a_{1}](1+\BB^{2})^{-\frac{1}{2}-z}\big)\\ 
	=& 2(2\pi i)^{\frac{1}{2}}\tau_{Q}(a_{0}\delta_{1}(a_{1}))\res_{z=0}\int_{\RB}(1+t^2)^{-\frac{1}{2}-z}dh\\ 
	=&2(2\pi i)^{\frac{1}{2}}\tau_{Q}(a_{0}\delta_{1}(a_{1}))\res_{z=0}\frac{\Gamma(1/2)\Gamma(z)}{2\Gamma(1/2+z)}\\
	=&(2\pi i)^{\frac{1}{2}}\tau_{Q}(a_{0}\delta_{1}(a_{1})).
\end{align*}
This is, up to the factor $(2\pi i)^{\frac{1}{2}}$, the non-\'{e}tale analogue of the Godbillon-Vey cyclic cocycle from
\cite[Proposition 19]{backindgeom}.
\end{proof}

\section{Relation with Connes' approach}\label{relconnes}

We will outline in this section how our construction can, in codimension 1, be reconciled with Connes' approach to realise Gelfand-Fuks cocycles as cyclic cocycles for a convolution algebra.  In doing so we will be able to justify why our spectral triple represents the Godbillon-Vey invariant.  Let us first briefly recall Connes' approach \cite[Theorem 7.15]{cyctrans}.

\subsection{Connes' approach}

Connes considers a discrete group $\Gamma$ of orientation preserving diffeomorphisms of an oriented manifold $V$ of dimension $n$.  Associated to $V$ and any $k\in\NB\cup\{0\}$ is its \emph{positively oriented $k^{th}$ order jet bundle} $J^{+}_{k}(V)$, whose fibre over $x\in V$ consists of equivalence classes of local diffeomorphisms $\varphi:U\rightarrow V$, where $U$ is an open neighbourhood of $0$ in $\RB^{n}$, for which $\varphi(0) = x$.  Two such diffeomorphisms $\varphi$ and $\psi$ are said to have the same \emph{$k$-jet at $0$}, denoted $j^{k}_{0}(\varphi) = j^{k}_{0}(\psi)$, if in any local coordinate system about $x$ all partial derivatives of $\varphi$ and $\psi$ of order less than or equal to $k$ coincide at $0$.  All the bundles $J^{+}_{k}(V)$, for $k\geq1$, carry canonical right actions of $GL^{+}(n,\RB)$, and we write $\underline{J}^{+}_{k}(V):=J^{+}_{k}(V)/SO(n,\RB)$.  The $\underline{J}^{+}_{k}(V)$ have contractible fibres \cite[p. 132]{bott3}. 

It is then well-known \cite[Equation 3.2]{bott3} that associated to any ($SO(n,\RB)$-relative) Gelfand-Fuchs cocycle $\varpi$ is $k\in\NB$ and a canonical, diffeomorphism-invariant differential form, also denoted $\varpi$, on the quotient $\underline{J}^{+}_{k}(V)$. Connes uses the exterior derivative on $V$ to manufacture $\varpi$ into a cyclic cocycle $\varpi_{c}$ for the algebra $C_{c}^{\infty}(\underline{J}^{+}_{k}(V)\rtimes\Gamma)$. Letting $W$ denote the bundle of metrics over $V$, Connes invokes work of Kasparov \cite{kasconsp} to deduce the existence of a class $j_{1,k}\in KK^{\Gamma}(C_{0}(W),C_{0}(\underline{J}^{+}_{k}(V)))$, whose descent $j_{1,k}^{\Gamma}\in KK(C_{0}(W\rtimes\Gamma),C_{0}(\underline{J}^{+}_{k}(V)\rtimes\Gamma))$ implements an isomorphism on $K$-theory via the Kasparov product.  Letting $j_{0,1}^{\Gamma}\in KK(C_{0}(V\rtimes\Gamma),C_{0}(W\rtimes\Gamma))$ denote the bundle of metrics Kasparov module constructed in \cite[Section 5]{cyctrans}, 
Connes obtains a linear map
\[
K_{*}(C_{0}(V)\rtimes\Gamma)\ni a\mapsto \varpi_{c}(a\otimes_{C_{0}(V)\rtimes\Gamma} j_{0,1}^{\Gamma}\otimes_{C_{0}(W)\rtimes\Gamma} j_{1,k}^{\Gamma})\in\RB
\]
defined by $\varpi$.  In the case when $V = S^{1}$, one has $J^{+}_{2}(S^{1}) = S^{1}\times\RB^{*}_{+}\times\RB$, and if $dx$ is the standard volume form on $S^{1}$ then the Godbillon-Vey invariant is represented by the invariant differential form \cite[Lemma 7.7]{cyctrans}
\begin{equation}\label{connesgv}
\varpi = \frac{1}{y^{3}}dx\wedge dy\wedge dy_{1},\hspace{7mm}(x,y,y_{1})\in S^{1}\times\RB^{*}_{+}\times\RB.
\end{equation}
The associated cyclic cocycle $\varpi_{c}$ is the trace on $C_{c}^{\infty}(J^{+}_{2}(S^{1}))\rtimes\Gamma$ obtained by integration with respect to $\varpi$, and an involved calculation \cite[Theorem 7.3]{cyctrans} shows that the linear map thus obtained on $K_{0}(C_{0}(V)\rtimes\Gamma)$ is the cyclic cocycle given by Equation \eqref{cgv}.  Alternatively, one can obtain by essentially the same method a map
\begin{equation}\label{conneswgv}
K_{1}(C_{0}(W)\rtimes\Gamma)\ni a\mapsto\varpi_{c}(a\otimes_{C_{0}(W)\rtimes\Gamma}j^{\Gamma}_{1,2})\in\RB
\end{equation}
on the $K$-theory of $C_{0}(W)\rtimes\Gamma$.  In the next subsection we will indicate how the index pairing induced by the spectral triple in Theorem \ref{mainthm} can be thought of as a non-\'{e}tale version of the map in Equation \eqref{conneswgv}.

\subsection{The case of a general foliation}

In the setting of a transversely orientable foliated manifold $(M,\FF)$ of codimension $q$, one has access to the \emph{transverse jet bundles} $J^{+}_{k}(\FF)$ \cite[p. 113 - 114]{gcc}.  The fibre $J^{+}_{k}(\FF)_{x}$ over $x\in M$ now consists of the $k$-jets $j^{k}_{0}(\varphi)$ of orientation-preserving local diffeomorphisms $\varphi$ sending an open neighbourhood of $0\in\RB^{q}$ to a local \emph{transversal} about $x = \varphi(0)$.  Note in particular that $J^{+}_{1}(\FF)$ is the same thing as the oriented transverse frame bundle $F^{+}(N)$.

We have natural projections $\pi_{k}:J^{+}_{k}(\FF)\rightarrow M$ and $\pi_{k+1,k}:J^{+}_{k+1}(\FF)\rightarrow J^{+}_{k}(\FF)$ defined respectively by forgetting all partial derivatives, and by forgetting all partial derivatives of order $k+1$.  Moreover the holonomy groupoid $\GG$ acts naturally on each $J^{+}_{k}(\FF)$, and its orbits define a foliation $\FF_{k}$ of $J^{+}_{k}(\FF)$.  There is a canonical right action of $GL^{+}(q,\RB)$ on all of the $J^{+}_{k}(\FF)$, $k\geq 1$, which commutes with the action of $\GG$.

\begin{prop}\label{identify}
	Let $S^{2}(\RB^{q})$ denote the symmetric polynomials of degree 2 over the vector space $\RB^{q}$.  Then the bundle $J^{+}_{2}(\FF)$ is an affine bundle over $J^{+}_{1}(\FF)$, modelled on the vector bundle $\pi_{1}^{*}(N)\otimes S^{2}(\RB^{q})$.  Moreover, if $(M,\FF)$ is of codimension 1, a choice of Bott connection $\nabla^{\flat}$ on $N$ determines an affine isomorphism of $J^{+}_{2}(\FF)$ with the total space of the bundle $H^{*} = \big(\ker(\alpha^{\flat}/T\FF_{1})\big)^{*}$ over $J^{+}_{1}(\FF)$.
\end{prop}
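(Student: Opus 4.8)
The plan is to establish the two assertions separately, treating the affine bundle structure first and then the codimension-1 refinement using the Bott connection.

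For the first claim I would argue as in the standard theory of jet bundles. Fix a $2$-jet $j^2_0(\varphi)$ lying over a $1$-jet $\phi = j^1_0(\varphi)\in J^+_1(\FF) = F^+(N)$, so that $\phi = D\varphi(0):\RB^q\rightarrow N_x$ is a frame with $x = \varphi(0)$. Given a second representative $\psi$ with $j^1_0(\psi) = \phi$, the key step is to show that the difference of second-order Taylor coefficients $D^2\varphi(0) - D^2\psi(0)$ — a priori a symmetric bilinear map $\RB^q\times\RB^q\rightarrow N_x$ — is a well-defined element of $N_x\otimes S^2(\RB^q)$, independent of the transverse coordinates chosen. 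This follows from the chain rule: under a target diffeomorphism $g$ one has $D^2(g\circ\varphi)(0)(v,w) = Dg\cdot D^2\varphi(0)(v,w) + D^2g(D\varphi(0)v,D\varphi(0)w)$, and since $D\varphi(0) = D\psi(0)$ the purely second-order term cancels in the difference, leaving $Dg\cdot(D^2\varphi(0)-D^2\psi(0))$. This is exactly the transformation law of a section of $\pi_1^*(N)\otimes S^2(\RB^q)$ under $Dg$ acting on $N_x$, so the difference is tensorial; the fibres of $\pi_{2,1}$ are thus affine spaces modelled on $(\pi_1^*(N)\otimes S^2(\RB^q))_\phi$, and naturality in $x$ upgrades this to the asserted affine bundle structure.

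For the second claim I specialise to $q=1$, where $F^+(N)$ is a principal $\RB^*_+$-bundle and both $J^+_2(\FF)$ and $H^*$ are affine line bundles over $F^+(N)$. The idea is to build the isomorphism from the Bott connection form $\alpha^\flat$. I would identify a $2$-jet $j^2_0(\varphi)$ with the velocity at $0$ of the prolongation $j^1\varphi$ of $\varphi$ into $F^+(N)$, the curve through $\phi$ obtained by taking $1$-jets along $\varphi$. Writing $v := \tfrac{d}{dt}(j^1\varphi)(0)\in T_\phi F^+(N)$, the connection splits $v$ into a horizontal part, which depends only on the $1$-jet $\phi$, and a vertical part $\alpha^\flat(v)\in\mathfrak{gl}(1,\RB) = \RB$ carrying precisely the second-order data relative to $\nabla^\flat$. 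Because a Bott connection satisfies $T\FF_F\subset\ker(\alpha^\flat)$, the horizontal distribution descends to $H = \ker(\alpha^\flat)/T\FF_1$; using the frame $\phi$ together with $d\pi_Q:H\xrightarrow{\sim}\pi_1^*N$ to convert the scalar $\alpha^\flat(v)$ into a functional on $H$, I obtain a map $\Phi:J^+_2(\FF)\rightarrow H^*$ over the identity of $F^+(N)$. I would then verify that $\Phi$ is well-defined (only $j^2_0(\varphi)$ enters), that its linear part is the identification of $\pi_1^*(N)\otimes S^2(\RB)$ with $H^*$ induced by $d\pi_Q$ and the tautological trivialisation of $N$ over $F^+(N)$, and that it is a fibrewise bijection by a one-dimensional count; the connection enters only in fixing the reference $2$-jet sent to the zero covector, namely the $\alpha^\flat$-horizontal prolongation. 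Matching this reference with the zero section of $H^*$ gives the affine isomorphism.

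The main obstacle I anticipate is the bookkeeping in the second part: making the prolongation-plus-connection construction precise enough that the $\mathfrak{gl}(q,\RB)$-valued vertical datum is correctly identified, in codimension $1$, with a horizontal covector in $H^*$, and confirming that the induced linear part really is the model identification rather than a twist of it. The cleanest verification is a coordinate check against Section \ref{subsec:simp}: writing a transversal parametrisation as $t\mapsto\varphi(t)$, the invariant second-order coordinate is $y_1 = \varphi''(0)/\varphi'(0) = (\log\varphi')'(0)$, and I expect $\Phi$ to send this (after subtracting the connection's reference value) to the fibre coordinate $\eta$ of $H^*$. This reproduces the $ax+b$-type transformation recorded in Lemma \ref{identities} and matches $\delta(u) = \partial\log\Delta(u)$, confirming both that $\Phi$ intertwines the $\GG$-actions and that it is the Bott property $T\FF_F\subset\ker\alpha^\flat$ — rather than an arbitrary connection — that is exactly what is required for the descent to $H$.
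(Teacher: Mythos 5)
Your treatment of the first claim is the paper's argument in slightly different clothing: the paper writes out the transition functions of $\pi_{2,1}:J^{+}_{2}(\FF)\rightarrow J^{+}_{1}(\FF)$ in foliated coordinates and reads off from the chain rule that they are affine with linear part the natural action on $\pi_{1}^{*}(N)\otimes S^{2}(\RB^{q})$, while you show that the difference of two $2$-jets over a common $1$-jet transforms tensorially; these are equivalent formulations of the same chain-rule computation. For the second claim you take a genuinely different route. The paper restricts the torsion-free connection to local transversals and uses its exponential maps to build a global, $GL^{+}(q,\RB)$-equivariant section $\sigma_{\nabla}(j^{1}_{0}(\varphi)) = j^{2}_{0}(\exp^{\nabla}_{x}\circ j^{1}_{0}(\varphi))$ of $\pi_{2,1}$; a section of an affine bundle trivialises it by its model, which in codimension $1$ is $\pi_{1}^{*}(N)\cong H$, and $H\cong H^{*}$ via the tautological metric of Proposition \ref{bundleofmetrics}. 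You instead encode a $2$-jet as the velocity of its $1$-jet prolongation into $F^{+}(N)$ and extract the fibre datum by applying $\alpha^{\flat}$. The two constructions coincide: by the geodesic equation, the prolongation of $y\mapsto\exp^{\nabla^{\flat}}_{x}(y\,\phi(1))$ is precisely the $\alpha^{\flat}$-horizontal curve through $\phi$, so your zero locus is the paper's section $\sigma_{\nabla^{\flat}}$, and your linear part is the same canonical vertical identification. Your version is manifestly coordinate-free and isolates exactly where the Bott property enters (only in guaranteeing $T\FF_{1}\subset\ker\alpha^{\flat}$, so that $H$ exists and $d\pi_{Q}$ identifies it with $\pi_{1}^{*}N$); the paper's exponential section, on the other hand, is what the subsequent theorem computes with (its coordinate $u_{0}$ is the exponential normal coordinate), so the paper's formulation feeds directly into the identification of $\gv$ with $d\nu_{H^{*}}$.

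One bookkeeping point in your proposed sanity check is off, though it lies squarely inside the verification you yourself flagged: converting the vertical scalar $\alpha^{\flat}(v)$ into a covector on $H$ consumes the frame once more, so the fibre coordinate that matches $\eta$ is $\varphi''(0)/\varphi'(0)^{2}$ (relative to exponential coordinates), not $(\log\varphi')'(0) = \varphi''(0)/\varphi'(0)$. This is what the paper's computation gives ($u_{1} = t = e^{c}$, $u_{2} = h = e^{2c}\eta$), and indeed under $\psi = h\circ\varphi$ one finds that $\varphi''/(\varphi')^{2}$ transforms by the scaling $\Delta(u)^{-1} = \theta(u)$ plus a shift independent of the fibre coordinate, reproducing (up to the paper's conventions) the $ax+b$ law of Lemma \ref{identities} and Proposition \ref{HGspace}, whereas $\varphi''/\varphi'$ picks up a shift proportional to $\varphi'$ and no $\theta$-scaling. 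Since you use this only as a verification target, it does not damage the construction, but the check would fail as literally stated and should be run with the corrected coordinate.
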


\begin{proof}
	That $J^{+}_{2}(\FF)$ is an affine bundle over $J^{+}_{1}(\FF)$ modelled on $\pi_{1}^{*}(N)\otimes S^{2}(\RB^{q})$ can be seen using local coordinates.  Indeed, if $U\subset \RB^{q}$ is an open neighbourhood of $0$ and $T_{\beta}$ is a local transversal in $M$, then the 2-jet at $0$ of any diffeomorphism $\varphi:U\rightarrow T$ is distinguished from its 1-jet at $0$ by the partial derivatives
	\[
	\frac{\partial^{2}\varphi^{i}}{\partial y^{j}\partial y^{k}}\bigg|_{0},\hspace{7mm}i,j,k = 1,\dots,q
	\]
	which are elements of $N_{x}\otimes S^{2}(\RB^{q})$ (here we have identified $\RB^{q}$ with its dual in the natural way).  The chain rule implies that whenever $c_{\alpha\beta}:T_{\beta}\rightarrow T_{\alpha}$ is the diffeomorphism defined by a transverse change of coordinates, then the corresponding coordinate change in the fibre $J^{+}_{2}(\FF)_{j^{1}_{0}(\varphi)}$ is given by the affine transformation
	\[
	\bigg(\frac{\partial^{2}\varphi^{i}}{\partial y^{j}\partial y^{k}}\bigg|_{0}\bigg)_{i,j,k = 1,\dots,q}\mapsto \bigg(\frac{\partial c^{i}_{\alpha\beta}}{\partial y^{l}}\bigg|_{\varphi(0)}\frac{\partial^{2}\varphi^{l}}{\partial y^{j}\partial y^{k}}\bigg|_{0}+\frac{\partial^{2}c_{\alpha\beta}^{i}}{\partial y^{j}\partial y^{k}}\bigg|_{\varphi(0)}\bigg)_{i,j,k=1,\dots,q},
	\]
	where we have used the Einstein summation convention.
	
	Suppose now that $\nabla$ is a torsion-free connection on $N$.  Then on any sufficiently small transversal $\mathcal{Y}_{x}$ about $x\in M$, $\nabla$ restricts to a torsion-free affine connection and is therefore associated with an exponential map $\exp^{\nabla}_{x}$ which sends an open neighbourhood of $0\in N_{x} = T_{x}\mathcal{Y}_{x}$ diffeomorphically onto $\mathcal{Y}_{x}$.  We therefore obtain a section $\sigma_{\nabla}:J^{+}_{1}(\FF)\rightarrow J^{+}_{2}(\FF)$ defined by
	\begin{equation}\label{sect12}
	\sigma_{\nabla}(j^{1}_{0}(\varphi)):=j^{2}_{0}(\exp^{\nabla}_{x}\circ j^{1}_{0}(\varphi)),
	\end{equation}
	where we consider $j^{1}_{0}(\varphi)$ as a frame $\RB^{q}\rightarrow N_{x}$, which by the chain rule is equivariant under the right action of $GL^{+}(q,\RB)$.  Therefore $\nabla$ determines an affine isomorphism of $J^{+}_{2}(\FF)$ with the vector bundle $\pi_{1}^{*}(N)\otimes S^{2}(\RB^{q})$ on which it is modelled.  In particular, if $(M,\FF)$ is codimension 1, $\nabla$ determines an affine isomorphism of $J^{+}_{2}(\FF)$ with $\pi_{1}^{*}(N)$.  Therefore if $\nabla = \nabla^{\flat}$ is a Bott connection, we attain the stated isomorphism of $J^{+}_{2}(\FF)$ with $H\cong\pi_{1}^{*}(N)$, and hence with $H^{*}\cong H$ via the tautological Euclidean structure of Proposition \ref{bundleofmetrics}.
\end{proof}

Now any Gelfand-Fuchs class $\varpi$ admits a canonical representative in the $\GG$-invariant forms on $\underline{J}^{+}_{k}(\FF)$ for sufficiently large $k$ \cite[p. 117 - 119]{gcc}.  Let us briefly describe this canonical representative in the case of the Godbillon-Vey invariant of a codimension 1 foliation.  Let $\varphi_{t}:U\rightarrow\mathcal{Y}$ be a 1-parameter family of diffeomorphisms of some open neighbourhood $U$ of $0\in\RB$ onto a local transversal $\mathcal{Y}$ in $M$, determining tangent vectors
\[
X_{k}:=\frac{d}{dt}\bigg|_{t=0}j^{k+1}_{0}(\varphi_{t})
\]
in $J^{+}_{k+1}(\FF)$, for all $k\in\NB\cup\{0\}$.  Let $\varphi:=\varphi_{0}$, and define a 1-parameter family of coordinates $x_{t}:=\varphi^{-1}\circ\varphi_{t}$.  Then we define the \emph{$k^{th}$ tautological 1-form} $\omega^{k}$ on $J^{+}_{k+1}(\FF)$ by the formula
\begin{equation}\label{omegak}
\omega^{k}(X_{k}) = \frac{d}{dt}\bigg|_{t=0}\bigg(\frac{d^{k} x_{t}}{d y^{k}}\bigg|_{y=0}\bigg)
\end{equation}
for all $k\in\NB\cup\{0\}$.  Such tautological forms were introduced (in the non-foliated case) by Kobayashi \cite{kob}.  Notice that if $u\in \GG$  is represented by a diffeomorphism $h:\mathcal{Y}\rightarrow\mathcal{Y}'$ of local transversals, then
\[
(h^{*}\omega^{k})(X_{k}) = \omega^{k}\bigg(\frac{d}{dt}\bigg|_{t=0}j^{k+1}_{0}(h\circ\varphi_{t})\bigg) = \frac{d}{dt}\bigg|_{t=0}\bigg(\frac{d^{k}(\varphi^{-1}\circ h^{-1}\circ h\circ \varphi_{t})}{dy^{k}}\bigg) = \omega^{k}(X_{k}),
\]
so the $\omega^{k}$ are $\GG$-invariant.  Pulling back via the projections, let us assume that $\omega^{0}$, $\omega^{1}$, and $\omega^{2}$ are all defined on $J^{+}_{3}(\FF)$.  Then, using the $\GG$ invariance of these forms together with \cite[Proposition 3.23]{gcc}, it can be shown that the $\omega^{i}$ satisfy the structure equations
\begin{equation}\label{struct}
d\omega^{0} = -\omega^{1}\wedge\omega^{0},\hspace{7mm} d\omega^{1} = -\omega^{2}\wedge\omega^{0}.
\end{equation}
Now by Equation \eqref{omegak} the form $\omega^{0}$ on $J^{+}_{1}(\FF)$ is, by definition, simply the solder form on the transverse frame bundle.  Therefore the first of the Equations \eqref{struct} says that $\omega^{1}$ on $J^{+}_{2}(\FF)$ behaves like a torsion-free connection form: indeed, the pullback $\sigma_{\nabla}^{*}\omega^{1}$ of $\omega^{1}$ by the section $\sigma_{\nabla}:J^{+}_{1}(\FF)\rightarrow J^{+}_{2}(\FF)$ defined by any torsion-free connection $\nabla$ on $N$ is precisely the connection form corresponding to $\nabla$.  Therefore the second of the Equations \eqref{struct} is a formula for the ``curvature" of the ``connection form" $\omega^{1}$.  In particular, the Godbillon-Vey form on $J^{+}_{3}(\FF)$ is simply the (negative of) the ``connection" wedged with its ``curvature" \cite[Lemma 10.9]{bott2}:
\begin{equation}\label{gvtaut}
\gv = -\omega^{1}\wedge d\omega^{1} = \omega^{0}\wedge\omega^{1}\wedge\omega^{2}.
\end{equation}
It is by no means obvious that the form in Equation \eqref{gvtaut} is related to the form $d\nu_{H^{*}}$ on $H^{*}$ considered in Equation \eqref{volforms}.  The next result tells us that in fact these forms are the same, and therefore justifies our claim that the cocycle obtained as the index formula from Theorem \ref{mainthm} truly does represent the Godbillon-Vey invariant.

\begin{thm}
	Let $(M,\FF)$ be a transversely oriented foliation of codimension 1, with transverse volume form $\omega\in\Omega^{1}(M)$.  Suppose moreover we are gifted with a torsion-free Bott connection on $N$, giving an identification $J^{+}_{2}(\FF)\cong H^{*}$ as in Proposition \ref{identify}.  Then the form $\gv$ of Equation \eqref{gvtaut} on $J^{+}_{3}(\FF)$ descends to a form on $J^{+}_{2}(\FF)\cong H^{*}$, which, in the trivialisation $H^{*} = M\times\RB\times\RB$ determined by $\omega$ as in Subsection \ref{subsec:simp}, coincides the the form $d\nu_{H^{*}} = \omega\wedge dc\wedge d\eta$ of Equation \eqref{volforms}.
\end{thm}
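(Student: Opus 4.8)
The plan is to compute both forms in explicit codimension-$1$ jet coordinates and match them through the Bott-connection identification of Proposition \ref{identify}. First I would fix local transverse coordinates in which $\omega = dx$, and coordinatise a $k$-jet $j^{k}_{0}(\varphi)$ by the Taylor data of $\varphi$, writing $y:=\varphi'(0)$ for the frame coordinate on $J^{+}_{1}(\FF)$, $z:=\varphi''(0)$ for the extra second-order coordinate on $J^{+}_{2}(\FF)$, and $w:=\varphi'''(0)$ for the third-order coordinate on $J^{+}_{3}(\FF)$. Differentiating the defining formula \eqref{omegak} along a variation $\varphi_{t}=\varphi+t\psi+O(t^{2})$ gives $\omega^{k}=\frac{d^{k}}{dy^{k}}\big|_{0}(\psi/\varphi')$, from which $\omega^{0}=y^{-1}\,dx$ is read off; this is just the solder form. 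I would then obtain $\omega^{1}$ and $\omega^{2}$ most economically from the structure equations \eqref{struct}: solving $d\omega^{0}=-\omega^{1}\wedge\omega^{0}$ gives $\omega^{1}=y^{-1}\,dy-y^{-2}z\,dx$, and solving $d\omega^{1}=-\omega^{2}\wedge\omega^{0}$ pins down $\omega^{2}$ modulo $\omega^{0}$ as $\omega^{2}\equiv y^{-1}\,dz-2y^{-2}z\,dy\ \ (\mathrm{mod}\ \omega^{0})$.

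The descent statement then falls out of a degree count. Since $\omega^{0}\wedge\omega^{1}=y^{-2}\,dx\wedge dy$ involves only $dx$ and $dy$, the modulo-$\omega^{0}$ ambiguity in $\omega^{2}$ is annihilated upon wedging, and only the $dz$-part of $\omega^{2}$ survives. In particular the third-order coordinate $w$ enters $\omega^{2}$ only as a coefficient of $dx$ (the tautological form involves at most second derivatives of $\psi$, so there is no $dw$-differential), and this coefficient is killed by the $dx$ already present in $\omega^{0}\wedge\omega^{1}$. Hence
\begin{equation*}
\gv=\omega^{0}\wedge\omega^{1}\wedge\omega^{2}=\frac{1}{y^{3}}\,dx\wedge dy\wedge dz,
\end{equation*}
which is free of both $dw$ and $w$, so is basic for the projection $\pi_{3,2}\colon J^{+}_{3}(\FF)\to J^{+}_{2}(\FF)$; this is precisely the descent to $J^{+}_{2}(\FF)$, and it recovers Connes' representative \eqref{connesgv} under the identification $y_{1}=z$.

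It then remains to rewrite this in the coordinates $(x,c,\eta)$ of the trivialisation $H^{*}=M\times\RB\times\RB$ from Subsection \ref{subsec:simp}, using the identification of Proposition \ref{identify}. That identification is an affine bundle isomorphism $J^{+}_{2}(\FF)\cong H^{*}$ over $J^{+}_{1}(\FF)=Q$, affine in $z$ on each fibre. Its fibre-linear part is the composite of the model-bundle isomorphism $\pi_{1}^{*}(N)\otimes S^{2}(\RB)\cong H$ (the frame normalisation) with the metric duality $H\cong H^{*}$, $h\mapsto e^{-2c}h$, of Subsection \ref{subsec:simp}; its net effect I expect to be $\partial\eta/\partial z=e^{-2c}=y^{-2}$. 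The affine shift is the connection section $\sigma_{\nabla^{\flat}}$ of \eqref{sect12}, which depends only on the base point: expanding $\exp^{\nabla^{\flat}}_{x}(v)=x+v-\tfrac12\Gamma(x)v^{2}+O(v^{3})$ with $v=ys$ gives $z_{\sigma}=-\Gamma(x)y^{2}$, so in coordinates $\eta=y^{-2}z+G(x,c)$ for a connection-dependent function $G$. Then, with $c=\log y$ so that $dc=y^{-1}\,dy$ and $d\eta=y^{-2}\,dz$ modulo $dx$ and $dy$, one obtains
\begin{equation*}
d\nu_{H^{*}}=\omega\wedge dc\wedge d\eta=dx\wedge\frac{dy}{y}\wedge\frac{dz}{y^{2}}=\frac{1}{y^{3}}\,dx\wedge dy\wedge dz=\gv,
\end{equation*}
as required.

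The main obstacle is this last paragraph: correctly threading the fibre coordinate through the three successive identifications of Proposition \ref{identify} and tracking the powers of $y=e^{c}$, in particular verifying the decisive Jacobian factor $\partial\eta/\partial z=e^{-2c}$ by weighing the frame-induced powers of $y$ against the metric factor. The tautological-form computation and the descent are essentially mechanical once \eqref{struct} is in hand; the delicate point is that the identification $J^{+}_{2}(\FF)\cong H^{*}$ depends on the choice of Bott connection through the shift $z_{\sigma}=-\Gamma(x)y^{2}$, whereas $\gv$ does not. This is consistent precisely because the connection-dependent term $G(x,c)$ contributes only $dx$- and $dc$-components to $d\eta$, which are annihilated in the wedge $\omega\wedge dc\wedge d\eta$; confirming that $\Gamma$ enters nowhere else is the one genuinely content-bearing check.
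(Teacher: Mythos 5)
Your proposal is correct and takes essentially the same route as the paper: you compute the tautological forms $\omega^{0},\omega^{1},\omega^{2}$ in explicit jet coordinates, obtain $\gv = y^{-3}\,dx\wedge dy\wedge dz$ so that descent along $\pi_{3,2}$ is immediate from the absence of $w$ and $dw$, and then substitute $y=e^{c}$, $\eta = e^{-2c}z + G(x,c)$, which is exactly the paper's computation via the relations \eqref{dodgys} and the substitutions $u_{1}=t=e^{c}$, $u_{2}=h=e^{2c}\eta$. The only differences are cosmetic: the paper works in the connection-normal coordinate $u_{0}$ built from $\exp^{\nabla^{\flat}}$, so your Christoffel shift $G$ never appears (your check that $G$ contributes only $dx$- and $dc$-components killed by $\omega\wedge dc$ is the explicit version of this), and your determination of $\omega^{1},\omega^{2}$ from the structure equations \eqref{struct} only pins them modulo $\omega^{0}$, which is harmless since that ambiguity is annihilated in the triple wedge, as you note.
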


\begin{proof}
	Associated to the transverse volume form $\omega$ is a nonvanishing normal vector field $Z\in\Gamma^{\infty}(M;N)$ characterised by the equation $\omega(Z)\equiv 1$.  Fix $x\in M$ and let $\mathcal{Y}_{x}$ be a local transversal through $x$.  Then the torsion-free Bott connection $\nabla^{\flat}$ on $N$ restricts to an affine connection on $\mathcal{Y}_{x}$, and so determines an exponential map $\exp^{\nabla^{\flat}}:U\rightarrow\mathcal{Y}_{x}$ which is a local diffeomorphism defined on an open neighbourhood $U$ of $0\in T_{x}\mathcal{Y}_{x}$.  Rescaling $\omega$ if necessary, we can always assume that $Z_{x}\in U$ and we obtain a coordinate $u_{0}:\mathcal{Y}_{x}\rightarrow\RB$ defined by the equation
	\[
	u_{0}(x')Z_{x} = \big(\exp^{\nabla^{\flat}}\big)^{-1}(x'), \hspace{7mm}x'\in\mathcal{Y}_{x}.
	\]
	Now fix a local diffeomorphism $\varphi$ from an open neighbourhood of $0\in\RB$ to $\mathcal{Y}_{x}$.  The coordinate $u_{0}$ on $\mathcal{Y}_{x}$ identifies $\varphi$ with a local diffeomorphism $\tilde{\varphi}:=u_{0}\circ\varphi$ of $\RB$, so the 3-jet $j^{3}_{0}(\varphi)$ is determined by the polynomial
	\[
	\tilde{\varphi}(0)+\frac{d\tilde{\varphi}}{dy}\bigg|_{0}y+\frac{d^{2}\tilde{\varphi}}{dy^{2}}\bigg|_{0}y^{2} + \frac{d^{3}\tilde{\varphi}}{dy^{3}}\bigg|_{0}y^{3}
	\]
	where we use $y$ to denote the standard coordinate in $\RB$.  We thus define coordinates $u_{i}(j^{3}_{0}(\varphi)):=\frac{d^{i}\tilde{\varphi}}{dy^{i}}\big|_{0}$ for $i = 1,2,3$ for $j^{3}_{0}(\varphi)\in J^{+}_{3}(\mathcal{Y}_{x})$.  Suppose now that $\varphi_{t}$ is a 1-parameter family of local diffeomorphisms from an open neighbourhood of $0\in\RB$ to $\mathcal{Y}_{x}$, with $\varphi_{0} = \varphi$.  The coordinate representation $\tilde{\varphi}_{t}:=u_{0}\circ\varphi_{t}$of the family $\varphi_{t}$ determines a curve
	\[
	j^{3}_{0}(\tilde{\varphi}_{t}) = \bigg(\tilde{\varphi}_{t}(0),\frac{d\tilde{\varphi}_{t}}{dx}\bigg|_{0},\frac{d^{2}\tilde{\varphi}_{t}}{dx^{2}}\bigg|_{0}\frac{d^{3}\tilde{\varphi}_{t}}{dx^{3}}\bigg|_{0}\bigg)
	\]
	in $J^{+}_{3}(\RB)$, hence we can write the tangent vector $X = \frac{d}{dt}\big|_{0}j^{3}_{0}(\varphi_{t})$ on $J^{+}_{3}(\mathcal{Y}_{x})$ determined by the curve $j^{3}_{0}(\varphi_{t})$ in the form
	\[
	X = \frac{d\tilde{\varphi}_{t}}{dt}\bigg|_{0}\frac{\partial}{\partial u^{0}}+\frac{d}{dt}\bigg|_{0}\bigg(\frac{d\tilde{\varphi}_{t}}{dy}\bigg|_{0}\bigg)\frac{\partial}{\partial u^{1}}+\frac{d}{dt}\bigg|_{0}\bigg(\frac{d^{2}\tilde{\varphi}_{t}}{dy^{2}}\bigg|_{0}\bigg)\frac{\partial}{\partial u^{2}}+\frac{d}{dt}\bigg|_{0}\bigg(\frac{d^{3}\tilde{\varphi}_{t}}{dy^{3}}\bigg|_{0}\bigg)\frac{\partial}{\partial u^{3}}.
	\]
	Setting $h_{t}:=\varphi^{-1}\circ\varphi_{t}$ we have $\tilde{\varphi}_{t} = \tilde{\varphi}\circ h_{t}$.  Then using the chain rule together with the fact that $h_{0} = \id_{\RB}$, we compute
	\[
	\frac{d\tilde{\varphi}_{t}}{dt}\bigg|_{0} = \frac{d}{dt}\bigg|_{0}(\tilde{\varphi}\circ h_{t}) = u_{1}\frac{dh_{t}}{dt}\bigg|_{0},
	\]
	\[
	\frac{d}{dt}\bigg|_{0}\bigg(\frac{d\tilde{\varphi}_{t}}{dy}\bigg|_{0}\bigg) = \frac{d}{dt}\bigg|_{0}\bigg(\frac{d(\tilde{\varphi}\circ h_{t})}{dy}\bigg|_{0}\bigg) = u_{2}\frac{dh_{t}}{dt}\bigg|_{0}+u_{1}\frac{d}{dt}\bigg|_{0}\bigg(\frac{d h_{t}}{dy}\bigg|_{0}\bigg),
	\]
	and
	\[
	\frac{d}{dt}\bigg|_{0}\bigg(\frac{d^{2}\tilde{\varphi}_{t}}{dy^{2}}\bigg|_{0}\bigg) = \frac{d}{dt}\bigg|_{0}\bigg(\frac{d^{2}(\tilde{\varphi}\circ h_{t})}{dy^{2}}\bigg|_{0}\bigg) = u_{3}\frac{dh_{t}}{dt}\bigg|_{0}+2u_{2}\frac{d}{dt}\bigg|_{0}\bigg(\frac{dh_{t}}{dy}\bigg|_{0}\bigg)+u_{1}\frac{d}{dt}\bigg|_{0}\bigg(\frac{d^{2} h_{t}}{dy^{2}}\bigg|_{0}\bigg).
	\]
	Therefore by Equation \eqref{omegak} we find that\footnote{The formulae in Equation \eqref{dodgys} that we compute here differ slightly from the analogous equations of Kobayashi \cite[Section 4]{kob} and Connes-Moscovici \cite[p. 45]{backindgeom}, for whom the summands containing $\omega^{0}$ in the second and third equations have factors of 2 and 3 respectively.  The reader can easily verify using elementary calculus that our own computations do \emph{not} give rise to these factors.  In the absence of any explicit computations provided by Kobayashi and Connes-Moscovici, it is difficult to determine why these additional factors appear in their equations.  In any case, these additional factors have no impact on the coordinate expression we obtain for the Godbillon-Vey differential form.}
	\begin{equation}\label{dodgys}
	du_{0} = u_{1}\omega^{0},\hspace{5mm}du_{1} = u_{2}\omega^{0}+u_{1}\omega^{1},\hspace{5mm} du_{2} = u_{3}\omega^{0}+2u_{2}\omega^{1}+u_{1}\omega^{2},
	\end{equation}
	and we deduce that
	\begin{equation}\label{omegagv}
	\omega^{0}\wedge\omega^{1}\wedge\omega^{2} = \frac{1}{u_{1}^{3}}du_{0}\wedge du_{1}\wedge du_{2},
	\end{equation}
	which is a well-defined form on $J^{+}_{2}(\mathcal{Y}_{x})$.
	
	Now we come to transporting the form of Equation \eqref{omegagv} on $J^{+}_{2}(\mathcal{Y}_{x})\subset J^{+}_{2}(\FF)$ to the total space of the bundle $H^{*}$ over $J^{+}_{1}(\FF)$ as in Proposition \ref{identify}.  The transverse volume form $\omega\in\Omega^{1}(M)$ determines a trivialisation
	\[
	J^{+}_{1}(\FF)\ni\phi_{x}\mapsto(x,\omega_{x}(\phi_{x}(1)))=:(x,t)\in M\times\RB^{*}_{+},
	\]
	where we think of $\phi_{x} = d\varphi_{0}$ as a frame $\RB\rightarrow N_{x}$.  The transverse vector field $Z\in\Gamma^{\infty}(M;N)$ corresponding to $\omega$ determines a trivialisation
	\[
	N\ni hZ_{x}\mapsto (x,h)\in M\times\RB,\hspace{5mm}x\in M.
	\]
	of $N$ and therefore a corresponding trivialisation $H\cong J^{+}_{1}(\FF)\times\RB\cong M\times\RB^{*}_{+}\times\RB$ of $H\cong\pi_{1}^{*}N$.  Unlike the coordinates $u_{i}$ used for the transversal $\mathcal{Y}_{x}$ in the first part of the proof, the trivialisation $H\cong M\times\RB^{*}_{+}\times\RB$ is global, and we must show that on $\mathcal{Y}_{x}$, we have equalities $du_{0} = \omega$, $u_{1} = t$ and $u_{2} = h$.  
	
	Now $du_{0}(Z)\equiv1$ by definition of the coordinate $u_{0}$ on $\mathcal{Y}_{x}$ so $du_{0} = \omega$.  For $u_{1}$ we see that
	\[
	u_{1} = \frac{d(u_{0}\circ\varphi)}{dy} = du_{0}\circ d\varphi(1) = \omega_{x}(d\varphi(1)) = t
	\]
	by definition of the variable $t$.  Finally, in the trivial bundle $J^{+}_{2}(\RB) = \RB\times\RB^{*}_{+}\times\RB$ that is the image of $J^{+}_{2}(\mathcal{Y}_{x})$ under the coordinates $(u_{0},u_{1},u_{2})$, the $u_{2}$ variable identifies with the tangent variable for $\RB$ in the manner of Proposition \ref{identify}.  Viewed as coordinates on $J^{+}_{2}(\mathcal{Y}_{x})$ and $T\mathcal{Y}_{x}$ respectively, we then have $u_{2} = h$ and therefore
	\[
	\gv = \frac{1}{u_{1}^{3}}\,du_{0}\wedge du_{1}\wedge du_{2} = -\frac{1}{t^{3}}\,\omega\wedge dt\wedge dh.
	\]
	Finally, as in Subsection \ref{subsec:simp}, we make the substitutions $t = e^{c}$ giving $dt = e^{c}dc$, and $h = e^{2c}\eta$ giving $dh = e^{2c}d\eta$.  Thus, on $H^{*}$, we find that
	\[
	\gv = \omega\wedge dc\wedge d\eta = d\nu_{H^{*}}
	\]
	as claimed.
\end{proof}

\section{Concluding remarks}

It is tempting to view the higher codimension version 
of the codimension 1 
Kasparov
module and spectral triple as analogous data 
representing the Godbillon-Vey invariant in 
higher codimension. Sadly, despite the naturality of the constructions
presented here, it is far from clear that such an interpretation is warranted.
Without an identification of the Chern character of these spectral triples
with the Godbillon-Vey class, they must remain an interesting construction.

One final remark on the constructions presented here: they all pass to
real algebras and real $KK$-theory. All our constructions are Real
\cite{KasparovTech} for the
obvious variations of complex conjugation, in part because of our systematic use
of the exterior algebra rather than the spinor bundle. This means that
we can at all stages retain contact with homology of manifolds with real coefficients.

\bibliographystyle{amsplain}
\bibliography{references}

\providecommand{\bysame}{\leavevmode\hbox to3em{\hrulefill}\thinspace}
\providecommand{\MR}{\relax\ifhmode\unskip\space\fi MR }
\providecommand{\MRhref}[2]{%
  \href{http://www.ams.org/mathscinet-getitem?mr=#1}{#2}
}
\providecommand{\href}[2]{#2}
\begin{thebibliography}{10}

\bibitem{iakovos}
I.~Androulidakis and G.~Skandalis, \emph{{A Baum-Connes conjecture for singular
  foliations}}, to appear in Annals of K-Theory, arXiv:1509.05862, 2015.

\bibitem{baajjulg1}
S.~Baaj and P.~Julg, \emph{{Theorie bivariante de Kasparov et operateurs
  non-born\'{e}s dans les $C^{*}$-modules Hilbertiens}}, {C. R. Acad. Sci.
  Paris} \textbf{296} (1983), 875--878.

\bibitem{benfack}
M.~T. Benameur and T.~Fack, \emph{{Type II non-commutative geometry. I. Dixmier
  trace in von Neumann algebras}}, {Adv. Math.} \textbf{199} (2006), 29--87.

\bibitem{ben2}
M.~T. Benameur and J.~Heitsch, \emph{{Transverse noncommutative geometry of
  foliations}}, {J. Geom. Phys.} \textbf{134} (2018), 161--194.

\bibitem{bott1}
R.~Bott, \emph{{On a topological obstruction to integrability}}, {Proc. Sympos.
  Pure Math.} \textbf{16} (1970), 127--131.

\bibitem{bott2}
\bysame, \emph{{Lectures on characteristic classes and foliations}}, {Lectures
  on Algebraic and Differential Topology}, vol. 279, Springer, 1972.

\bibitem{bott3}
\bysame, \emph{{On characteristic classes in the framework of Gelfand-Fuks
  cohomology}}, Ast\'{e}rique \textbf{32--33} (1976), 113--139.

\bibitem{botthaef}
R.~Bott and A.~Haefliger, \emph{{On characteristic classes of
  $\Gamma$-foliations}}, {Bull. Amer. Math. Soc.} \textbf{78} (1972),
  1039--1044.

\bibitem{cc1}
A.~Candel and L.~Conlon, \emph{{Foliations I (Graduate Studies in
  Mathematics)}}, Amer. Math. Soc., 1999.

\bibitem{cantcon}
J.~Cantwell and L.~Conlon, \emph{{The dynamics of open foliated manifolds and a
  vanishing theorem for the Godbillon-Vey class}}, Adv. Math. \textbf{53}
  (1984), 1--27.

\bibitem{cgrs1}
A.~Carey, V.~Gayral, A.~Rennie, and F.~Sukochev, \emph{{Integration on locally
  compact noncommutative spaces}}, {J. Funct. Anal.} \textbf{263} (2012),
  383--414.

\bibitem{cgrs2}
\bysame, \emph{{Index theory for locally compact noncommutative geometries
  (Memoirs of the American Mathematical Society)}}, {Amer. Math. Soc.}, 2014.

\bibitem{cp1}
A.~Carey and J.~Phillips, \emph{{Unbounded Fredholm modules and spectral
  flow}}, Can. J. Math. \textbf{50} (1998), 673--718.

\bibitem{cprs1}
A.~Carey, J.~Phillips, A.~Rennie, and F.~Sukochev, \emph{{The Hochschild class
  of the Chern character for semifinite spectral triples}}, {J. Funct. Anal.}
  \textbf{213} (2004), 111--153.

\bibitem{cprs2}
\bysame, \emph{{The local index formula in semifinite von Neumann algebras I:
  Spectral flow}}, {Adv. Math.} \textbf{202} (2006), 451--516.

\bibitem{cprs3}
\bysame, \emph{{The local index formula in semifinite von Neumann algebras II:
  The even case}}, {Adv. Math.} \textbf{202} (2006), 517--554.

\bibitem{cprs4}
\bysame, \emph{{The Chern character of semifinite spectral triples}}, {J.
  Noncommut. Geom.} \textbf{2} (2008), 141--193.

\bibitem{folops}
A.~Connes, \emph{{A survey of foliations and operator algebras}}, Proc. Sympos.
  Pure, 1982, pp.~38--521.

\bibitem{cyctrans}
\bysame, \emph{{Cyclic cohomology and the transverse fundamental class of a
  foliation}}, {Geometric methods in operator algebras (Kyoto, 1983)} (H.~Araki
  and E.~G. Effros, eds.), {Longman Sci. Tech., Harlow}, 1986, pp.~52--144.

\bibitem{ncg}
\bysame, \emph{{Noncommutative Geometry}}, {Academic Press}, 1994.

\bibitem{CC}
A.~Connes and J.~Cuntz, \emph{Quasi homomorphismes, cohomologie cyclique et
  positivit\'{e}}, Comm. Math. Phys. \textbf{114} (1988), 515--52.

\bibitem{diffcyc}
A.~Connes and H.~Moscovici, \emph{{Differentiable cyclic cohomology and Hopf
  algebraic structures in transverse geometry}}, {Essays on geometry and
  related topics, Vol. 1, 2}, vol.~38, pp.~217--255, {Monogr. Enseign. Math.,
  Geneva}, 2001.

\bibitem{backindgeom}
\bysame, \emph{{Background independent geometry and Hopf cyclic cohomology}},
  arXiv:math/0505475v1, 2015.

\bibitem{CM}
A.~Connes and M.~Moscovici, \emph{{The local index formula in noncommutative
  geometry}}, {GAFA} \textbf{5} (1995), 174 -- 243.

\bibitem{hopf1}
\bysame, \emph{{Hopf algebras, cyclic cohomology and transverse index theory}},
  {Comm. Math. Phys.} \textbf{198} (1998), 199 -- 246.

\bibitem{cs}
A.~Connes and G.~Skandalis, \emph{The longitudinal index theorem for
  foliations}, Publ. RIMS \textbf{20} (1984), 1139--1183.

\bibitem{crainic2}
M.~Crainic and I.~Moerdijk, \emph{Foliation groupoids and their cyclic
  homology}, Adv. Math. \textbf{157} (2001), 177--197.

\bibitem{crainic1}
\bysame, \emph{{Cech-De Rham theory for leaf spaces of foliations}}, {Math.
  Ann.} \textbf{328} (2004), 59--85.

\bibitem{Duminy}
G.~Duminy, \emph{{L'invariant de Godbillon-Vey d'un feuilletage se localise
  dans les feuilles ressort}}, 1982.

\bibitem{fackkosaki}
T.~Fack and H.~Kosaki, \emph{{Generalized $s$-numbers of $\tau$-measurable
  operators}}, {Pacific J. Math.} \textbf{123} (1986), 269--300.

\bibitem{kasconsp}
S.~C. Ferry, A.~Ranicki, and J.~Rosenberg (eds.), \emph{{$K$-theory, group
  $C^{*}$-algebras, and higher signatures}}, pp.~101--146, {Cambridge
  University Press}, 1995.

\bibitem{gelfuks}
I.~Gelfand and D.~Fuks, \emph{{Cohomology of the Lie algebra of formal vector
  fields}}, {Math. USSR-Izv.} \textbf{4} (1970), 327--342.

\bibitem{gv}
C.~Godbillon and J.~Vey, \emph{Un invariant des feuilletages de codimension
  un}, C.R. Acad. Sci. Paris \textbf{273} (1971), 92--95.

\bibitem{goro1}
A.~Gorokhovsky, \emph{{Characters of cycles, equivariant characteristic classes
  and Fredholm modules}}, {Comm. Math. Phys.} \textbf{208} (1999), 1--23.

\bibitem{goro2}
\bysame, \emph{{Secondary characteristic classes and cyclic cohomology of Hopf
  algebras}}, {Topology} \textbf{41} (2002), 993--1016.

\bibitem{hh}
J.~Heitsch and S.~Hurder, \emph{{Secondary classes, Weil measures and the
  geometry of foliations}}, J. Differential Geom. \textbf{20} (1984), 291--309.

\bibitem{helgason}
S.~Helgason, \emph{{Differential geometry and symmetric spaces}}, {Academic
  Press}, 1962.

\bibitem{higsonlocind}
N.~Higson, \emph{The local index formula in noncommutative geometry},
  {Contemporary Developments in Algebraic K-Theory, ICTP Lecture Notes},
  vol.~15, 2003, pp.~444--536.

\bibitem{hurd}
S.~Hurder, \emph{{The Godbillon measure of amenable foliations}}, J.
  Differential Geom. \textbf{23} (1986), 347--365.

\bibitem{hurdkat}
S.~Hurder and A.~Katok, \emph{{Secondary classes and transverse measure theory
  of a foliation}}, {Bull. Amer. Math. Soc.} \textbf{11} (1984), 347--350.

\bibitem{KaLe2}
J.~Kaad and M.~Lesch, \emph{{A local global principle for regular operators in
  Hilbert $C^*$-modules}}, {J. Funct. Anal.} \textbf{262} (2012), 4540--4569.

\bibitem{KNR}
J.~Kaad, R.~Nest, and A.~Rennie, \emph{{$KK$-theory and spectral flow in von
  Neumann algebras}}, J. $K$-theory \textbf{10} (2012), 241--277.

\bibitem{foliatedbundles}
F.~W. Kamber and P.~Tondeur, \emph{{Characteristic invariants of foliated
  bundles}}, {Manuscripta Math.} \textbf{11} (1974), 51--89.

\bibitem{KasparovTech}
G.~G. Kasparov, \emph{{The operator $K$-functor and extensions of
  $C^*$-algebras}}, Math. USSR Izv. \textbf{16} (1981), 513--572.

\bibitem{KasparovEqvar}
\bysame, \emph{{Equivariant KK-theory and the Novikov conjecture}}, {Invent.
  Math.} \textbf{91} (1988), 147--202.

\bibitem{koshskand}
M.~Khoshkam and G.~Skandalis, \emph{{Crossed products of $C^{*}$-algebras by
  groupoids and inverse semigroups}}, {J. Operat. Theor.} \textbf{51} (2004),
  255--279.

\bibitem{kob}
S.~Kobayashi, \emph{{Canonical forms on frame bundles of higher order
  contact}}, {Proc. Sympos. Pure Math.} \textbf{3} (1961), 186--193.

\bibitem{LN}
M.~Laca and S.~Neshveyev, \emph{{KMS states of quasi-free dynamics on Pimsner
  algebras}}, J. Funct. Anal. \textbf{211} (2004), 457--482.

\bibitem{legall}
P.-Y. Le~Gall, \emph{{Théorie de Kasparov équivariante et groupoïdes}},
  Ph.D. thesis, {Universit\'{e} Paris Diderot Paris 7}, 1994.

\bibitem{gcc}
S.~Morita, \emph{{Geometry of Characteristic Classes (Translations of
  Mathematical Monographs)}}, American Mathematical Society, 2001.

\bibitem{mori1}
H.~Moriyoshi and T.~Natsume, \emph{{The Godbillon-Vey cyclic cocycle and
  longitudinal Dirac operators}}, {Pacific J. Math.} \textbf{172} (1996),
  483--539.

\bibitem{mori2}
H.~Moriyoshi and P.~Piazza, \emph{{Eta cocycles, relative pairings, and the
  Godbillon-Vey index theorem}}, {GAFA} \textbf{22} (2012), 1708--1813.

\bibitem{moscorangi}
H.~Moscovici and B.~Rangipour, \emph{Cyclic cohomology of hopf algebras of
  transverse symmetries in codimension 1}, Adv. Math. \textbf{210} (2007),
  323--374.

\bibitem{muhwil}
P.~S. Muhly and D.~Williams, \emph{{Renault's equivalence theorem for groupoid
  crossed products}}, {New York J. Math. Mono.} \textbf{3} (2008), 1--87.

\bibitem{pierrot}
F.~Pierrot, \emph{{Bimodules de Kasparov non bornés équivariants pour les
  groupoïdes topologiques localement compacts}}, {C.R. Acad. Sci. Paris}
  \textbf{342} (2006), 661--663.

\bibitem{localglobal}
\bysame, \emph{{Op{\'e}rateurs r{\'e}guliers dans les $C^*$-modules et
  structure des $C^*$-alg{\`e}bres de groupes de Lie semisimples complexes
  simplement connexes}}, J. Lie Theory \textbf{16} (2006), 651--689.

\bibitem{renault}
J.~Renault, \emph{{A Groupoid Approach to C*-Algebras (Lecture Notes in
  Mathematics)}}, Springer, 2009.

\bibitem{thurston1}
W.~Thurston, \emph{{Noncobordant Foliations of $S^{3}$}}, {Bull. Amer. Math.
  Soc.} \textbf{78} (1972), 511--514.

\bibitem{tu}
J.-L. Tu, \emph{{Non-Hausdorff groupoids, proper actions and $K$-theory}},
  {Doc. Math.} \textbf{9} (2004), 565--597.

\bibitem{wink}
H.~E. Winkelnkemper, \emph{The graph of a foliation}, {Ann. Global Anal. Geom.}
  \textbf{1} (1983), no.~3, 51--75.

\bibitem{zhang}
W.~Zhang, \emph{{Positive scalar curvature on foliations}}, {Ann. Math.}
  \textbf{185} (2017), 1035--1068.

\end{thebibliography}


\begin{thebibliography}{9999}


\bibitem{CM} A. Connes, H. Moscovici, 
{\em The local index formula in noncommutative geometry}, GAFA, {\bf 5} 
(1995), 174--243.

\bibitem{KaLe2} J. Kaad, M. Lesch, \emph{A local-global principle for regular operators on Hilbert modules}, J. Funct. Anal., {\bf 262} no. 10 (2012), 4540--4569.

\bibitem{KNR} J. Kaad, R. Nest, A. Rennie, {\em $KK$-theory and spectral flow in von Neumann algebras},
  J. $K$-Theory, {\bf 10} (no. 2) (2012), 241--277.


\bibitem{KasparovTech}
G. G. Kasparov,
``The operator $K$-functor and extensions of $C^*$-algebras'',
Math. USSR Izv. {\bf 16} (1981), 513--572.

\bibitem{KasparovEqvar}
G. G. Kasparov,
``Equivariant $KK$-theory and the Novikov conjecture'',
Invent. Math. {\bf 91}, 147--201 (1988).


\bibitem{LN} M. Laca, S. Neshveyev, \emph{K{MS} states of quasi-free
    dynamics on {P}imsner algebras}, J. Funct. Anal. \textbf{211} (2004), 457--482.

\bibitem{localglobal} F. Pierrot, {\em Op\'{e}rateurs r\'{e}guliers dans les $C^*$-modules et structure des $C^*$-alg\`{e}bres de groupes de Lie
semisimples complexes simplement connexes}, J. Lie Theory, {\bf 16} (2006), 651--689.

\bibitem{moscorangi} H. Moscovici, B. Rangipour, \emph{Cyclic cohomology of Hopf algebras of transverse symmetries in codimension 1}, Adv. Math., \textbf{210} (2007), 323--374.

\bibitem{hopf1} A. Connes, H. Moscovici, \emph{Hopf algebras, cyclic cohomology and the transverse index theorem}, Comm. Math. Phys., \textbf{198} (1998), 199--246.

\bibitem{bro} J. Brown, \emph{Proper actions of groupoids on $C^{*}$-algebras}, J. Operat. Theor., \textbf{2} (2012).

\bibitem{iakovos} I. Androulidakis, G. Skandalis, \emph{A Baum-Connes conjecture for singular foliations}, arXiv:1509.05862v3 (2015).

\bibitem{muhwil} P. Muhly, D. Williams, \emph{Renault's equivalence theorem for groupoid crossed products}, arXiv:0707.3566v2 (2007).

\bibitem{pierrot} F. Pierrot, \emph{Bimodules de Kasparov non born\'{e}s \'{e}quivariants pour les groupo\"{i}des topologiques localement compacts}, C. R. Acad. Sci. Paris, Ser. \textbf{342} (2006).

\bibitem{legall} P.-Y. Le Gall, \emph{Th\`{e}orie de Kasparov \`{e}quivariante et groupo\"{i}des}, Th\`{e}se de doctorat en Math\'{e}matiques, 1994.

\bibitem{folops} A. Connes, \emph{A survey of foliations and operator algebras}, Operator algebras and applications, Part I, pp. 521 - 628, Proc. Sympos. Pure Math., 38, Amer. Math. Soc., Providence, R.I., 1982.

\bibitem{tu} J.-L. Tu, \emph{Non-Hausdorff groupoids, proper actions and $K$-theory}, Doc. Math., \textbf{9} (2004), 565--597.

\bibitem{cyctrans} A. Connes, \emph{Cyclic cohomology and the transverse fundamental class of a foliation}, Geometric methods in operator algebras (Kyoto, 1983), pp. 52--144, Pitman Res. Notes in Math., 123, Longman, Harlow, 1986.

\bibitem{renault} J. Renault, \emph{A groupoid approach to $C^{*}$-algebras}, Lecture Notes in Math., 793, Springer, Berlin, 1980.

\bibitem{cprs4} A. Carey, J. Phillips, A. Rennie, F. Sukochev, \emph{The Chern character of semifinite spectral triples}, J. Noncommut. Geom., \textbf{2} (2008), 141--193.

\bibitem{cprs1} A. Carey, J. Phillips, A. Rennie, F. Sukochev, \emph{The Hochschild class of the Chern character for semifinite spectral triples}, J. Funct. Anal., \textbf{213} (2004), 111--153.

\bibitem{cprs2} A. Carey, J. Phillops, A. Rennie, F. Sukochev, \emph{The local index formula in semifinite von Neumann algebras I: Spectral flow}, Adv. Math., \textbf{202} (2006), 451--516.

\bibitem{cprs3} A. Carey, J. Phillips, A. Rennie, F. Sukochev, \emph{The local index formula in semifinite von Neumann algebras II: The even case}, Adv. Math., \textbf{202} (2006), 517--554.

\bibitem{benfack} M. T. Benameur, T. Fack, \emph{Type II non-commutative geometry. I. Dixmier trace in von Neumman algebras}, Adv. Math., \textbf{199} (2006), 29--87.

\bibitem{cgrs1} A. Carey, V. Gayral, A. Rennie, F. Sukochev, \emph{Integration on locally compact noncommutative spaces}, J. Funct. Anal., \textbf{263} (2012), 383--414.

\bibitem{cgrs2} A. Carey, V. Gayral, A. Rennie, F. Sukochev, \emph{Index theory for locally compact noncommutative geometries}, Mem. AMS (2014).

\bibitem{higsonlocind} N. Higson, \emph{The local index formula in noncommutative geometry}, Contemporary Developments in Algebraic $K$-Theory, ictp Lecture Notes, \textbf{15} (2003), 444--536.

\bibitem{bram1} B. Mesland, \emph{Unbounded bivariant $K$-theory and correspondences in noncommutative geometry}, J. Reine Angew. Math., \textbf{691} (2014), 101--172.

\bibitem{baajjulg1} S. Baaj, P. Julg, \emph{Theorie bivariante de Kasparov et operateurs non-born\'{e}s dans les $C^{*}$-modules Hilbertiens}, C. R. Acad. Sci. Paris, \textbf{296} (1983), 875--878.

\bibitem{foliatedbundles} F. W. Kamber, P. Tondeur, \emph{Foliated bundles and characteristic classes}, Lecture Notes in Math., 493, Springer, Berlin, 1975.



\end{thebibliography}

\end{document}